\newtheorem{thm}{Theorem}
\newtheorem*{thm*}{Theorem}
\newtheorem{lem}[thm]{Lemma}
\newtheorem{cor}[thm]{Corollary}
\newtheorem{clm}[thm]{Claim}
\newcommand{\llll}[1] {\left #1}
\newcommand{\rrrr}[1] {\right #1}
\newcommand{\wwww}[1]{\widetilde #1}
\newcommand{\oooo}[1]{\overline #1}
\newcommand{\dddd}[2]{\dfrac{#1}{#2}}
\newcommand{\pppp}{\partial}
\newcommand{\aaaa}{\alpha}
\newcommand{\tttt}{\tau}
\newcommand{\bbbb}{\beta}
\begin{document}
\nocite{*}
\title{{\bf \normalsize NUMERICAL APPROXIMATIONS FOR FRACTIONAL DIFFERENTIAL EQUATIONS}}
\author{Yuri Dimitrov\\
Department of Applied Mathematics and Statistics \\
University of Rousse  \\
8 Studentska str.\\
Rousse  7017, Bulgaria\\
\texttt{ymdimitrov@uni-ruse.bg}}
\maketitle
\begin{abstract}
The Gr\"unwald and shifted Gr\"unwald formulas for the  function $y(x)-y(b)$ are first order approximations  for the Caputo fractional derivative of the function $y(x)$ with lower limit at the point $b$.  We obtain  second and third order approximations for the Gr\"unwald and shifted Gr\"unwald formulas with weighted averages of Caputo derivatives  when  sufficient number of derivatives of the function $y(x) $ are equal to zero at  $b$, using  the  estimate for the error of the shifted Gr\"unwald formulas.  We use the approximations  to determine implicit difference   approximations for the  sub-diffusion equation which have second order accuracy  with respect to the  space and time  variables, and second and  third order numerical approximations for ordinary fractional differential equations.  \\
{\bf 2010 Mathematics Subject Classification:} 26A33, 34A08, 65M12\\
{\bf Key Words and Phrases:} fractional differential equation, implicit difference approximation, Gr\"unwald formula,  stability, convergence
\end{abstract}
\section{Introduction}
\label{Intro} Fractional derivatives are an effective tool for modeling  diffusion processes in complex systems.  Mathematical models with partial fractional differential equations have been used to describe complex processes in physics, biology, chemistry and economics [10-20]. The time fractional diffusion equation  is a parabolic partial fractional differential equation obtained from the heat-diffusion equation by replacing the time derivative with a fractional derivative of order $\alpha$.
\begin{equation}\label{fdeqn}
\dfrac{\partial^\alpha u(x,t)}{\partial t^\alpha}=\dfrac{\partial^2 u(x,t)}{\partial x^2}+G(x,t).
\end{equation}
When $0<\alpha<1$ the equation is called fractional sub-diffusion  equation and it is a model of a slow diffusion process. When the order of the fractional deriative is between one and two the  equation is called fractional super-diffusion  equation. There is a growing need to design efficient algorithms for numerical solution of partial fractional differential equations. Finite-difference approximations for the  heat-diffusion $(\aaaa=2)$ and time-fractional diffusion  equations have been studied [38-50] for their importance in practical applications as well as for evaluation of their performance.  

A common way to approximate the Caputo derivative of order $\aaaa$, when $0<\aaaa<1$, is approximation  \eqref{CaputoApproximation}. The finite difference approximation for the fractional sub-diffusion equation which uses  approximation \eqref{CaputoApproximation} for the time fractional derivative and central difference approximation for the second derivative with respect to $x$   has accuracy $O(\tau^{2-\alpha}+h^2)$, where  $h$ and $\tau$ are the step sizes of the discretizations  with respect to the space and time variables $x$ and $t$.  

Tadjeran et al. \cite{TadjeranMeerschaertScheffer2006} use the estimate for the error for the Gr\"unwald formula \eqref{estimate} to design  an algorithm for a second order  numerical approximation of the solution of the space fractional diffusion equation of order $\aaaa$, when $1<\aaaa<2$.
$$\dddd{\partial u(x,t)}{\partial t}=d(x)\dddd{\partial^\aaaa u(x,t)}{\partial x^\aaaa}+G(x,t).$$
The algorithm uses a Crank-Nicholson approximation with respect to the time variable $t$ and an extrapolation with respect to the space variable $x$. 

Ding and Li \cite{DingLi2013} compute a numerical solution of the fractional diffusion-wave equation with reaction term
$$\dddd{\partial^\aaaa u(x,t)}{\partial t^\aaaa}=K_\aaaa\dddd{\partial^2 u(x,t)}{\partial x^2}-C_\aaaa u(x,t)+G(x,t),$$
 where $K_\aaaa>0$ and $C_\aaaa>0$ are the diffusion and reaction coefficients, using a compact difference approximation with an accuracy $O\llll(\tau^2+h^4\rrrr)$. 

Gorenflo \cite{Gorenflo1996} showed that the shifted Gr\"unwald formula 
$h^{-\aaaa} \Delta_{h,\aaaa/2}^\aaaa y(x)$ is a second order approximation for the fractional derivative  
$$y^{\aaaa}(x)=h^{-\aaaa}\Delta_{h,\aaaa/2}^\aaaa y(x)+O\llll(h^2\rrrr)$$
when the transition of $y(x)$ to zero is sufficiently smooth at the lower limit of fractional differentiation. The smoothness condition requires that $y(b)=0$. This approximation is a special case of Theorem 1(i), when $p=\aaaa/2$.

There is a significant interest in designing efficient numerical solutions for ordinary and partial fractional differential equations, which stems from the possibility to use fractional derivatives to explain complex processes in nature and social sciences and their relation to integer order differential equations. While approximations of fractional derivatives  with accuracy $O\llll( h^{2-\aaaa}\rrrr)$ have been studied extensively, new algorithms with second and higher order accuracy [27-37] have been  proposed and successfully applied for numerical solution of ordinary and partial fractional differential equations.

In this paper we construct a second order implicit difference approximation for the fractional sub-diffusion equation \eqref{fdeqn} and second and third order approximations for the ordinary fractional differential equation
\begin{equation} \label{eq2}
y^{(\aaaa)}(x)+y(x)=f(x)
\end{equation}
when equations  \eqref{fdeqn} and \eqref{eq2} have sufficiently smooth solutions. In section 3 we use the estimate for the error of the Gr\"unwald and shifted Gr\"unwald formulas \eqref{estimate} to obtain second and third order approximations for the Gr\"unwald and shifted Gr\"unwald formulas with weighted averages of Caputo derivatives on a uniform grid, when sufficient number of derivatives of the function $y(x)$ are  equal to zero at the point $b$. In section 4 we derive recurrence relations \eqref{Recurrence3},  \eqref{Recurrence4} and \eqref{3rdOrder} for  second and third  order approximations to the solution of ordinary fractional differential  equation \eqref{eq2} and we determine estimates for the Gr\"unwald weights. In section 5 we use  approximations \eqref{SecondOrder} and \eqref{SecondOrder2} to construct implicit  difference approximations  \eqref{FiniteDifferenceScheme} and \eqref{FiniteDifferenceScheme2} for the solution of the sub-diffusion equation \eqref{fdeqn} and we show that they have second order accuracy $O\llll(\tau^2+h^2\rrrr)$ with respect to the space and time variables.
\section{Preliminaries}
The fractional derivatives are generalizations of the integer order derivatives. Let $y(x)$ be a real-valued function defined for $x\geq b$. The  Riemann-Liouville and Caputo  fractional derivatives   of order $\aaaa$, when $0<\alpha<1$ are defined as
\begin{equation}\label{RLder}
D_{RL}^\alpha y(x)=\dfrac{1}{\Gamma (1-\alpha)}\dfrac{d}{dx}\int_b^x \dfrac{y(\xi)}{(x-\xi)^\alpha}d\xi,
\end{equation}
\begin{equation}\label{Cder}
D_x^\alpha y(x)=y^{(\alpha)}(x)=\dddd{d^\aaaa}{d x^\aaaa}y(x)=\dfrac{1}{\Gamma (1-\alpha)}\int_b^x \dfrac{y'(\xi)}{(x-\xi)^\alpha}d\xi.
\end{equation}
The Caputo fractional derivative of the constant function $1$ is zero and the Riemann-Liouville  derivative of $1$ is $(x-b)^{-\alpha}/\Gamma (1-\alpha)$. The Caputo derivative of the function $y(x)$ satisfies
$$D_x^\alpha y(x)=D_x^\alpha (y(x)-y(b)).$$
If a function $y(x)$ is Caputo differentiable of order $\alpha$ then it is differentiable in the sense of the definition of Riemann-Liouville derivative. The classes Caputo differentiable functions $C^\aaaa$,   when $0<\aaaa<1$, include the $C^1$ functions and are suitable for numerical computations for fractional differentiable equations. (For a strict definition of a fractional derivative we assume Lebesgue integration in the definitions of Caputo and Riemann-Liouville fractional derivatives.) The Caputo and Riemann-Liouville fractional derivatives satisfy \cite[p. 53]{Diethelm2010}
\begin{equation}\label{CaputoRL}
D_{RL}^\alpha y(x)=D_{C}^\alpha y(x)+\dfrac{y(b)}{\Gamma \left(1-\alpha \right) (x-b)^\alpha}.
\end{equation}
The Caputo and Riemann-Liouville fractional derivatives of the function $y(x)$ are equal when $y(b)=0$. 

The Miller-Ross sequential fractional derivative of order $\aaaa_1+\aaaa_2$ for the  Caputo derivative is defined as
$$y^{(\aaaa_1+\aaaa_2)}(x)=D_x^{\aaaa_1}D_x^{\aaaa_2}y(x).$$
In the special cases $\aaaa_1=1,\aaaa_1=2$ and $\aaaa_2=\aaaa$, when $0<\aaaa<1$
$$y^{(1+\aaaa)}(x)=\dfrac{d}{dx}y^{(\aaaa)}(x),\quad y^{(2+\aaaa)}(x)=\dfrac{d^2}{dx^2}y^{(\aaaa)}(x).$$  
The local behavior of a differentiable function is described with its Taylor series and Taylor polynomials. The  properties of a function $y(x)$ close to the lower limit $b$ can be described with its Caputo and Miller-Ross derivatives at the point $b$. The following theorem is a generalization of the Mean-Value Theorem for differentiable functions \cite{TrujilloRiveroBonilla1999}.
\begin{thm*} (Generalized Mean-Value Theorem) Let $y\in C^\aaaa [b,x]$. Then
$$y(x)=y(b)+\dddd{(x-b)^\aaaa}{\Gamma (\aaaa+1)} y^{(\aaaa)}(\xi_x)\quad (b\leq\xi_x\leq x).$$
\end{thm*}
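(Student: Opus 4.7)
The plan is to reduce the statement to the classical first mean value theorem for integrals applied to the Riemann--Liouville fractional integral of $y^{(\aaaa)}$. The key bridge is the fundamental identity $I_b^\aaaa D_b^\aaaa y(x) = y(x) - y(b)$, which holds for $y\in C^\aaaa[b,x]$ and follows directly from the definition of the Caputo derivative: writing
$$I_b^\aaaa D_b^\aaaa y(x) = \dddd{1}{\Gamma(\aaaa)}\int_b^x (x-\xi)^{\aaaa-1}\llll[\dddd{1}{\Gamma(1-\aaaa)}\int_b^\xi \dddd{y'(s)}{(\xi-s)^\aaaa}ds\rrrr]d\xi,$$
one exchanges the order of integration (Fubini is legal because $y'\in L^1$ and the kernel is integrable for $0<\aaaa<1$), evaluates the inner Beta-function integral, and is left with $\int_b^x y'(s)\,ds = y(x)-y(b)$.

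Once this identity is in hand, I would simply write
$$y(x)-y(b) = \dddd{1}{\Gamma(\aaaa)}\int_b^x (x-\xi)^{\aaaa-1}\,y^{(\aaaa)}(\xi)\,d\xi,$$
and observe that the weight $(x-\xi)^{\aaaa-1}$ is non-negative and integrable on $[b,x]$ (since $0<\aaaa<1$). The continuity of $y^{(\aaaa)}$ on $[b,x]$ (which is part of the hypothesis $y\in C^\aaaa[b,x]$) then allows me to invoke the first mean value theorem for Riemann integrals with a positive weight: there exists $\xi_x\in[b,x]$ such that
$$\int_b^x (x-\xi)^{\aaaa-1}\,y^{(\aaaa)}(\xi)\,d\xi = y^{(\aaaa)}(\xi_x)\int_b^x (x-\xi)^{\aaaa-1}d\xi = y^{(\aaaa)}(\xi_x)\,\dddd{(x-b)^\aaaa}{\aaaa}.$$
Dividing by $\Gamma(\aaaa)$ and using $\aaaa\,\Gamma(\aaaa)=\Gamma(\aaaa+1)$ gives the claimed formula.

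The only delicate point is the justification of the inversion identity $I_b^\aaaa D_b^\aaaa y = y-y(b)$ in the precise regularity class $C^\aaaa$ used in the paper, and the verification that the weighted mean value theorem applies to an improper but absolutely convergent integral. Both are standard once one knows $y^{(\aaaa)}\in C[b,x]$; no estimate is genuinely delicate. The bulk of the write-up would thus consist of displaying the two displayed equations above, citing the Fubini step and the integrability of the kernel as the only technical observations.
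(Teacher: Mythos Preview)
The paper does not actually supply a proof of this statement; it merely quotes the Generalized Mean-Value Theorem and attributes it to Trujillo, Rivero and Bonilla \cite{TrujilloRiveroBonilla1999}. So there is no ``paper's own proof'' to compare against.

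Your argument is correct and is essentially the standard one found in that reference: establish the inversion identity $I_b^{\aaaa} y^{(\aaaa)}(x)=y(x)-y(b)$ (which indeed follows from Fubini and the Beta-integral evaluation for $y\in C^{\aaaa}[b,x]$), and then apply the first mean value theorem for integrals with the non-negative integrable weight $(x-\xi)^{\aaaa-1}$, using the assumed continuity of $y^{(\aaaa)}$ on $[b,x]$. The two points you flag as delicate---the Fubini step and the use of the weighted mean value theorem for an improper integral---are handled exactly as you indicate, so nothing is missing.
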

Fractional Taylor series for Caputo and Miller-Ross derivatives \cite{TrujilloRiveroBonilla1999,JosipPecaricPeric2005} can be defined using an approach similar to the derivation of the classical Taylor series which involves only integer order derivatives. While the local properties of a function at the lower limit $b$ can be explained with its fractional derivatives their values at any other point $x$ depend on the values of the function on the interval $[b,x]$. We can observe a similarity between the Taylor series of the function $y(x)$ and relation \eqref{estimate} for its fractional derivatives and the shifted Gr\"unwald formulas.

Two important special functions in fractional calculus are the gamma  and Mittag-Leffler functions. The gamma function has  properties 
$$\Gamma(0)=1, \quad \Gamma (z+1)=z\Gamma (z).$$
When $n$ is a positive integer $\Gamma(n)=(n-1)!$. The one-parameter and two-parameter Mittag-Leffler functions are defined for $\aaaa>0$ as
$$E_\alpha (z)=\sum_{n=0}^\infty \dfrac{z^n}{\Gamma(\alpha n+1)}, \quad
E_{\alpha,\beta} (z)=\sum_{n=0}^\infty \dfrac{z^n}{\Gamma(\alpha n+\beta)}.$$
Some special cases of the one-parameter Mittag-Leffler function
$$E_1(-z)=e^{-z},\quad E_2(-z^2)=\cos z,\quad E_{\frac{1}{2}}(z)=e^{z^2}erfc(-z),$$
where $ercf (z)$ is the complimentary error function 
$$ercf (z)=\dfrac{2}{\sqrt \pi}\int_z^\infty e^{-t^2}dt.$$
The Mittag-Leffler functions appear in the solutions of ordinary and fractional differential equations \cite[chap. 1]{Podlubny1999}. The ordinary differential equation 
$$y''(x)=y(x),y(0)=1,y'(0)=1$$
has solution $y(x)=E_{2,1}(x^2)+x E_{2,2}(x^2)=\cosh x+\sinh x=e^x$. 
The fractional differential equation 
$$y^{(\alpha)}(x)=\lambda y(x), y(0)=1$$
has solution $y(x)=E_\alpha (\lambda x^\alpha)$. We can determine the analytical solutions of linear  ordinary and partial fractional differential equations using integral transforms. The following formulas for the Laplace transform of the derivatives of the Mittag-Leffler functions are often used to determine the analytical solutions of linear fractional differential equations \cite{Podlubny1999}.  
$$\mathcal{L}\{ t^{\aaaa k+\bbbb-1} E^{(k)}_{\aaaa,\bbbb}(\pm a t^\aaaa) \}(s)=
\dddd{k! s^{\aaaa-\bbbb}}{\llll( s^\aaaa \mp a\rrrr)^{k+1}}.$$
 Analytical solutions of ordinary and partial fractional differential equations can be found only for special cases of the equations and the initial and boundary conditions. We can determine numerical approximations for the solutions of a much larger class of  equations   which include nonlinear fractional differential equations. Approximations for the Caputo and Riemann-Liouville derivatives are obtained from the Gr\"unwald-Letnikov fractional derivative  and by approximating the fractional integral in the definition. 

 Let   $x_n=b+nh$  be a uniform grid on the $x$-axis starting from the point $b$, and $y_n=y(x_n)=y(b+n h)$, where $h>0$ is a small number.
The following approximation  of the Caputo derivative is derived from quadrature approximations of the fractional integral for $y'(x)$ in the definition of Caputo fractional derivative \cite{ZhuangLiu2006} on the interval $\llll[b,x_n\rrrr]$, by approximating the integrals on all subintervals of length $h$.
 \begin{equation} \label{CaputoApproximation}
y^{(\alpha)}_n  = \dfrac{1}{h^\alpha}\sum_{k=0}^{n-1} c_k^{(\alpha)} y_{n-k}+O(h^{2-\alpha}),
\end{equation}
where $c_0^{(\alpha)}=1/\Gamma (2-\alpha)$ and
$$c_k^{(\alpha)}=\dddd{(k+1)^{1-\alpha}-2k^{1-\alpha}+(k-1)^{1-\alpha}}{\Gamma(2-\alpha)}.$$
 When the function $y(x)$ has continuous second derivative, approximation \eqref{CaputoApproximation} has accuracy $O(h^{2-\alpha})$. The weights $c_k^{(\alpha)}$  satisfy
\begin{equation*}
c_0^{(\alpha)}>0,\quad c_1^{(\alpha)}<c_2^{(\alpha)}<\cdots<c_k^{(\alpha)}<\cdots<0,\quad \sum_{k= 0}^\infty c_k^{(\alpha)} = 0.
\end{equation*}

The Gr\"unwald-Letnikov  fractional derivative is closely related to  Caputo and  Riemann-Liouville derivatives
$$D_{GL}^\alpha y(x) = \lim_{\Delta x\downarrow 0} \dfrac{1}{\Delta x^\alpha} \sum_{n=0}^{\left[\frac{x-b}{\Delta x}\right]} (-1)^n\binom{\alpha}{n} y(x-n \Delta x).$$
The two most often used values of the lower limit of the fractional differentiation $b$ are zero and $-\infty$.
When the lower limit $b=-\infty$ the upper limit of the sum in the definition of Gr\"unwald-Letnikov derivative is $\infty$. The fractional binomial coefficients are defined  similarly to the integer binomial coefficients with the gamma function
$$\binom{\alpha}{n}=\dfrac{\Gamma(\alpha+1)}{\Gamma(n+1)\Gamma(\alpha-n+1)}=\dfrac{\alpha(\alpha-1)\cdots(\alpha-n+1)}{n!}.$$
The Riemann-Liouville, Caputo and Gr\"unwald-Letnikov  derivatives are equal when $y(b)=0$ and $y\in C^1[b,x]$  (\cite[p. 43]{Diethelm2010}). Let's denote by  $\Delta_h^\alpha y(x)$ and $\Delta_{h,p}^\alpha y(x)$ the {\it Gr\"unwald difference operator} and {\it shifted Gr\"unwald  difference operators} for the function $y(x)$.
\begin{equation*}\label{GrunwaldDO}
\Delta_h^\alpha y(x)= \sum_{n=0}^{N_{x,h}} (-1)^n\binom{\alpha}{n} y(x-n h),
\end{equation*}
$$\Delta_{h,p}^\alpha y(x)= \sum_{n=0}^{N_{x,h}} (-1)^n\binom{\alpha}{n} y(x-(n-p) h).$$
where $N_{x,h}=\left[\frac{x-b}{h}\right]$, and $h>0$ is a small number.
 The Gr\"unwald operator is a special case of the shifted Gr\"unwald operator when the shift value $p=0$.    When $y(b)=0$, we derive approximations for the Riemann-Liouville  and Caputo derivatives from the definition of Gr\"unwald-Letnikov derivative. 
\begin{equation*}
D_{x}^\alpha y(x)=D_{RL}^\alpha y(x)=D_{GL}^\alpha y(x)\approx h^{-\aaaa} \Delta_h^\aaaa y(x)\approx h^{-\aaaa} \Delta_{h,p}^\aaaa y(x).
\end{equation*}
 We  will call the approximations $h^{-\aaaa} \Delta_h^\aaaa y(x)$ and $h^{-\aaaa} \Delta_{h,p}^\aaaa y(x)$ of the fractional derivative of order $\aaaa$ -  {\it Gr\"unwald formula} and  {\it shifted Gr\"unwald formulas} of the function $y(x)$.   Let $ w_n^{(\alpha)}$ be the weights of the Gr\"unwald formulas.
$$ w_n^{(\alpha)}=(-1)^n\binom{\alpha}{n}.$$
When  $y(x)$ is a continuously-differentiable function  the  shifted Gr\"unwald formulas for the function $y(x)-y(b)$ are first-order approximations for the Caputo derivative of $y(x)$.  
\begin{equation}
y^{(\alpha)}(x)=\dddd{1}{h^\aaaa}\sum_{n=0}^{N_{x,h}} w_n^{(\alpha)} (y(x-(n-p)h)-y(b))+O\left(h\right).
\label{ShiftedGrunwaldApproximation}
\end{equation}
In Theorem 1(i)  we show that the shifted Gr\"unwald formulas of the function $y(x)-y(b)$ are second-order approximations for the Caputo fractional derivative at the point $x+\left(p-\alpha / 2\right)h$, when the function $y(x)$ is sufficiently differentiable on the interval $[b,x]$ and the values of its first and second derivatives are equal to zero at the point $b$.
\begin{equation}\label{2ndOrdApp}
y^{(\alpha )} \left(x+\left(p-\dfrac{\alpha}{2}\right) h\right)=
\dddd{1}{h^\aaaa}\sum_{n=0}^{N_{x,h}} w_n^{(\alpha)} (y(x-(n-p)h)-y(b))+O\left(h^2\right).
\end{equation}
 We derive the above formula in Theorem 1(i) from relation \eqref{estimate} for   the shifted Gr\"unwald formulas and the fractional derivatives of the function $y(x)$.  When the conditions of Theorem 1 are satisfied the Riemann-Liouville and Caputo derivatives are equal. The values of a function and the shifted Gr\"unwald formulas satisfy
$$\beta_1  y(x_1)+\beta_2 y(x_2)=  y(\beta_1 x_1+\beta_2 x_2)+O\left(h^2\right),$$
$$\beta_1 \Delta_{h,p}^\alpha y(x_1)+\beta_2 \Delta_{h,q}^\alpha y(x_2)=
\Delta_{h,\beta_1 p+\beta_2 q}^\alpha y(\beta_1 x_1+\beta_2 x_2)+O\left(h^2\right).$$
when $\beta_1+\beta_2=1$ and $y(x)$ is a sufficiently smooth function.
 An alternative way to obain \eqref{2ndOrdApp} is to apply the approximation for  average value of shifted Grunwald formulas to approximation (2.15) in Tian et al. \cite{TianZhouDeng2012}.
An important special case of the above formula is when $y(b)=0$ and $p=0$.
\begin{equation} \label{SecondOrder}
y^{(\alpha )} \left(x-\dfrac{\alpha  h}{2}\right)=\dddd{1}{h^\aaaa}\sum_{n=0}^{N_{x,h}} w_n^{(\alpha)} y(x-n h)+O\left(h^2\right).
\end{equation}
This approximation is closely related to formula (2.9) in \cite{Gorenflo1996} and is suitable for constructing second order weighted numerical approximations for fractional differential equations on a uniform grid. As a direct consequence of \eqref{SecondOrder} we obtain a second order approximation for the Gr\"unwald formula using average values of Caputo derivatives on consecutive nodes of a uniform grid.
\begin{equation} \label{SecondOrder2}
\dddd{1}{h^\aaaa}\sum_{n=0}^{N_{x,h}} w_n^{(\alpha)} y(x-n h)=\llll(\dfrac{\aaaa}{2}  \rrrr)y^{(\alpha )}_{n-1}+
\llll(1-\dfrac{\aaaa}{2} \rrrr)y^{(\alpha )}_{n}+O\left(h^2\right).
\end{equation}
In Corollary 5 we determine a third order approximation for the Gr\"unwald formula using a weighted average of three consecutive  values of the Caputo derivative on a uniform grid, for sufficiently differentiable functions $y(x)$ which satisfy the conditions of Theorem 1(ii). 
\begin{align}  \nonumber \label{ThirdOrder}
\dddd{1}{h^\aaaa}\sum_{n=0}^{N_{x,h}} w_n^{(\alpha)} y(x-n h)=&\llll(\dddd{a^2}{8}-\dddd{5a}{24}\rrrr)y^{(\aaaa)}_{n-2}+\llll(\dddd{11a}{12}-\dddd{a^2}{4}\rrrr)y^{(\aaaa)}_{n-1}+\\
&\llll(1-\dddd{17a}{24}+\dddd{a^2}{8}\rrrr)y^{(\aaaa)}_{n}+O\llll( h^3\rrrr).
\end{align}
Approximations \eqref{SecondOrder},\eqref{SecondOrder2} and \eqref{ThirdOrder} are suitable for numerical computations for fractional differential equations on a uniform grid, when the solutions are sufficiently differentiable functions. In section 4 we determine second and third order approximations for ordinary differential equation \eqref{eq2}, and in section 5 we construct stable difference approximations for the fractional sub-diffusion equation which have second order accuracy with respect to the space and time variables. 
 The  Gr\"unwald weights $ w_n^{(\alpha)}$ are computed recursively with $ w_0^{(\alpha)}=1,  w_1^{(\alpha)}=-\alpha$ and  
$$ w_n^{(\alpha)}=\left( 1-\dfrac{\alpha+1}{n}\right)w_{n-1}^{(\alpha)}.$$
The numbers $ w_n^{(\alpha)}$ are the coefficients of the binomial series
$$(1-z)^\alpha=\sum_{n=0}^\infty  w_n^{(\alpha)} z^n.$$
 When $\alpha$ is a positive integer the sum is finite, and the binomial 
series converges  at the point $z=1$ when $0<\alpha <1$. 
The weights $ w_n^{(\alpha)}$ have the following properties.
\begin{equation}\label{GrunwaldProperties}
w_0^{(\alpha)}>0,\quad  w_1^{(\alpha)}<w_2^{(\alpha)}<\cdots<w_n^{(\alpha)}<\cdots<0,\quad \sum_{n=0}^\infty w_n^{(\alpha)}=0.
\end{equation}
When the lower limit of fractional differentiation  $b\neq \infty$, the upper limit of the sum  is finite   and $\sum_{n=0}^{N_{x,h}} w_n^{(\alpha)}>0$.
The shifted Gr\"unwald formulas for $y(x)$ are first order approximations for the Riemann-Liouville derivative of the function $y(x)$. The approximation error  can be represented as a sum of higher order Riemann-Liouville fractional derivatives \eqref{estimate}. This  estimate is obtained in Tadjeran et al. \cite{TadjeranMeerschaertScheffer2006} when the order $\aaaa$ of the Riemann-Liouville derivative is between one and two using Fourier transform of the shifted Gr\"unwald formulas. The estimate for the error of the shifted Gr\"unwald formulas is generalized in Hejazi et al. \cite{HejaziMoroneyLiu2014} for arbitrary positive $\aaaa$ and $p$.
\begin{thm*} Let $\alpha$ and $p$ be positive numbers,  and suppose that $y\in C^{[\alpha]+n+2}(\mathbb{R})$ and all derivatives of $y$ up to order $[\alpha]+n+2$ belong to $L^1(\mathbb{R})$. Then if $b=-\infty$, there exist constants $c_l$ independent of $h,y,x$ such that
\begin{equation}\label{estimate}
h^{-\alpha}\Delta_{h,p}^\alpha y(x)=D_{RL}^\aaaa y(x)+\sum_{l=1}^{n-1}c_l h^l D_{RL}^{\aaaa+l}y(x)+O(h^n).  
\end{equation}
\end{thm*}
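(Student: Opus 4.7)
The plan is to follow the Fourier-analytic strategy of Tadjeran--Meerschaert--Scheffer, extended so that it covers arbitrary positive $\alpha$ and $p$. First I would take the Fourier transform of the shifted Gr\"unwald operator. Using the translation rule $\mathcal{F}\{y(\cdot-(n-p)h)\}(\omega)=e^{-i\omega(n-p)h}\hat{y}(\omega)$ and the binomial identity $\sum_{n\geq0}(-1)^n\binom{\alpha}{n}z^n=(1-z)^\alpha$ at $z=e^{-i\omega h}$, I obtain
$$\mathcal{F}\bigl\{h^{-\alpha}\Delta_{h,p}^\alpha y\bigr\}(\omega)=W_p(i\omega h)\,(i\omega)^\alpha\,\hat{y}(\omega),\qquad W_p(z):=e^{pz}\left(\dfrac{1-e^{-z}}{z}\right)^{\!\alpha}.$$
Since $\mathcal{F}\{D_{RL}^{\alpha+l}y\}(\omega)=(i\omega)^{\alpha+l}\hat{y}(\omega)$ under the stated smoothness and $L^1$ assumptions, the whole identity reduces on the Fourier side to a statement about the multiplier $W_p$.

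Next I would define the constants. The factor $(1-e^{-z})/z$ equals $1+O(z)$ at the origin, so $W_p$ is holomorphic in a neighbourhood of $0$ with $W_p(0)=1$; I would set
$$W_p(z)=\sum_{l=0}^{\infty}c_l z^l,\qquad c_0=1,$$
which defines the constants $c_l$ (depending only on $\alpha$ and $p$, not on $h$, $y$ or $x$). Writing $E_n(z):=W_p(z)-\sum_{l=0}^{n-1}c_l z^l$ and applying the inverse Fourier transform, the error in the theorem becomes
$$R_n(x)=\dfrac{1}{2\pi}\int_{\mathbb{R}}e^{i\omega x}E_n(i\omega h)\,(i\omega)^\alpha\,\hat{y}(\omega)\,d\omega.$$
So the whole problem is reduced to showing $\|R_n\|_\infty=O(h^n)$.

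To bound $R_n$ I would split the frequency integral at $|\omega h|=1$. On the low-frequency part $|\omega h|\leq 1$, holomorphy of $W_p$ at $0$ gives the Taylor estimate $|E_n(i\omega h)|\leq C|\omega h|^n$, contributing $Ch^n\int|\omega|^{\alpha+n}|\hat{y}(\omega)|\,d\omega$. On the high-frequency part $|\omega h|>1$, I would use that $|(1-e^{-i\theta})/(i\theta)|\leq 1$ on the imaginary axis, so $|W_p(i\omega h)|\leq 1$ and thus $|E_n(i\omega h)|\leq 1+C|\omega h|^{n-1}\leq C'|\omega h|^n$ (since $|\omega h|>1$), which again contributes $Ch^n\int|\omega|^{\alpha+n}|\hat{y}(\omega)|\,d\omega$. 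Combining the two, $|R_n(x)|\leq Ch^n\int_{\mathbb{R}}|\omega|^{\alpha+n}|\hat{y}(\omega)|\,d\omega$, and this integral is finite because the hypothesis $y\in C^{[\alpha]+n+2}(\mathbb{R})$ with all derivatives up to order $[\alpha]+n+2$ in $L^1(\mathbb{R})$ forces $\hat{y}(\omega)$ to decay faster than $|\omega|^{-([\alpha]+n+2)}$ at infinity, while $|\omega|^{\alpha+n}$ is locally integrable near $0$.

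The main obstacle is the high-frequency estimate. The bound $|W_p(i\omega h)|\leq 1$ is delicate: it uses that $W_p$ on the imaginary axis factors as $e^{i\omega ph}$ times a power of the sinc-like quantity $(1-e^{-i\omega h})/(i\omega h)$, which has modulus at most one. One must also choose the branch of $(\cdot)^\alpha$ consistently so that $(i\omega)^\alpha$ really is the Fourier multiplier of $D_{RL}^\alpha$, and verify that this choice makes the coefficients $c_l$ well-defined (the standard principal branch suffices since the argument of $(1-e^{-i\omega h})/(i\omega h)$ stays in $(-\pi/2,\pi/2)$ for all real $\omega$). Once these points are in place, the two regional bounds combine cleanly and the theorem follows.
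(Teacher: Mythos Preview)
The paper does not give its own proof of this theorem: it is quoted as a background result, with the paper explicitly attributing the Fourier-transform argument to Tadjeran--Meerschaert--Scheffer (for $1<\alpha<2$) and its extension to general $\alpha,p$ to Hejazi--Moroney--Liu. Your proposal reconstructs precisely that Fourier-analytic proof, so there is nothing in the paper to compare against beyond noting that you are following the very references the paper cites.

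Your outline is essentially correct and complete. One small inaccuracy: your claim that the argument of $(1-e^{-i\theta})/(i\theta)$ remains in $(-\pi/2,\pi/2)$ for all real $\theta$ is false. Writing $(1-e^{-i\theta})/(i\theta)=\dfrac{\sin(\theta/2)}{\theta/2}\,e^{-i\theta/2}$, the sinc factor changes sign at $\theta=2\pi,4\pi,\dots$, and the phase $-\theta/2$ is unbounded. This does not damage the proof, however, because the high-frequency step only uses the modulus bound $\bigl|(1-e^{-i\theta})/(i\theta)\bigr|=\bigl|\sin(\theta/2)/(\theta/2)\bigr|\le 1$, which is branch-independent, and the low-frequency Taylor expansion takes place in a neighbourhood of $0$ where $W_p$ is single-valued and analytic. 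The correct way to phrase the branch consistency is that the Fourier multiplier of $h^{-\alpha}\Delta_{h,p}^\alpha$ is defined directly by the (convergent) series $h^{-\alpha}e^{ip\omega h}\sum_n w_n^{(\alpha)}e^{-in\omega h}$, and its ratio with the Riemann--Liouville symbol $(i\omega)^\alpha$ agrees with the analytic function $W_p$ near $\omega h=0$; globally one works with this ratio rather than with a branch of $W_p$ chosen a priori.
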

The numbers $c_l$ are the coefficients of the series expansion of the function
$$\omega_{\aaaa,p}(z)=\llll( \dfrac{1-e^{-z}}{z}\rrrr)^\aaaa e^{pz}.$$
If a function $y(x)$ is defined  on a finite interval $\mathcal{I}$ we can extend it to the real line by setting $y(x)=0$ when  $x\notin \mathcal{I}$. In this way the Caputo derivative of the extended function is equal to zero when $x\notin \mathcal{I}$. In the next section we use \eqref{estimate} to detrmine second and third order approximations for the Gr\"unwald and shifted Gr\"unwald formulas with weighted averages of  Caputo derivatives on consecutive points of a uniform grid. 
\section{Approximations for Gr\"unwald and shifted Gr\"unwald formulas}
The definitions of Riemann-Liouville and Caputo  derivatives are the two most commonly used definitions for fractional derivatives.  
  The  Caputo and Riemann-Liouville  fractional derivatives of order $n+\alpha$, where $n$ is a positive integer and $0<\aaaa<1$ are defined as
$$D_x^{n+\alpha} y(x)=\dfrac{1}{\Gamma (1-\alpha)}\int_b^x \dfrac{y^{(n+1)}(\xi)}{(x-\xi)^\alpha}d\xi,$$
$$D_{RL}^{n+\alpha} y(x)=\dfrac{1}{\Gamma (1-\alpha)}\dddd{d^{n+1}}{dx^{n+1}
}\int_b^x \dfrac{y(\xi)}{(x-\xi)^\alpha}d\xi.$$
When $y(x)$ is sufficiently differentiable function on the interval $[b,x]$, the Riemann-Liouville and Caputo derivatives are related as \cite[p. 53]{Diethelm2010}
\begin{equation}\label{RL_C}
D_{RL}^{n+\aaaa} y(x)=D_{x}^{n+\aaaa} y(x)+
\sum_{k=0}^n \dfrac{y^{(k)}(b)}{\Gamma (k-\aaaa-1)}(x-b)^{k-\aaaa-n}.
\end{equation}
We can compute the value of the Riemann-Liouville derivative of the function $y(x)$ from the value of the Caputo derivative and  its integer order derivatives at the lower limit $b$. One disadvantage of of the Riemann-Liouville derivative is that it has a singularity at the  lower limit $b$. When $\aaaa>1$ the singularity is non-integrable.  The class of Caputo differentiable functions of order $n+\aaaa$ includes the functions with $n+1$ continuous derivatives. In section 2 we discussed properties of the Caputo derivatives at the lower limit $b$, which are related to the properties of the integer order derivatives.  
We often prefer to study fractional differential equations with the Caputo derivative because its properties make it an attractive fractional derivative for numerical solution of fractional differential equations.
 Let $h$ be the step size of a uniform grid on the $x$-axis staring from the lower limit of fractional differentiation $b$ and
$$x_n=b+nh,\quad y_n=y(x_n)=y(b+n h).$$
In Theorem 1 we use the estimate for the error of the shifted Gr\"unwald formulas \eqref{estimate} and relation   \eqref{RL_C} for the Riemann-Liouville and Caputo derivatives   to obtain second and third order approximations for the the shifted Gr\"unwald formulas using fractional order Caputo and Miller-Ross derivatives. 
\begin{thm} (Approximations for the shifted Gr\"unwald formulas)

 (i) Let $y(b)=y'(b)=y''(b)=0$ and $y\in C^{4}[b,x]$. Then
\begin{equation}\label{t1_1}
h^{-\alpha}\Delta_{h,p}^\alpha y(x)=y^{(\alpha )} \left(x+\left(p-\dfrac{\alpha}{2}\right) h\right)+O\left(h^2\right);
\end{equation}

 (ii) Let $y(0)=y'(0)=y''(0)=y'''(b)=0$ and $y\in C^{5}[b,x]$. Then
$$h^{-\alpha}\Delta_{h,p}^\alpha y(x)=y^{(\alpha )} \left(x+\left(p-\dfrac{\alpha}{2}\right) h\right)+\dddd{\aaaa}{24}h^2y^{(2+\aaaa)}(x)+O\left(h^3\right).$$
\end{thm}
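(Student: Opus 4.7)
The plan is to combine three ingredients: the power series expansion of $\omega_{\alpha,p}(z)$ near $z=0$ (which supplies the coefficients $c_l$ in \eqref{estimate}), the relation \eqref{RL_C} between Riemann-Liouville and Caputo derivatives (which collapses to an equality under the stated vanishing conditions at $b$), and a classical Taylor expansion of $y^{(\alpha)}$ about $x$. Extension by zero outside $[b,x]$ is what makes \eqref{estimate} available; the hypotheses $y(b)=y'(b)=y''(b)=0$ in (i) and additionally $y'''(b)=0$ in (ii), together with $y\in C^{4}$ resp.\ $C^{5}$, provide the smoothness of the zero-extension needed both to apply \eqref{estimate} and to make the higher-order Riemann-Liouville derivatives equal to the Caputo/Miller-Ross derivatives through \eqref{RL_C}.

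First I would compute $c_0,c_1,c_2$ explicitly. Writing
$$\omega_{\alpha,p}(z)=\exp\llll(\alpha\log\dddd{1-e^{-z}}{z}+pz\rrrr)$$
and using $\log\bigl((1-e^{-z})/z\bigr)=-z/2+z^{2}/24+O(z^{3})$, one gets
$$c_{0}=1,\quad c_{1}=p-\dddd{\aaaa}{2},\quad c_{2}=\dddd{1}{2}\llll(p-\dddd{\aaaa}{2}\rrrr)^{2}+\dddd{\aaaa}{24}.$$
This short computation is the only place where honest algebra is required; once $c_{1}$ and $c_{2}$ are in hand, both parts of the theorem assemble mechanically.

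For part (i), I would apply \eqref{estimate} with $n=2$ to obtain
$$h^{-\aaaa}\Delta_{h,p}^{\aaaa}y(x)=D_{RL}^{\aaaa}y(x)+c_{1}h\,D_{RL}^{\aaaa+1}y(x)+O(h^{2}).$$
The hypothesis $y(b)=y'(b)=y''(b)=0$ together with \eqref{RL_C} gives $D_{RL}^{\aaaa}y(x)=y^{(\aaaa)}(x)$ and $D_{RL}^{\aaaa+1}y(x)=y^{(1+\aaaa)}(x)$, while the $C^{4}$ regularity bounds the $D_{RL}^{\aaaa+2}y$ term hidden in the remainder. A first-order Taylor expansion of the Caputo derivative
$$y^{(\aaaa)}\llll(x+(p-\aaaa/2)h\rrrr)=y^{(\aaaa)}(x)+(p-\aaaa/2)h\,y^{(1+\aaaa)}(x)+O(h^{2})$$
then matches the right-hand side term by term, producing \eqref{t1_1}.

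For part (ii) the same procedure is pushed one order further: apply \eqref{estimate} with $n=3$ under the stronger vanishing hypothesis (which, via \eqref{RL_C}, identifies each Riemann-Liouville derivative with the corresponding Miller-Ross derivative and guarantees that the $D_{RL}^{\aaaa+3}y$ term in the remainder is controlled by the $C^{5}$ bound). Taylor-expand $y^{(\aaaa)}(x+(p-\aaaa/2)h)$ to second order, subtract from the $c_{0}+c_{1}h+c_{2}h^{2}$ expansion of $h^{-\aaaa}\Delta_{h,p}^{\aaaa}y(x)$, and note that the $\tfrac{1}{2}(p-\aaaa/2)^{2}$ piece of $c_{2}$ exactly cancels the quadratic Taylor term, leaving precisely the residual $(\aaaa/24)h^{2}y^{(2+\aaaa)}(x)$. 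The main subtlety is bookkeeping: one must check that the vanishing conditions imposed are exactly enough (no more, no less) to both equate each Riemann-Liouville derivative of order up to $\aaaa+2$ (resp.\ $\aaaa+3$) with its Caputo counterpart and to validate the error estimate \eqref{estimate} for the zero-extended $y$.
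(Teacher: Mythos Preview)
Your proposal is correct and follows essentially the same route as the paper: compute $c_0,c_1,c_2$ from the expansion of $\omega_{\alpha,p}$, apply \eqref{estimate} with $n=2$ (resp.\ $n=3$), use the vanishing conditions at $b$ to replace Riemann--Liouville derivatives by Caputo/Miller--Ross derivatives, and Taylor-expand $y^{(\alpha)}$ about $x$ to absorb the $c_1 h$ and $\tfrac12(p-\alpha/2)^2 h^2$ terms into the shifted argument. The paper additionally spells out why the remainder term is controlled by checking that the relevant higher-order Riemann--Liouville derivatives lie in $L^1[b,x]$ under the vanishing hypotheses, which is precisely what your remark about the $C^4$/$C^5$ regularity and the zero-extension is pointing at.
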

\begin{proof} The function $\omega_{\aaaa,p}(z)$ has power series expansion
$$\omega_{\aaaa,p}(z)=\llll( \dfrac{1-e^{-z}}{z}\rrrr)^\aaaa e^{pz}=c_0+c_1z+c_2z^2+\cdots,$$
where $c_0=1, c_1=p-\aaaa/2$ and \cite{ChenDeng2013_2}
$$c_2=\dddd{1}{24} (12p^2-12\aaaa p+\aaaa+3\aaaa^2)=\dddd{\aaaa}{24}+
\dddd{1}{2}\llll(p^2-\aaaa p+\dddd{\aaaa^2}{4}\rrrr)=\dddd{\aaaa}{24}+\dddd{1}{2}\llll(p-\dddd{\aaaa}{2}\rrrr)^2.$$
From \eqref{estimate} and $n=2$ we obtain
$$h^{-\alpha}\Delta_{h,p}^\alpha y(x)= D_{RL}^\aaaa y(x)+\left( p-\dfrac{\alpha}{2}\right)h D_{RL}^{\alpha+1} y(x)+O\left(h^2\right).  $$
Let $0<\bbbb<1$. The Caputo and Riemann-Liouville fractional derivatives of order $3+\bbbb$ satisfy \eqref{RL_C} with $n=3$. The Riemann-Liouville  derivative $D_{RL}^{3+ \bbbb} y(x)$  has non-integrable singularities of orders $1+\bbbb,2+\bbbb$ and $3+\bbbb$ at the lower limit $b$, with coefficients $y(b),y'(b)$ and $y''(b)$. The Caputo derivative of $y(x)$ of order $3+\bbbb$ and the function $(x-b)^{-\bbbb}$  are integrable on a finite interval. Therefore  $D_{RL}^{3+\bbbb} y(x)\in L^1[b,x]$ when $y(b)=y'(b)=y''(b)=0$. Similarly,  $D_{RL}^{4+\bbbb} y(x)\in L^1[b,x]$ when $y(b)=y'(b)=y''(b)=y'''(0)=0$. 

The Riemann-Liouville and Caputo derivatives of order $\aaaa$ for the functions $y(x)$ which satisfy the conditions of Theorem 1 are equal 
$$D_{RL}^\aaaa y(x)=y^{(\aaaa)}(x)$$
and we can represent $D_{RL}^{\alpha+1} y(x)$ with the Caputo derivative $y^{(\aaaa)}(x)$ as
$$D_{RL}^{\alpha+1} y(x)=\dfrac{1}{\Gamma (1-\alpha)}\dddd{d^2}{dx^2}\int_b^x \dfrac{y(\xi)}{(x-\xi)^\alpha}d\xi=\dddd{d}{dx}D_{RL}^{\alpha} y(x)=
\dddd{d}{dx}y^{(\aaaa)}(x).$$
We obtain the following relation  for the shifted Gr\"unwald formulas and the Caputo derivative of the function $y(x)$.
$$h^{-\alpha}\Delta_{h,p}^\alpha y(x)= y^{(\alpha )}(x)+\left( p-\dfrac{\alpha}{2}\right)h \dddd{d}{dx}y^{(\aaaa)}(x)+O\left(h^2\right).  $$
From the mean value theorem for the function $y^{(\alpha )}(x)$ we have that
$$y^{(\alpha )}\left(x+ \left( p-\dfrac{\alpha}{2}\right)h\right)=y^{(\alpha )}(x)+\left( p-\dfrac{\alpha}{2}\right)h \dfrac{d}{dx}y^{(\alpha )}(x)+O\left(h^2\right).$$
Therefore
$$y^{(\alpha )}\left(x+ \left( p-\dfrac{\alpha}{2}\right)h\right)=h^{-\alpha}\Delta_{h,p}^\alpha y(x)+O\left(h^2\right).$$
Now we prove (ii). From \eqref{estimate} and $n=3$ we obtain
\begin{equation*}
\begin{aligned}
h^{-\alpha}\Delta_{h,p}^\alpha y(x)= D_{RL}^\aaaa y(x)+&\left( p-\dfrac{\alpha}{2}\right)h D_{RL}^{\alpha+1} y(x)+\\
&\llll(\dddd{\aaaa}{24}+\dddd{1}{2}\llll(p-\dddd{\aaaa}{2}\rrrr)^2\rrrr)h^2 D_{RL}^{2+\alpha} y(x)+O\left(h^3\right). 
\end{aligned}
\end{equation*}
We have that
$$D_{RL}^{2+\alpha} y(x)=\dfrac{1}{\Gamma (1-\alpha)}\dddd{d^3}{dx^3}\int_b^x \dfrac{y(\xi)}{(x-\xi)^\alpha}d\xi=\dddd{d^2}{dx^2}D_{RL}^{\alpha} y(x)=
\dddd{d^2}{dx^2}y^{(\aaaa)}(x).$$
Then
\begin{align*}
h^{-\alpha}\Delta_{h,p}^\alpha y(x)=&y^{(\aaaa)}(x)+\left( p-\dfrac{\alpha}{2}\right)h \dddd{d}{dx}y^{(\aaaa)}(x)+\\
&\dddd{1}{2}\llll(p-\dddd{\aaaa}{2}\rrrr)^2h^2 \dddd{d^2}{d x^2}y^{(\aaaa)}(x)+\dddd{\aaaa}{24}h^2\dddd{d^2}{d x^2}y^{(\aaaa)}(x)+O\left(h^3\right). 
\end{align*}
From the Mean-Value Theorem for $y^{(\aaaa)}(x)$ we obtain
$$h^{-\alpha}\Delta_{h,p}^\alpha y(x)=y^{(\alpha )}\left(x+ \left( p-\dfrac{\alpha}{2}\right)h\right)+\dddd{\aaaa}{24}h^2y^{(2+\alpha )}+O\left(h^3\right).$$
\end{proof}
 The most useful special case of \eqref{t1_1} is when $p=0$.
\begin{cor} Let $y(b)=y'(b)=y''(b)=0$ and $y\in C^{4}[b,x]$. Then
\begin{equation} \label{c2_1}
h^{-\alpha}\Delta_{h}^\alpha y(x)=y^{(\alpha )} \left( x-\dfrac{\alpha h}{2} \right)+O\left(h^2\right).
\end{equation}
\end{cor}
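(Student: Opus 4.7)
The plan is to deduce the corollary as the $p=0$ specialization of Theorem 1(i). By the definitions recalled in Section 2, the unshifted Gr\"unwald operator is the shifted Gr\"unwald operator at shift value zero, so $\Delta_h^\aaaa y(x)=\Delta_{h,0}^\aaaa y(x)$. Consequently the left-hand side of the corollary coincides with $h^{-\aaaa}\Delta_{h,0}^\aaaa y(x)$, which is precisely the quantity that Theorem 1(i) estimates.

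The hypotheses of the corollary, namely $y(b)=y'(b)=y''(b)=0$ and $y\in C^{4}[b,x]$, match those of Theorem 1(i) verbatim, so the theorem applies without adjustment. Substituting $p=0$ into the expansion point yields
$$x+\llll(p-\dfrac{\aaaa}{2}\rrrr)h=x-\dfrac{\aaaa h}{2},$$
and the $O(h^2)$ remainder is inherited directly. Chaining these observations gives
$$h^{-\aaaa}\Delta_{h}^\aaaa y(x)=h^{-\aaaa}\Delta_{h,0}^\aaaa y(x)=y^{(\aaaa)}\llll(x-\dfrac{\aaaa h}{2}\rrrr)+O(h^2),$$
which is the desired identity.

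There is no substantive obstacle: the only thing to confirm is that $p=0$ is an admissible shift in Theorem 1(i). The theorem is stated for arbitrary $p$, and the series expansion of $\omega_{\aaaa,p}(z)$ invoked in its proof behaves regularly at $p=0$, so no degeneracy arises. The argument is therefore a direct specialization requiring no new computation.
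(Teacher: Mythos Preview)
Your proposal is correct and matches the paper's approach exactly: the corollary is stated immediately after the remark that the most useful special case of \eqref{t1_1} is $p=0$, and no separate proof is given. Your observation that $\Delta_h^\aaaa=\Delta_{h,0}^\aaaa$ and that the hypotheses coincide is all that is needed.
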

When $\aaaa=1$ and $\aaaa=2$ approximation \eqref{c2_1} becomes a central difference approximation for the first and second derivatives of the function $y(x)$
$$y'\llll(x-\dddd{h}{2} \rrrr)=\dddd{y(x)-y(x-h)}{h}+O\left(h^2\right),$$
$$y''\llll(x-h \rrrr)=\dddd{y(x)-2y(x-h)+y(x-2h)}{h^2}+O\left(h^2\right).$$
When $\aaaa=n$, where $n$ is a positive integer, the weights $w^{(\aaaa)}_k=0$ for $k>n$. This case is discussed in \cite{Gorenflo1996}.
Experimental results suggest that when the conditions of Theorem 1 are not satisfied,  the order of approximation \eqref{t1_1}  fluctuates.  While in some cases the order may still be two, in many cases it is lower, even the order may be one and lower than one (Section 4.4.2). Let $y(x)$ be a sufficiently differentiable function. 
\begin{clm} (Approximations for values of a function on a uniform grid) \label{average}
\begin{equation} \label{average2}
y_{n-\bbbb}=\bbbb y_{n-1}+(1-\bbbb)y_n+O\llll( h^2\rrrr),
\end{equation} 
\begin{equation} \label{average3}
y_{n-\beta}=\dddd{1}{2}\beta(\beta-1)y_{n-2}+\bbbb(2-\beta)y_{n-1}+\dddd{1}{2}(\beta-1)(\beta-2)y_{n}+O\llll( h^3\rrrr).
\end{equation}
\end{clm}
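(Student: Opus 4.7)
The plan is to recognize both formulas as evaluations of Lagrange interpolating polynomials, with the error bounds coming from the standard polynomial interpolation remainder. Under the hypothesis that $y(x)$ is sufficiently differentiable (at least $C^2$ for the first identity and $C^3$ for the second on the interval containing the relevant grid nodes), the claimed orders follow essentially at once.

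For \eqref{average2}, I would first observe that the affine function $\bbbb\mapsto\bbbb y_{n-1}+(1-\bbbb)y_n$ takes the value $y_{n-1}$ at $\bbbb=1$ and $y_n$ at $\bbbb=0$, so it is exactly the linear Lagrange interpolant of $y$ through $(x_{n-1},y_{n-1})$ and $(x_n,y_n)$ evaluated at $x_n-\bbbb h$. The classical interpolation error formula then gives
$$y(x_n-\bbbb h)-\bigl[\bbbb y_{n-1}+(1-\bbbb)y_n\bigr]=\frac{y''(\xi)}{2}(x_n-\bbbb h-x_{n-1})(x_n-\bbbb h-x_n)=-\frac{y''(\xi)}{2}\bbbb(1-\bbbb)h^2,$$
which is $O(h^2)$. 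An equivalent route is to Taylor-expand $y_{n-1}$ and $y_{n-\bbbb}$ about $x_n$ and note that the combination $\bbbb y_{n-1}+(1-\bbbb)y_n$ matches $y_{n-\bbbb}$ in its zeroth and first order terms.

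For \eqref{average3}, I would verify that the three coefficients $\tfrac12\bbbb(\bbbb-1)$, $\bbbb(2-\bbbb)$, $\tfrac12(\bbbb-1)(\bbbb-2)$ coincide with the Lagrange basis values $L_{n-2}(t), L_{n-1}(t), L_n(t)$ at $t=n-\bbbb$ for the nodes $n-2, n-1, n$. This is a short algebraic check: each coefficient equals $1$ at the corresponding node and $0$ at the other two (and they sum to $1$). The right-hand side is therefore the quadratic Lagrange interpolant of $y$ through the three grid points evaluated at $x_n-\bbbb h$, and the usual remainder formula
$$\frac{y'''(\xi)}{6}(x_n-\bbbb h-x_{n-2})(x_n-\bbbb h-x_{n-1})(x_n-\bbbb h-x_n)=O(h^3)$$
yields the claimed bound.

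There is no essential obstacle here; the claim amounts to invoking the Lagrange interpolation error estimate twice. The only step requiring any real computation is the identification of the coefficients in \eqref{average3} with the Lagrange basis polynomials, which is a brief algebraic verification. Everything else is automatic once the smoothness of $y(x)$ is granted.
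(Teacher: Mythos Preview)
Your proposal is correct, and it takes a genuinely different route from the paper's own proof. The paper proceeds by Taylor-expanding $y_n$ and $y_{n-1}$ about the intermediate point $x_{n-\bbbb}$ (not about $x_n$), obtaining
$$y_n=y_{n-\bbbb}+\bbbb h\,y'_{n-\bbbb}+\tfrac{\bbbb^2 h^2}{2}y''_{n-\theta_1},\qquad y_{n-1}=y_{n-\bbbb}-(1-\bbbb)h\,y'_{n-\bbbb}+\tfrac{(1-\bbbb)^2 h^2}{2}y''_{n-\theta_2},$$
and then forming the convex combination so that the first-order terms cancel; for \eqref{average3} it simply remarks that the argument is analogous with third-order expansions. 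Your approach instead identifies each formula as the evaluation of the Lagrange interpolant through the relevant nodes and invokes the standard interpolation remainder $\tfrac{y^{(k)}(\xi)}{k!}\prod(x-x_j)$. The Lagrange viewpoint buys you a unified, systematic argument that handles both cases with no separate cancellation bookkeeping and makes the coefficients transparent as basis values; the paper's direct Taylor expansion is slightly more self-contained in that it does not appeal to a named interpolation theorem, and it yields an explicit bound $\tfrac{(1-\bbbb)\bbbb D_2 h^2}{2}$ in terms of a uniform second-derivative bound $D_2$. Both are equally valid here.
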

\begin{proof} From the Mean Value Theorem there exist numbers $\theta_1$ and $\theta_2$, such that
$$y_n=y_{n-\bbbb}+\bbbb h y'_{n-\bbbb}+\dddd{\bbbb^2 h^2}{2}  y''_{n-\theta_1},$$
$$y_{n-1}=y_{n-\bbbb}-(1-\bbbb) h y'_{n-\bbbb}+\dddd{(1-\bbbb)^2 h^2}{2}  y''_{n-\theta_2}.$$
Hence,
$$\bbbb y_{n-1}+(1-\bbbb)y_n=y_{n-\beta}+\dddd{(1-\bbbb)\bbbb^2 h^2}{2}  y''_{n-\theta_1}+\dddd{\bbbb(1-\bbbb)^2 h^2}{2}  y''_{n-\theta_2}.$$
Let $D_2$ be an upper bound for the second derivative. Then
$$\llll|\bbbb y_{n-1}+(1-\bbbb)y_n-y_{n-\beta}\rrrr|\leq \dddd{(1-\bbbb)\bbbb D_2 h^2}{2}.$$
 The proof for approximation (18) uses third order expansions and is similar to the proof of (17).
\end{proof}
\begin{lem} (Approximations for the shifted Gr\"unwald formulas)

 (i) Let $y(b)=y'(b)=y''(b)=0$ and $y\in C^{4}[b,x]$. Then
\begin{equation} \label{app2}
h^{-\alpha}\Delta_{h,p}^\alpha y_n=
\llll(\dfrac{\aaaa}{2}-p  \rrrr)y^{(\alpha )}_{n-1}+
\llll(1+p-\dfrac{\aaaa}{2} \rrrr)y^{(\alpha )}_{n}+O\left(h^2\right);
\end{equation}

 (ii) Let $y(b)=y'(b)=y''(b)=y'''(b)=0$ and $y\in C^{5}[b,x]$. Then
\begin{equation} \label{app3}
h^{-\alpha}\Delta_{h,p}^\alpha y_n=
\oooo{\beta}_1 y_{n-2}^{(\aaaa)}+\oooo{\beta}_2 y_{n-1}^{(\aaaa)}+\oooo{\bbbb}_3 y_n^{(\aaaa)}+O\left(h^3\right),
\end{equation}
where $\oooo{\bbbb}_1=\frac{p}{2}+\frac{p^2}{2}-\frac{5 \alpha }{24}-\frac{p \alpha }{2}+\frac{\alpha ^2}{8}$ and
$$\oooo{\bbbb}_2=-2 p-p^2+\frac{11 \alpha }{12}+p \alpha -\frac{\alpha ^2}{4},\quad
\oooo{\bbbb}_3=1+\frac{3 p}{2}+\frac{p^2}{2}-\frac{17 \alpha }{24}-\frac{p \alpha }{2}+\frac{\alpha ^2}{8}.$$
\end{lem}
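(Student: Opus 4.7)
The plan is to combine Theorem 1 with Claim 3: the former converts the shifted Grünwald formula into a value of the Caputo derivative at a shifted point $x_n+(p-\alpha/2)h = x_{n-\beta}$ with $\beta = \alpha/2-p$, and the latter re-expresses $y^{(\alpha)}_{n-\beta}$ in terms of Caputo derivatives at the grid points $x_{n-2},x_{n-1},x_n$. Since the Caputo derivative $y^{(\alpha)}(x)$ is itself sufficiently smooth when $y$ has the required vanishing conditions and regularity (so Claim 3 applies to the function $x\mapsto y^{(\alpha)}(x)$ rather than $y$ itself), no new analysis is needed: the lemma is essentially a two-line composition of results already established.

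For part (i), I would apply Theorem 1(i) to get
\[
h^{-\alpha}\Delta_{h,p}^\alpha y_n = y^{(\alpha)}\bigl(x_n+(p-\tfrac{\alpha}{2})h\bigr)+O(h^2) = y^{(\alpha)}_{n-\beta}+O(h^2),\qquad \beta=\tfrac{\alpha}{2}-p,
\]
and then invoke \eqref{average2} with the function $y^{(\alpha)}$ in place of $y$, giving
\[
y^{(\alpha)}_{n-\beta}=\beta\, y^{(\alpha)}_{n-1}+(1-\beta)\,y^{(\alpha)}_n+O(h^2)=\Bigl(\tfrac{\alpha}{2}-p\Bigr)y^{(\alpha)}_{n-1}+\Bigl(1+p-\tfrac{\alpha}{2}\Bigr)y^{(\alpha)}_n+O(h^2).
\]
Adding the two errors gives \eqref{app2}.

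For part (ii), Theorem 1(ii) produces the extra term $\tfrac{\alpha}{24}h^2 y^{(2+\alpha)}(x_n)$. I would handle the leading term with \eqref{average3}, yielding
\[
y^{(\alpha)}_{n-\beta}=\tfrac{1}{2}\beta(\beta-1)y^{(\alpha)}_{n-2}+\beta(2-\beta)y^{(\alpha)}_{n-1}+\tfrac{1}{2}(\beta-1)(\beta-2)y^{(\alpha)}_n+O(h^3),
\]
and handle the correction term by the standard second-order finite difference
\[
h^2\,y^{(2+\alpha)}(x_n)= y^{(\alpha)}_{n-2}-2\,y^{(\alpha)}_{n-1}+y^{(\alpha)}_n+O(h^3),
\]
which is legitimate because, under the stated hypotheses, $y^{(\alpha)}(x)$ has enough classical derivatives (namely $y^{(2+\alpha)}$ is continuous, inherited from $y\in C^5$ together with the vanishing of $y(b),y'(b),y''(b),y'''(b)$ which prevents singular terms in the Riemann–Liouville decomposition \eqref{RL_C}). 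Substituting $\beta=\alpha/2-p$ and collecting the coefficients of $y^{(\alpha)}_{n-2},y^{(\alpha)}_{n-1},y^{(\alpha)}_{n}$ reproduces $\bar\beta_1,\bar\beta_2,\bar\beta_3$.

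The only non-routine issue is justifying that the composite function $y^{(\alpha)}(x)$ is smooth enough at the grid endpoints to apply Claim 3 and the central-difference identity; this is where the vanishing conditions $y^{(k)}(b)=0$ for $k=0,1,2$ (respectively $k=0,1,2,3$) are essential, since they ensure via \eqref{RL_C} that the Riemann–Liouville derivatives $D_{RL}^{\alpha+1}y$ and $D_{RL}^{\alpha+2}y$ (equivalently the Miller–Ross derivatives $\tfrac{d}{dx}y^{(\alpha)}$ and $\tfrac{d^2}{dx^2}y^{(\alpha)}$) are continuous up to the lower limit. Everything else is bookkeeping: expand $\tfrac{1}{2}\beta(\beta-1)+\tfrac{\alpha}{24}$, $\beta(2-\beta)-\tfrac{\alpha}{12}$, and $\tfrac{1}{2}(\beta-1)(\beta-2)+\tfrac{\alpha}{24}$ in $p$ and $\alpha$ and verify the stated closed forms for $\bar\beta_1,\bar\beta_2,\bar\beta_3$.
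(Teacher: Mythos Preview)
Your proposal is correct and follows essentially the same route as the paper: apply Theorem~1 at $x=x_n$, then expand $y^{(\alpha)}_{n-\beta}$ with $\beta=\alpha/2-p$ via \eqref{average2} or \eqref{average3}, and in part~(ii) replace $h^2 y^{(2+\alpha)}_n$ by the three-point difference $y^{(\alpha)}_{n-2}-2y^{(\alpha)}_{n-1}+y^{(\alpha)}_n$ (the paper records this as an $O(h)$ approximation to $y^{(2+\alpha)}_n$, which after multiplication by $h^2$ is exactly your $O(h^3)$ statement). Your added remark about why the vanishing conditions at $b$ guarantee enough smoothness of $y^{(\alpha)}$ to invoke Claim~3 is a useful clarification the paper leaves implicit.
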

\begin{proof} From Theorem 1(i) with $x=x_n$ we have
$$h^{-\alpha}\Delta_{h,p}^\alpha y_n=y^{(\alpha )} \left(x_n+\left(p-\dfrac{\alpha}{2}\right) h\right)+O\left(h^2\right).$$
From  \eqref{average2} with $\bbbb=\frac{\aaaa}{2}-p$ we obtain
$$h^{-\alpha}\Delta_{h,p}^\alpha y_n=
\llll(\dfrac{\aaaa}{2}-p  \rrrr)y^{(\alpha )}_{n-1}+
\llll(1+p-\dfrac{\aaaa}{2} \rrrr)y^{(\alpha )}_{n}+O\left(h^2\right).$$
Now we use the formula from Theorem 1(ii) to determine a third order approximation for the shifted Gr\"unwald formulas.
$$h^{-\alpha}\Delta_{h,p}^\alpha y_n=y^{(\alpha )} \left(x_n+\left(p-\dfrac{\alpha}{2}\right) h\right)+\dddd{\aaaa}{24}h^2y^{(2+\aaaa)}_n+O\left(h^3\right).$$
The central difference approximation for $y^{(2+\aaaa)}_n$ with nodes $\{x_{n-2},x_{n-1},x_n\}$ has order $O(h)$
$$y^{(2+\aaaa)}_n=\dddd{y^{(\aaaa)}_{n}-2y^{(\aaaa)}_{n-1}+y^{(\aaaa)}_{n-2}}{h^2}+O(h),$$
Then
$$h^{-\alpha}\Delta_{h,p}^\alpha y_n=y^{(\alpha )} \left(x_n+\left(p-\dfrac{\alpha}{2}\right) h\right)+\dddd{\aaaa h^2}{24}\llll(\dddd{y^{(\aaaa)}_{n}-2y^{(\aaaa)}_{n-1}+y^{(\aaaa)}_{n-2}}{h^2}+O\left(h\right)\rrrr)+O\left(h^3\right).$$
From  \eqref{average3} with $\bbbb=\frac{\aaaa}{2}-p$ we obtain approximation \eqref{app3}.
\end{proof}
 We obtain  second and third order approximations for the Gr\"unwald formula  from \eqref{app2} and $\eqref{app3}$ with $p=0$.
\begin{cor} \label{c5} (Approximations for the Gr\"unwald formula)

 (i) Let $y(b)=y'(b)=y''(b)=0$ and $y\in C^{4}[b,x]$. Then
$$h^{-\alpha}\Delta_{h}^\alpha y_n=
\llll(\dfrac{\aaaa}{2}  \rrrr)y^{(\alpha )}_{n-1}+
\llll(1-\dfrac{\aaaa}{2} \rrrr)y^{(\alpha )}_{n}+O\left(h^2\right);$$
(ii) Let $y(b)=y'(b)=y''(b)=y'''(b)=0$ and $y\in C^{5}[b,x]$. Then
$$h^{-\aaaa}\Delta_h^\aaaa y_n=\llll(\dddd{a^2}{8}-\dddd{5a}{24}\rrrr)y^{(\aaaa)}_{n-2}+\llll(\dddd{11a}{12}-\dddd{a^2}{4}\rrrr)y^{(\aaaa)}_{n-1}+\llll(1-\dddd{17a}{24}+\dddd{a^2}{8}\rrrr)y^{(\aaaa)}_{n}+O\llll( h^3\rrrr).$$
\end{cor}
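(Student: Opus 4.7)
The plan is to derive Corollary \ref{c5} as the direct specialization $p=0$ of the Lemma immediately preceding it, which already establishes second and third order approximations for the general shifted Gr\"unwald formulas $h^{-\alpha}\Delta_{h,p}^\alpha y_n$ under exactly the regularity and vanishing-at-$b$ hypotheses appearing in the corollary. Since the Gr\"unwald formula $h^{-\alpha}\Delta_h^\alpha y_n$ is by definition the shifted Gr\"unwald formula with shift $p=0$, the entire proof reduces to verifying the algebra after substituting $p=0$.

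First I would handle part (i). Under the hypotheses $y(b)=y'(b)=y''(b)=0$ and $y\in C^4[b,x]$, I would invoke \eqref{app2} from the Lemma with $p=0$. The coefficients simplify to $\tfrac{\alpha}{2}-0=\tfrac{\alpha}{2}$ for $y^{(\alpha)}_{n-1}$ and $1+0-\tfrac{\alpha}{2}=1-\tfrac{\alpha}{2}$ for $y^{(\alpha)}_{n}$, yielding the stated identity with error $O(h^2)$.

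For part (ii), I would invoke \eqref{app3} from the Lemma under the stronger hypotheses $y(b)=y'(b)=y''(b)=y'''(b)=0$ and $y\in C^5[b,x]$. Setting $p=0$ in the expressions for the weights $\overline{\beta}_1,\overline{\beta}_2,\overline{\beta}_3$ immediately gives
\begin{align*}
\overline{\beta}_1\big|_{p=0} &= \tfrac{\alpha^2}{8}-\tfrac{5\alpha}{24},\\
\overline{\beta}_2\big|_{p=0} &= \tfrac{11\alpha}{12}-\tfrac{\alpha^2}{4},\\
\overline{\beta}_3\big|_{p=0} &= 1-\tfrac{17\alpha}{24}+\tfrac{\alpha^2}{8},
\end{align*}
matching exactly the coefficients stated in the corollary, with error $O(h^3)$.

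There is no substantive obstacle: the Lemma has already absorbed all of the analytic work (the estimate \eqref{estimate}, the passage from Riemann--Liouville to Caputo derivatives via the vanishing boundary conditions on $y,y',y'',y'''$, the application of Theorem 1(i)--(ii), and the approximations for function values in Claim \ref{average}). The only thing to check is the clerical substitution $p=0$ in the weight formulas, which I would present as a short two-line computation for each of the two parts.
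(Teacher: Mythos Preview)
Your proposal is correct and matches the paper's own approach exactly: the text immediately preceding Corollary~\ref{c5} states that the second and third order approximations for the Gr\"unwald formula are obtained from \eqref{app2} and \eqref{app3} with $p=0$. The only content is the clerical substitution you describe, and your verification of the weights is accurate.
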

In Corollary 2 and Corollary 5 we determined second and third order approximations for the Gr\"unwald formula using  values of Caputo derivatives. The three approximations  are suitable for  algorithms for numerical  solution of fractional differential equations on  a uniform grid. In the next lemma, we discuss a property of the Caputo derivative of a continuously differentiable function in a neighborhood of the lower limit $b$.
\begin{lem} \label{l6} Let $0<\aaaa<1$ and $y\in C^1[b,b+\epsilon]$, where $\epsilon>0$. Then
$$y^{(\aaaa)}(b)=0.$$
\end{lem}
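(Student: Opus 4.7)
The plan is to work directly from the integral definition of the Caputo derivative and show that the integral vanishes as $x \to b^+$, using only the boundedness of $y'$ near $b$ and the integrability of the kernel $(x-\xi)^{-\alpha}$ for $0<\alpha<1$.

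First I would recall that, by \eqref{Cder},
$$y^{(\alpha)}(x) = \frac{1}{\Gamma(1-\alpha)} \int_b^x \frac{y'(\xi)}{(x-\xi)^\alpha}\, d\xi,$$
and interpret $y^{(\alpha)}(b)$ as $\lim_{x \to b^+} y^{(\alpha)}(x)$. Since $y \in C^1[b,b+\epsilon]$, the derivative $y'$ is continuous on a compact interval containing $[b,b+\epsilon]$, hence bounded: set $M := \sup_{\xi \in [b,b+\epsilon]} |y'(\xi)|$.

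Next I would estimate the integral for any $x \in (b, b+\epsilon]$:
$$|y^{(\alpha)}(x)| \le \frac{M}{\Gamma(1-\alpha)} \int_b^x \frac{d\xi}{(x-\xi)^\alpha} = \frac{M}{\Gamma(1-\alpha)} \cdot \frac{(x-b)^{1-\alpha}}{1-\alpha}.$$
This uses the elementary computation $\int_b^x (x-\xi)^{-\alpha}d\xi = (x-b)^{1-\alpha}/(1-\alpha)$, which is valid precisely because $0<\alpha<1$ makes the singularity of the kernel integrable.

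Finally, since $1-\alpha > 0$, the right-hand side tends to $0$ as $x \to b^+$, so $y^{(\alpha)}(b) = 0$. There is no real obstacle here; the only subtlety is justifying the interpretation of $y^{(\alpha)}(b)$ as a one-sided limit (the integral from $b$ to $b$ is empty), which the $C^1$ assumption together with the integrability of $(x-\xi)^{-\alpha}$ handles cleanly. The same bound in fact shows $y^{(\alpha)}(x) = O((x-b)^{1-\alpha})$ near $b$, which is a slightly stronger statement than the lemma asks for.
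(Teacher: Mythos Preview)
Your proof is correct and follows essentially the same approach as the paper: bound $|y'|$ by its supremum on $[b,b+\epsilon]$, integrate the kernel explicitly to get the estimate $|y^{(\alpha)}(x)|\le \dfrac{M(x-b)^{1-\alpha}}{\Gamma(2-\alpha)}$, and take the limit $x\to b^+$. The only cosmetic difference is that the paper writes the constant as $\Gamma(2-\alpha)^{-1}$ via $\Gamma(2-\alpha)=(1-\alpha)\Gamma(1-\alpha)$ and invokes the squeeze law explicitly.
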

\begin{proof} The function $y'$ is  bounded on the interval 
$[b,b+\epsilon]$. Let
$$\delta=\max_{b\leq x\leq b+\epsilon}y'(x).$$
 When $b<x<b+\epsilon$ we have 
$$\llll|y^{(\aaaa)}(x)\rrrr|\leq \dddd{1}{\Gamma(1-\aaaa)}\int_{b}^{x}\dddd{\llll|y'(\xi)\rrrr|}{(x-\xi)^\aaaa}d\xi\leq
\dddd{\delta}{\Gamma(1-\aaaa)}\int_b^x (x-\xi)^{-\aaaa}d\xi,$$
\begin{equation}\label{eq_l6}
\llll|y^{(\aaaa)}(x)\rrrr|\leq 
\dddd{\delta}{\Gamma(1-\aaaa)}\llll.-\dddd{(x-\xi)^{1-\aaaa}}{1-\aaaa} \rrrr|_b^x\leq 
\dddd{\delta (x-b)^{1-\aaaa}}{\Gamma(2-\aaaa)}.
\end{equation}
From the squeeze low of limits
$$0\leq \lim_{x\downarrow b} \llll|y^{(\aaaa)}(x)\rrrr|\leq 
\lim_{x\downarrow b} \dddd{\delta (x-b)^{1-\aaaa}}{\Gamma(2-\aaaa)}=0.$$
Hence,
$$y^{(\aaaa)}(b)=\lim_{x\downarrow b}y^{(\aaaa)}(x)=0.$$
\end{proof}
In the next two sections we compute approximations for the solutions of equations (1) and (2) when $b=0$ and the solutions are sufficiently differentiable functions. We use the result from Lemma 6 to determine the values of the  derivatives of the solution of equation (2) at the lower limit $x=0$, and the partial derivatives $u_t(x,0)$ and $u_{tt}(x,0)$ of the solution of the fractional sub-diffusion equation (1) when $t=0$.
\section{Numerical solution of  ordinary fractional differential equations}
The field of numerical computations for fractional differential equations has been rapidly gaining popularity for the last several decades. The fractional differential equations have diverse algorithms for computation of numerical solutions and a potential for practical applications. 
The analytical solutions of linear fractional differential equations with constant coefficients are determined with the integral transforms method \cite{Podlubny1999}. The algorithms for numerical solution can be used for a much larger class of equations including fractional differential equations with non-constant coefficients. 
Dithelm et al. \cite{DiethelmFordFreed2002} proposed  a prediction-correction algorithm for numerical approximation for ordinary fractional differential equations with accuracy $O\llll(h^{\min\{2,1+\aaaa\} }\rrrr)$. Deng \cite{Deng2007} presented an improved  prediction-correction algorithm with accuracy $O\llll(h^{\min\{2,1+2\aaaa\} }\rrrr)$. Higher order prediction-correction algorithms are discussed in \cite{YanPalFord2013}.
While the number of computations for  numerical solution  of ordinary fractional differential equations is much smaller than the number of computations for partial fractional differential equations it is greater than the number of computations for ordinary differential equations. An acceptable approximation \eqref{Recurrence2} for the solution of equation (2) is  obtained from the Gr\"unwald formula approximation for the Caputo derivative. It converges to the exact solution with accuracy $O(h)$. An improved approximation \eqref{Reccurence1} with accuracy $O\llll(h^{2-\aaaa}\rrrr)$ is obtained when we use approximation \eqref{CaputoApproximation}  instead of the  Gr\"unwald formula. Numerical experiments  for equation \eqref{eq22} and  approximations \eqref{Recurrence2} and \eqref{Reccurence1} on the interval $[0,1]$ are given in Table 1. In Figure 1 the two approximations are compared with the exact solution and the second order approximation \eqref{Recurrence3},  when $h=0.1$. 

In the present section we use approximations \eqref{SecondOrder2} and \eqref{ThirdOrder} to obtain recurrence relations for second and third order approximations to the solution of ordinary fractional differential equation (2) on the interval $[0,1]$,
	\begin{equation*} 
	y^{(\alpha )}(x)+y(x)=f(x)
	\end{equation*}
	 and we give a proof for the convergence of the algorithm. We can assume that equation \eqref{eq2} has initial condition
	$$y(0)=0$$
	because the function $\oooo{y}(x)=y(x)-y(0)$ is a solution of equation \eqref{eq2} with right-hand side $\oooo{f}(x)=f(x)-y(0)$. The exact solution of  equation \eqref{eq2} is obtained with the Laplace  transform method 
	\cite{Podlubny1999}
	$$y(x)=\int_{0}^{x}\xi^{\aaaa-1} E_{\aaaa,\aaaa}\llll(-\xi^\aaaa\rrrr)f(x-\xi)d\xi.$$
	An alternative approach to nonzero initial condition is to use approximations \eqref{SecondOrder},\eqref{SecondOrder2} and \eqref{ThirdOrder} for the function $y(x)-y(0)$ (as in formulas \eqref{ShiftedGrunwaldApproximation} and \eqref{2ndOrdApp}).
	
	 Let $h=T/N$ where $T>0$ and $N$ is a positive integer. By approximating the fractional derivative at the point $x_n=n h$ using the Gr\"unwald formula we obtain
$$\dfrac{1}{h^\alpha} \sum_{k=0}^n w_k^{(\alpha)} y_{n-k}+y_n\approx f_n,$$
$$ y_n+ \sum_{k=1}^n w_k^{(\alpha)} y_{n-k}+h^\alpha y_n\approx h^\alpha f_n.$$
The truncation errors of the two approximations are $O(h)$ and $O\llll(h^{1+\aaaa}\rrrr)$.
We compute an approximation $\widetilde{y}_n$ to the exact solution of \eqref{eq2} at the point $x_n$  with $\widetilde{y}_0=0$ and the recurrence relations  
	\begin{equation}\label{Recurrence2}
\widetilde{y}_n=\dfrac{1}{1+h^\alpha} \left( h^\alpha f_n-\sum_{k=1}^n 
w_k^{(\alpha)} \widetilde{y}_{n-k} \right).
\end{equation}
Approximation \eqref{Recurrence2} has accuracy  $O(h)$. 
	By approximating at the Caputo derivative at the points $x_n$   with \eqref{CaputoApproximation} we obtain similar recurrence relations  
\begin{equation}\label{Reccurence1}
\widetilde{y}_n=\dfrac{1}{h^\alpha+c_0^{(\alpha)}} \left( h^\alpha f_n-\sum_{k=1}^n c_k^{(\alpha)} \widetilde{y}_{n-k} \right).
\end{equation}
The accuracy of approximation \eqref{Reccurence1} is $O\llll(h^{2-\aaaa}\rrrr)$.
We evaluate numerically the performance of approximations \eqref{Recurrence2} and \eqref{Reccurence1} for the  equation
	\begin{equation}\label{eq22}
	y^{(\aaaa)}(x)+y(x)=2x^{2+\aaaa}+\Gamma(3+\aaaa)x^2.  
	\end{equation}
	Equation \eqref{eq22} has solution $y(x)=2x^{2+\aaaa}$.
	When $h=0.1$ and $\alpha=2/3$ the maximum error of approximations \eqref{Recurrence2}  and \eqref{Reccurence1} are  $0.111521$ and $0.0545347$.  In Table 1 we compute the maximum  errors and the order of approximations  \eqref{Recurrence2}  and \eqref{Reccurence1} for $\aaaa=2/3$ and different values of $h$. The graphs of  approximations \eqref{Recurrence2}  and \eqref{Reccurence1} (filled squares and empty circles) and the  solution of \eqref{eq22} on the interval $[0,1]$ are given in Figure 1.
\begin{table}
    \caption{Maximum error and order of  approximations \eqref{Recurrence2} and \eqref{Reccurence1} for equation \eqref{eq22} on the interval $[0,1]$ when  $\alpha=2/3$.}
    \begin{subtable}{0.5\linewidth}
      \centering
  \begin{tabular}{l c c }
  \hline \hline
    $h$ & $Error$ & $Order$  \\ 
		\hline \hline
$0.05$     &  $0.0560953$ & $0.991369$ \\
$0.025$    &  $0.0281318$ & $0.995677$ \\
$0.0125$   &  $0.0140870$ & $0.997837$ \\
$0.00625$  &  $0.0070488$ & $0.998918$  \\
$0.003125$ &  $0.0035257$ & $0.999459$  \\
		\hline
  \end{tabular}
    \end{subtable}%
    \begin{subtable}{.5\linewidth}
      \centering
				\quad
  \begin{tabular}{ l  c  c }
    \hline \hline
    $h$ & $Error$ &$Order$  \\ \hline \hline
$0.05$     & $0.0223527$    & $1.28672$ \\
$0.025$    & $0.0090448$    & $1.30529$ \\
$0.0125$   & $0.0036319$   & $1.31636$ \\
$0.00625$  & $0.0014517$   & $1.32299$ \\
$0.003125$ & $0.0005786$   & $1.32699$ \\ \hline
  \end{tabular}
    \end{subtable} 
\end{table}
\subsection{Second-order approximations}
In Corollary 5 we determined a second order approximation \eqref{SecondOrder2} for the Gr\"unwald formula with average values of Caputo derivatives. We use the approximation and \eqref{average2} to obtain second order numerical solutions \eqref{Recurrence3} and \eqref{Recurrence4} for equation (2). 
$$h^{-\alpha}\Delta_{h}^\alpha y_n=
\llll(\dfrac{\aaaa}{2}  \rrrr)y^{(\alpha )}_{n-1}+
\llll(1-\dfrac{\aaaa}{2} \rrrr)y^{(\alpha )}_{n}+O\llll( h^2\rrrr).$$
From equation (2) we have
$$y^{(\alpha )}_{n-1}=f_{n-1}-y_{n-1},\quad y^{(\alpha )}_{n}=f_{n}-y_{n}.$$
Then
$$\Delta_{h}^\alpha y_n=
h^{\alpha}\llll(\dfrac{\aaaa}{2}  \rrrr)(f_{n-1}-y_{n-1})+h^{\alpha}
\llll(1-\dfrac{\aaaa}{2} \rrrr)(f_{n}-y_{n})+O\llll( h^{2+\aaaa}\rrrr).$$
The Gr\"unwald formula for $y_n$ is defined as
$$\Delta_h^\aaaa y_n=\sum_{k=0}^{n} w_k^{(\aaaa)} y_{n-k}=y_n+\sum_{k=1}^{n} 
w_k^{(\aaaa)} y_{n-k}.$$
Let
$$\gamma=\dddd{1}{1+h^\aaaa\llll(1-\dddd{\aaaa}{2}\rrrr)}.$$
The exact solution of equation (2) satisfies
\begin{equation*}
y_n=\gamma
\llll(h^{\alpha}\llll(\dfrac{\aaaa}{2}  \rrrr)(f_{n-1}-y_{n-1})+h^{\alpha}
\llll(1-\dfrac{\aaaa}{2} \rrrr)f_{n} -\sum_{k=1}^{n} w_k^{(\aaaa)} y_{n-k}\rrrr)+O\llll( h^{2+\aaaa}\rrrr).
\end{equation*}
We can obtain a more convenient form of the above formula  using \eqref{average2}
$$f_{n-\frac{\aaaa}{2}}=\llll(\dddd{\aaaa}{2}\rrrr)f_{n-1}+\llll(1-\dddd{\aaaa}{2} \rrrr)f_n+O\llll( h^{2}\rrrr),$$
\begin{equation*}
y_n=\dfrac{1}{1+\left(1-\dfrac{\alpha}{2} \right)h^\alpha} \left( h^\alpha f_{n-\frac{\alpha}{2}}+\dddd{\aaaa}{2}\left( 2-h^\alpha\right) y_{n-1}-\sum_{k=2}^n w_k^{(\aaaa)} y_{n-k} \right)+O\llll( h^{2+\aaaa}\rrrr).
\end{equation*}

We compute  second order  approximations $\widetilde{y}_n$ to the exact solution   $y_n$ of equation (2) with $\wwww{y}_0=0$ and the  recurrence relations 
\begin{equation}\label{Recurrence3}
\widetilde{y}_n=\dfrac{1}{1+\left(1-\dfrac{\alpha}{2} \right)h^\alpha} \left( h^\alpha f_{n-\frac{\alpha}{2}}+\dddd{\aaaa}{2}\left( 2-h^\alpha\right)\widetilde{y}_{n-1}-\sum_{k=2}^n w_k^{(\aaaa)} \widetilde{y}_{n-k} \right),
\end{equation}
\begin{equation}\label{Recurrence4}
\widetilde{y}_n=\gamma \left( h^\alpha \llll(\llll(1-\dddd{\aaaa}{2} \rrrr)f_{n}+ \dddd{\aaaa}{2} f_{n-1}\rrrr)+\dddd{\aaaa}{2}\left( 2-h^\alpha\right)\widetilde{y}_{n-1}-\sum_{k=2}^n w_k^{(\aaaa)} \widetilde{y}_{n-k} \right).
\end{equation}

\begin{thm}\label{t7} Suppose that the solution $y(x)$ of equation (2) is sufficiently differentiable function on an interval $[0,T]$ and
$$y(0)=y'(0)=y''(0)=0.$$
 Then approximations \eqref{Recurrence3} and  \eqref{Recurrence4} converge to the solution of  equation \eqref{eq2} with accuracy $O\llll(h^2\rrrr)$.
\end{thm}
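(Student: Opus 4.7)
My plan is a standard truncation-plus-stability analysis; the only delicate point is to convert a per-step consistency error of size $O(h^{2+\alpha})$ into a global error of size $O(h^2)$, rather than the $O(h^{1+\alpha})$ one would get by naively summing over $N=T/h$ steps. Writing $e_n=y_n-\widetilde{y}_n$, I would first substitute the exact solution into the right-hand side of the scheme and identify the residual. Under the hypotheses $y(0)=y'(0)=y''(0)=0$ and $y$ smooth enough, Corollary~\ref{c5}(i) gives
$$h^{-\alpha}\Delta_h^\alpha y_n=\dddd{\alpha}{2}y_{n-1}^{(\alpha)}+\llll(1-\dddd{\alpha}{2}\rrrr)y_n^{(\alpha)}+O\llll(h^2\rrrr),$$
and substituting $y_k^{(\alpha)}=f_k-y_k$ from \eqref{eq2}, multiplying through by $h^\alpha$, and (for \eqref{Recurrence3}) replacing the two-point average of $f$ by $f_{n-\alpha/2}$ via Claim~\ref{average}, one sees that the exact $y_n$ satisfies both recurrences up to an additive perturbation $\tau_n$ with $|\tau_n|\leq Ch^{2+\alpha}$ uniformly in $n$.

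Subtracting the numerical scheme from this identity produces the error recurrence
$$e_n=\gamma\llll(\dddd{\alpha(2-h^\alpha)}{2}e_{n-1}-\sum_{k=2}^n w_k^{(\alpha)}e_{n-k}\rrrr)+\gamma\tau_n,\qquad e_0=0,$$
with $\gamma=1/[1+(1-\alpha/2)h^\alpha]$. Since $w_k^{(\alpha)}<0$ for $k\geq 2$ by \eqref{GrunwaldProperties} and $2-h^\alpha>0$ for $h$ small, every coefficient in front of a previous error is non-negative, so the recursion is monotone and $|e_n|$ is dominated by the corresponding linear recursion with $|\tau_n|$ forcing.

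The heart of the proof—and what I expect to be the main obstacle—is to show that the sum of these positive coefficients lies strictly below $1$ with an $O(h^\alpha)$ gap. Using $w_0^{(\alpha)}+w_1^{(\alpha)}=1-\alpha$ together with $\sum_{k=0}^n w_k^{(\alpha)}>0$ from \eqref{GrunwaldProperties}, a direct computation gives
$$\dddd{\alpha(2-h^\alpha)}{2}-\sum_{k=2}^n w_k^{(\alpha)}=1-\dddd{\alpha h^\alpha}{2}-\sum_{k=0}^n w_k^{(\alpha)}\leq 1-\dddd{\alpha h^\alpha}{2},$$
so the total multiplier is at most $\gamma(1-\alpha h^\alpha/2)=1-\gamma h^\alpha$. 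A routine induction of the form $|e_n|\leq(1-\gamma h^\alpha)\max_{k<n}|e_k|+\gamma|\tau_n|$ then yields
$$\max_{0\leq n\leq N}|e_n|\leq\dddd{\max_n|\tau_n|}{h^\alpha}\leq\dddd{Ch^{2+\alpha}}{h^\alpha}=Ch^2,$$
which is exactly the claimed order. The argument for \eqref{Recurrence4} is identical, with the $O(h^2)$ discrepancy between $f_{n-\alpha/2}$ and its two-point Taylor average absorbed into $\tau_n$. The essential point is that without the $\gamma h^\alpha$ gap the analysis would only return the weaker rate $h^{1+\alpha}$; it is this gap, arising from the positivity of $\sum_{k=0}^n w_k^{(\alpha)}$, that recovers the full second-order accuracy.
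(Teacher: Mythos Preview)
Your argument is correct, and it takes a genuinely different route from the paper's. The paper does not give a separate proof of Theorem~\ref{t7}; it simply says the proof is ``similar to the proof of Theorem~\ref{t14}.'' That proof (via Lemma~\ref{ll13}) establishes by induction the weighted bound $|e_n|\le C\,n^{\alpha}h^{3+\alpha}$ (here it would be $|e_n|\le C\,n^{\alpha}h^{2+\alpha}$), and the key ingredient is the \emph{quantitative} lower bound on the tail $\sum_{k\ge n}|w_k^{(\alpha)}|\ge\frac{1-\alpha}{5}(2/n)^{\alpha}$ from Lemma~\ref{l12}; the factor $n^{\alpha}$ is then absorbed into $T^{\alpha}$ since $n\le T/h$. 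Your proof bypasses Lemma~\ref{l12} entirely: you discard the positive term $\sum_{k=0}^n w_k^{(\alpha)}$ and rely solely on the $O(h^{\alpha})$ damping that comes from the implicit treatment of $y_n$ in the scheme, obtaining the contraction factor $\gamma(1-\alpha h^{\alpha}/2)=1-\gamma h^{\alpha}$ and the clean fixed-point bound $\max_n|e_n|\le\max_n|\tau_n|/h^{\alpha}$. This is more elementary and slightly sharper (it gives a bound uniform in $n$, not just $\le C(nh)^{\alpha}h^{2}$). The paper's approach, on the other hand, is what is actually needed for the PDE analysis in Section~5, where the analogous ``$h^{\alpha}$ gap'' argument is unavailable because the matrix $R$ need not be a contraction in norm; there the tail estimate of Lemma~\ref{l12} becomes essential.
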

The proof of Theorem \ref{t7} is similar to the proof of Theorem \ref{t14}. The two numerical solutions \eqref{Recurrence3} and  \eqref{Recurrence4} have second order accuracy $O\llll(h^2\rrrr)$. Experimental results suggest that in many cases the error of approximation \eqref{Recurrence3} is smaller than the error of approximation  \eqref{Recurrence4} because it has a smaller truncation error.
In Table 2 we compute the maximum error and order of    approximation \eqref{Recurrence3} for equation \eqref{eq22} and different values of $h$.
When $h=0.1$ and $\alpha=2/3$ the error of  approximation \eqref{Recurrence3} is $ 0.005828$.  In Figure 2 we compare approximations \eqref{Recurrence2},\eqref{Reccurence1}  and \eqref{Recurrence3} with the exact solution of equation \eqref{eq22}.
\begin{figure}[ht]
  \centering
  \caption{Graph of the exact solution of equation \eqref{eq22} and  approximations \eqref{Recurrence2}, \eqref{Reccurence1} and \eqref{Recurrence3} for $h=0.1$ and $\alpha=2/3$.} 
  \includegraphics[width=0.55\textwidth]{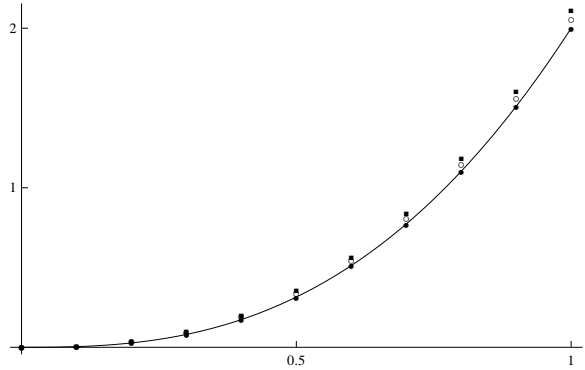}
\end{figure}
	\begin{table}
	\caption{Maximum error and order of approximation \eqref{Recurrence3} for equation \eqref{eq22} on the interval $[0,1]$ and  $\alpha=2/3$.}
	\centering
  \begin{tabular}{ l  c   c c }
    \hline \hline
    $h$  & $Error$                 & $Ratio$ & $\log_2 (Ratio)$  
		\\ \hline \hline
$0.05$     & $0.00146501$            & $3.97836$  & $1.99217$   \\ 
$0.025$    & $0.00036724$           & $3.98921$  & $1.99610$   \\ 
$0.0125$   & $0.00009193$           & $3.99461$  & $1.99805$   \\ 
$0.00625$  & $0.00002210$          & $3.99731$  & $1.99903$   \\ 
$0.003125$ & $5.75\times 10^{-6}$  & $3.99865$  & $1.99951$   \\ \hline
  \end{tabular}
	\end{table}
	
Suppose that the solution $y(x)$ of  equation (2) is sufficiently differentiable function. Denote
$$L_1=y'(0),\quad L_2=y''(0),\quad L_3=y'''(0).$$
In the next lemma we determine  $L_1$ and $L_2$ from the function $f(x)$.
\begin{lem} Suppose that $y(x)$ is a sufficiently differentiable solution of   \eqref{eq2}. 
		  $$f(0)=0,\quad L_1=\lim_{x \downarrow 0} f^{(1-\aaaa)}(x),$$
	 $$L_2=\lim_{x\downarrow 0}\llll( \dddd{d}{dx}f^{(1-\aaaa)}(x)-
	\dfrac{L_1 x^{\aaaa-1}}{\Gamma (\aaaa)}\rrrr).$$
\end{lem}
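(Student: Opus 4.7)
The plan is to convert equation \eqref{eq2} into an integer-derivative relation by applying the Caputo operator $D^{1-\aaaa}$ to both sides, and then to read off the values of $L_1$ and $L_2$ by taking limits at $x=0$. The first claim $f(0)=0$ is immediate: since $y\in C^1[0,\epsilon]$ and $y(0)=0$, Lemma \ref{l6} gives $y^{(\aaaa)}(0)=0$, so $f(0)=y^{(\aaaa)}(0)+y(0)=0$.

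For $L_1$ I would apply $D^{1-\aaaa}$ to both sides of \eqref{eq2}. Using the Miller--Ross sequential composition $y^{(1)}(x)=D^{1-\aaaa}D^{\aaaa}y(x)=y'(x)$, this gives
\[y'(x)+y^{(1-\aaaa)}(x)=f^{(1-\aaaa)}(x).\]
Since $y\in C^1[0,\epsilon]$ with $y(0)=0$, Lemma \ref{l6} applied with $\aaaa$ replaced by $1-\aaaa$ yields $\lim_{x\downarrow 0}y^{(1-\aaaa)}(x)=0$, so letting $x\downarrow 0$ produces $L_1=y'(0)=\lim_{x\downarrow 0}f^{(1-\aaaa)}(x)$, as claimed.

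For $L_2$ I would differentiate the preceding identity once more to obtain
\[y''(x)+\dfrac{d}{dx}y^{(1-\aaaa)}(x)=\dfrac{d}{dx}f^{(1-\aaaa)}(x),\]
and then show that $\dfrac{d}{dx}y^{(1-\aaaa)}(x)-\dfrac{L_1 x^{\aaaa-1}}{\Gamma(\aaaa)}\to 0$ as $x\downarrow 0$. Substituting the Taylor expansion $y'(\xi)=L_1+L_2\xi+O(\xi^2)$ into the integral representation $y^{(1-\aaaa)}(x)=\dfrac{1}{\Gamma(\aaaa)}\int_0^x (x-\xi)^{\aaaa-1}y'(\xi)\,d\xi$ and integrating termwise gives $y^{(1-\aaaa)}(x)=\dfrac{L_1 x^{\aaaa}}{\Gamma(\aaaa+1)}+\dfrac{L_2 x^{\aaaa+1}}{\Gamma(\aaaa+2)}+O(x^{\aaaa+2})$. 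Differentiating produces $\dfrac{L_1 x^{\aaaa-1}}{\Gamma(\aaaa)}+\dfrac{L_2 x^{\aaaa}}{\Gamma(\aaaa+1)}+O(x^{\aaaa+1})$, so the desired difference is $O(x^{\aaaa})$ and vanishes as $x\downarrow 0$. Taking the limit in the displayed identity and rearranging then yields the claimed formula for $L_2$.

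The main obstacle will be justifying the two analytic manipulations under the stated smoothness hypotheses: the Miller--Ross composition $D^{1-\aaaa}D^{\aaaa}y=y'$ and the small-$x$ asymptotic expansion of $y^{(1-\aaaa)}(x)$. Both can be handled uniformly by applying the Caputo differentiation formula $D_C^{\beta}x^r=\dfrac{\Gamma(r+1)}{\Gamma(r+1-\beta)}x^{r-\beta}$ for $r>0$ termwise to the Taylor expansion of $y$ about zero (permissible since $y(0)=0$ so no constant term obstructs the Caputo action), with the remainder controlled by the integral form of Taylor's theorem combined with the bound used in Lemma \ref{l6}; this simultaneously verifies $D^{1-\aaaa}y^{(\aaaa)}(x)=y'(x)$ and produces the expansion of $y^{(1-\aaaa)}(x)$ near the origin.
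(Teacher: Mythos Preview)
Your proposal is correct and follows the same overall route as the paper: establish $f(0)=0$ via Lemma~\ref{l6}, apply $D^{1-\aaaa}$ to \eqref{eq2} to get $y'(x)+y^{(1-\aaaa)}(x)=f^{(1-\aaaa)}(x)$ for $L_1$, then differentiate once more for $L_2$.

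The one place you diverge is in the analysis of $\dfrac{d}{dx}y^{(1-\aaaa)}(x)$. You expand $y'(\xi)$ in a Taylor series and integrate termwise to extract the singular part $\dfrac{L_1 x^{\aaaa-1}}{\Gamma(\aaaa)}$ asymptotically. The paper instead integrates by parts once inside the Caputo integral to obtain the \emph{exact} identity
\[
\dfrac{d}{dx}y^{(1-\aaaa)}(x)=\dfrac{y'(0)\,x^{\aaaa-1}}{\Gamma(\aaaa)}+D_x^{2-\aaaa}y(x),
\]
and then invokes $D_x^{2-\aaaa}y(0)=0$ (again by the Lemma~\ref{l6} mechanism, since $y\in C^2$). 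Both arguments are valid. Your Taylor-expansion route is slightly more direct for this single lemma but requires some care when you differentiate the $O(x^{\aaaa+2})$ remainder (termwise differentiation of an asymptotic expansion is not automatic; you rightly note this is handled via the integral remainder). The paper's integration-by-parts identity is cleaner in that respect and has the additional payoff of being reusable: it is recorded as \eqref{form27} and invoked again verbatim in the later computation of $L_3$.
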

\begin{proof} From Lemma \ref{l6} we have that
$$y^{(\aaaa)}(0)=0,\quad\text{and}\quad y^{(1-\aaaa)}(0)=0.
$$
Then 
$$0=y(0)=f(0)-y^{(\aaaa)}(0)=f(0).$$
By applying fractional differentiation of order $1-\aaaa$ to both sides of \eqref{eq2} 
$$y'(x)+y^{(1-\aaaa)}(x)=f^{(1-\aaaa)}(x).$$
Hence,
$$L_1=y'(0)=f^{(1-\aaaa)}(0)-y^{(1-\aaaa)}(0)=f^{(1-\aaaa)}(0)=\lim_{x \downarrow 0} f^{(1-\aaaa)}(x).$$
 By differentiating equation \eqref{eq2} we obtain
\begin{align}\label{fdeq}
y''(x)+y^{(1+(1-\aaaa))}(x)=f^{(1+(1-\aaaa))}(x).
\end{align}
We determine the value of the Miller-Ross derivative $y^{(1+(1-\aaaa))}(0)$ using differentiation and integration by parts.
$$y^{(1+(1-\aaaa))}(x)=\dddd{d}{dx}y^{(1-\aaaa)}(x)=\dddd{1}{\Gamma(\aaaa)}\dddd{d}{dx}\llll(\int_{0}^x \dddd{y'(\xi)}{(x-\xi)^{1-\aaaa}}d\xi\rrrr),$$
$$\Gamma(\aaaa)y^{(1+(1-\aaaa))}(x)=-\dddd{d}{dx}\llll(\int_{0}^x y'(\xi)d\frac{(x-\xi)^{\aaaa}}{\aaaa}\rrrr),$$
$$\Gamma(1+\aaaa)y^{(1+(1-\aaaa))}(x)=-
\dddd{d}{dx}\llll( \llll. y'(\xi)(x-\xi)^{\aaaa}\rrrr]_0^x
-\int_{0}^x y''(\xi) (x-\xi)^{\aaaa}d\xi\rrrr),$$
$$\Gamma(1+\aaaa)y^{(1+(1-\aaaa))}(x)=-
\dddd{d}{dx}\llll( - y'(0) x^{\aaaa}
-\int_{0}^x y''(\xi) (x-\xi)^{\aaaa}d\xi\rrrr),$$
\begin{align}\label{form27}
y^{(1+(1-\aaaa))}(x)=\dddd{y'(0) x^{\aaaa-1}}{\Gamma(\aaaa)}+
\dddd{1}{\Gamma(\aaaa)}\int_{0}^x \dddd{y''(\xi)}{(x-\xi)^{1-\aaaa}}d\xi,
\end{align}
$$y^{(1+(1-\aaaa))}(x)=\dddd{y'(0) x^{\aaaa-1}}{\Gamma(\aaaa)}+
D_x^{2-\aaaa}y(x).$$
From \eqref{fdeq},
$$y''(x)=f^{(1+(1-\aaaa))}(x)-\dddd{L_1 x^{\aaaa-1}}{\Gamma(\aaaa)}-D_x^{2-\aaaa}y(x).$$
We have that $D_x^{2-\aaaa}y(0)=0$, when $y\in C^2[0,1]$. Hence,
$$L_2=y''(0)=\lim_{x\rightarrow 0}\llll(f^{(1+(1-\aaaa))}(x)-\dddd{L_1 x^{\aaaa-1}}{\Gamma(\aaaa)}\rrrr).$$
\end{proof}
Let 
\begin{align}\label{e27}
z(x)=y(x)-L_1x-\dddd{L_2}{2} x^2.
\end{align}
\begin{lem} The function $z(x)$ satisfies $z(0)=z'(0)=z''(0)=0$ and is a solution of the ordinary  fractional differential equation
\begin{align}\label{eqn222}
z^{(\aaaa)}(x)+z(x)=F(x),
\end{align}
where
$$F(x)=f(x)-L_1 x-\dddd{L_2}{2} x^2-\dfrac{L_1}{\Gamma (2-\aaaa)}x^{1-\aaaa}-\dfrac{ L_2}{\Gamma (3-\aaaa)}x^{2-\aaaa}.$$
\end{lem}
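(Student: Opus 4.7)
The proof is essentially a direct verification split into two parts: checking the three initial conditions at $x=0$, and then computing $z^{(\alpha)}(x)+z(x)$ explicitly.

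First I would verify $z(0)=z'(0)=z''(0)=0$ by straightforward differentiation of \eqref{e27}. Since $y(0)=0$ is the initial condition imposed on equation \eqref{eq2} at the beginning of this section, we get $z(0)=y(0)=0$. Differentiating once gives $z'(x)=y'(x)-L_1-L_2 x$, so $z'(0)=y'(0)-L_1=0$ by the definition $L_1=y'(0)$. Differentiating again gives $z''(x)=y''(x)-L_2$, so $z''(0)=y''(0)-L_2=0$ by the definition $L_2=y''(0)$.

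Next I would compute $z^{(\alpha)}(x)$ using linearity of the Caputo derivative together with the standard power rule
$$D_x^{\alpha}x^{k}=\dfrac{\Gamma(k+1)}{\Gamma(k+1-\alpha)}x^{k-\alpha} \quad (k=1,2),$$
which for our purposes yields $D_x^{\alpha}x=x^{1-\alpha}/\Gamma(2-\alpha)$ and $D_x^{\alpha}x^{2}=2 x^{2-\alpha}/\Gamma(3-\alpha)$. Applying Caputo differentiation to \eqref{e27} therefore gives
$$z^{(\alpha)}(x)=y^{(\alpha)}(x)-\dfrac{L_1}{\Gamma(2-\alpha)}x^{1-\alpha}-\dfrac{L_2}{\Gamma(3-\alpha)}x^{2-\alpha}.$$

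Finally I would add this to $z(x)=y(x)-L_1 x-\tfrac{L_2}{2}x^{2}$, collect terms, and substitute $y^{(\alpha)}(x)+y(x)=f(x)$ from equation \eqref{eq2}. The result is exactly the claimed right-hand side $F(x)$, which establishes \eqref{eqn222}. There is no substantive obstacle in this proof; the only issue to be careful about is ensuring the Caputo (as opposed to Riemann--Liouville) power rule is being invoked, which is valid here because $k=1,2$ are positive integers and the lower limit is $b=0$, so $D_{x}^{\alpha}x^{k}$ in the Caputo sense agrees with the formula above.
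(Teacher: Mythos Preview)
Your proof is correct and follows essentially the same approach as the paper: verify the three initial conditions by direct differentiation of \eqref{e27}, compute $z^{(\alpha)}(x)$ via linearity and the Caputo power rule, then add $z(x)$ and substitute $y^{(\alpha)}(x)+y(x)=f(x)$. The only difference is that you spell out the power rule explicitly, whereas the paper states the resulting formula for $z^{(\alpha)}(x)$ directly.
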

\begin{proof} $z(0)=y(0)=0$. By differentiating \eqref{e27}
$$z'(0)=y'(0)-L_1=0,\quad z''(0)=y''(0)-L_2=0.$$
The function $z(x)$ has fractional derivative of order $\aaaa$
$$z^{(\aaaa)}(x)=y^\aaaa(x)-\dfrac{L_1}{\Gamma (2-\aaaa)}x^{1-\aaaa}-
\dfrac{ L_2}{\Gamma (3-\aaaa)}x^{2-\aaaa}.$$
Then
$$z^{(\aaaa)}(x)+z(x)=y^{(\aaaa)}(x)+y(x)-L_1 x-\dddd{L_2}{2} x^2-\dfrac{L_1}{\Gamma (2-\aaaa)}x^{1-\aaaa}-\dfrac{ L_2}{\Gamma (3-\aaaa)}x^{2-\aaaa},$$
$$z^{(\aaaa)}(x)+z(x)=f(x)-L_1 x-\dddd{L_2}{2} x^2-\dfrac{L_1}{\Gamma (2-\aaaa)}x^{1-\aaaa}-\dfrac{L_2}{\Gamma (3-\aaaa)}x^{2-\aaaa}.$$
\end{proof}
Approximations \eqref{Recurrence3} and \eqref{Recurrence4} are second order numerical solutions of equation \eqref{eqn222}, because the function $z(x)$ satisfies the conditions of Lemma 7.
\subsection{Estimates for Gr\"unwald weights}
In section 4.1 we determined recurrence relations 
\eqref{Recurrence3}  and \eqref{Recurrence4} for second order numerical solutions of equation (2). In section 4.3 and section 5 we obtain a third order numerical solution of equation (2) and second order difference approximations for the fractional sub-diffusion equation.  In Theorem \ref{t7}, Theorem \ref{t14}  and Theorem \ref{t29} we discuss the convergence properties of the approximations. The proofs rely on the lower bound \eqref{Estimate} for the tail of the sum of Grunwald weights \eqref{series}.

The Gr\"unwald weights $w_n^{(\aaaa)}$ are computed recursively as
$$w_0^{(\aaaa)}=1,\quad w_1^{(\aaaa)}=-\aaaa,\quad w_2^{(\aaaa)}=\dfrac{\aaaa(\aaaa-1)}{2},$$
$$w_n^{(\aaaa)} =(-1)^n \binom{\aaaa}{n} =\llll( 1-\dfrac{\aaaa+1}{n}\rrrr) w_{n-1}^{(\aaaa)},$$
where $\llll\{w_n^{(\aaaa)}\rrrr\}_{n=1}^\infty$ is an increasing sequence of negative numbers with sum 
\begin{equation}\label{series}
\sum_{n=1}^\infty w_n^{(\aaaa)}=-1.
\end{equation}
The Gr\"unwald weights converge to zero with an asymptotic rate \cite{MeerschaertSikorskii2011} 
$$\llll|w_n^{(\aaaa)}\rrrr|\sim
 \dddd{\aaaa}{\Gamma(1-\aaaa)}\dddd{1}{n^{1+\aaaa}} \quad \text{ as }n\rightarrow \infty.$$
We determine bounds for the Gr\"unwald weights using the properties of the exponential function.
\begin{lem} \label{Inequalities} (Inequalities for the exponential function)

	(i) $1-x<e^{-x}$ for $0<x<1$;
	
	(ii) $1-x>e^{-x-x^2}$ for $0<x<2/3$.
\end{lem}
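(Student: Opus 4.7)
Both inequalities can be established by standard calculus, reducing each claim to studying an auxiliary function on the relevant interval, computing its value and sign changes, and invoking monotonicity. Part (i) is routine; the only real content is in part (ii), where the bound $x<2/3$ is not quite sharp enough for a one-line power-series argument, so a more careful derivative analysis is needed.

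For part (i), I would set $\vvvv(x)=e^{-x}-(1-x)$ and observe that $\vvvv(0)=0$ while $\vvvv'(x)=1-e^{-x}>0$ for all $x>0$. Hence $\vvvv$ is strictly increasing on $(0,\infty)$, so $\vvvv(x)>\vvvv(0)=0$ on $(0,1)$, which is exactly $1-x<e^{-x}$.

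For part (ii), taking logarithms (both sides are positive for $0<x<2/3$) converts the claim into $h(x):=x+x^2+\ln(1-x)>0$ on $(0,2/3)$. A direct computation gives
$$h'(x)=1+2x-\dfrac{1}{1-x}=\dfrac{x(1-2x)}{1-x},$$
so $h'>0$ on $(0,1/2)$ and $h'<0$ on $(1/2,1)$. Thus $h$ strictly increases from $h(0)=0$ up to its maximum at $x=1/2$ and then decreases. Since $h(0)=0$ and $h$ is strictly increasing on $(0,1/2]$, we already have $h(x)>0$ there; on $[1/2,2/3)$ the function is decreasing, so the worst point to check is the right endpoint.

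The one nontrivial step, which I consider the main obstacle, is verifying $h(2/3)\geq 0$, i.e.\ $10/9-\ln 3\geq 0$, equivalently $e^{10/9}\geq 3$. This is a concrete numerical inequality that I would justify cleanly by, for instance, noting $\ln 3<\ln e^{10/9}$ via the series estimate $\ln 3=\ln\tfrac{1}{1-2/3}=\sum_{n\geq 1}\tfrac{(2/3)^n}{n}$ and comparing with the elementary upper bound $\sum_{n\geq 1}(2/3)^n/n\leq 10/9$, or more directly by appealing to a known decimal value. Once $h(2/3)>0$ is in hand, monotonicity gives $h(x)\geq h(2/3)>0$ on $[1/2,2/3)$, completing the proof of $h>0$ on the whole interval $(0,2/3)$, and exponentiating recovers $1-x>e^{-x-x^2}$.
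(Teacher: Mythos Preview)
Your argument for part~(i) is identical to the paper's. For part~(ii) your approach is correct but genuinely different: you analyse $h(x)=x+x^{2}+\ln(1-x)$ via its derivative $h'(x)=x(1-2x)/(1-x)$, observe that $h$ is unimodal with a maximum at $x=1/2$, and reduce everything to the single endpoint check $h(2/3)=10/9-\ln 3>0$. The paper instead expands $\ln(1-x)$ as a Maclaurin series, rewrites the inequality as $\tfrac{1}{2}>\sum_{n\geq 3}x^{n-2}/n$, and bounds the right-hand side term by term at $x=2/3$, arriving at the explicit estimate $\tfrac{1591}{3645}<\tfrac{1}{2}$. Your route is structurally cleaner and makes transparent why $2/3$ is near the sharp threshold; the paper's is fully self-contained at the numerical step. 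The two in fact meet at the same point: the series inequality the paper establishes is algebraically equivalent to $\ln 3<10/9$, so when you propose to verify the endpoint ``via the series estimate'' you are invoking exactly the paper's computation. One small caution for a finished write-up: the phrase ``elementary upper bound $\sum_{n\geq 1}(2/3)^{n}/n\leq 10/9$'' is the statement to be proved rather than an independent input, so you should either carry out a truncation-plus-geometric-tail estimate (as the paper does) or commit explicitly to the decimal value $\ln 3\approx 1.0986$.
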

\begin{proof} Let $h(x)=e^{-x}+x-1$. The function $h(x)$ has a positive derivative.
$$h'(x)=1-e^{-x}> 1-e^0=0.$$
Therefore, 
$$h(x)=e^{-x}+x-1>h(0)=0.$$
Now we prove  (ii). By taking logarithm from both sides
$$\ln(1-x)>-x-x^2.$$
The function $\ln(1-x)$ has Maclaurin series 
$$\ln (1-x)=-\sum_{n=1}^\infty \dddd{x^n}{n}=-x-\dddd{x^2}{2}-\sum_{n=3}^\infty \dddd{x^n}{n}.$$
Inequality (ii) is equivalent to
$$\dddd{x^2}{2}>\sum_{n=3}^\infty \dddd{x^n}{n},$$
$$\dddd{1}{2}>\dddd{x}{3}+\dddd{x^2}{4}+\dddd{x^3}{5}+\sum_{n=6}^\infty \dddd{x^{n-2}}{n}.$$
We have that $0<x<2/3$. Then
\begin{equation*}
\begin{aligned}
\dddd{x}{3}+\dddd{x^2}{4}+\dddd{x^3}{5}&+\sum_{n=6}^\infty \dddd{x^{n-2}}{n}<\dddd{1}{3}\llll(\dddd{2}{3}\rrrr)+\dddd{1}{4}\llll(\dddd{2}{3} \rrrr)^2+\dddd{1}{5}\llll(\dddd{2}{3} \rrrr)^3+\sum_{n=6}^\infty \dddd{1}{n}\llll(\dddd{2}{3} \rrrr)^{n-2}\\
&<\dddd{53}{135}+\dddd{1}{6}\llll(\dddd{2}{3} \rrrr)^4\sum_{n=6}^\infty \dddd{6}{n}\llll(\dddd{2}{3} \rrrr)^{n-6}<\dfrac{53}{135}+\dfrac{32}{2187}\sum_{n=6}^\infty \llll(\dddd{2}{3} \rrrr)^{n-6},\\
\dddd{x}{3}+\dddd{x^2}{4}+\dddd{x^3}{5}&+\sum_{n=6}^\infty \dddd{x^{n-2}}{n}<\dfrac{53}{135}+\dfrac{96}{2187}=\dfrac{1591}{3645}<\dddd{1}{2}.
\end{aligned}
\end{equation*}
\end{proof}
In the next two lemmas we determine upper and lower bounds for $\llll|w_n^{(\aaaa)}\rrrr|$ and $\sum_{k=n}^\infty \llll|w_k^{(\aaaa)}\rrrr|$.
\begin{lem} (Estimates for Gr\"unwald weights)
$$e^{-(\aaaa+1)^2\llll( \frac{\pi^2}{6}-\frac{5}{4}\rrrr)}  \dfrac{ \aaaa (1-\aaaa) 2^{\aaaa}}{n^{\aaaa+1}}<\llll|w_n^{(\aaaa)}\rrrr| <\dfrac{\aaaa 2^{\aaaa+1}}{(n+1)^{\aaaa+1}}.$$
\end{lem}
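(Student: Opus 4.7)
The plan is to express $|w_n^{(\aaaa)}|$ as a finite product via the recurrence $w_n^{(\aaaa)} = (1-(\aaaa+1)/n) w_{n-1}^{(\aaaa)}$ and then apply the two inequalities from Lemma \ref{Inequalities} termwise, using elementary integral comparisons to convert the resulting sums $\sum 1/k$ and $\sum 1/k^2$ into the logarithmic and constant factors that appear in the claimed bounds. Because $0<\aaaa<1$, every factor $1-(\aaaa+1)/k$ with $k\geq 2$ is strictly positive, so starting from $w_1^{(\aaaa)}=-\aaaa$ I get
$$|w_n^{(\aaaa)}|=\aaaa\prod_{k=2}^{n}\llll(1-\dddd{\aaaa+1}{k}\rrrr).$$

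For the upper bound I apply Lemma \ref{Inequalities}(i) to each factor (valid since $(\aaaa+1)/k<1$ for $k\geq 2$) to obtain $|w_n^{(\aaaa)}|<\aaaa\exp(-(\aaaa+1)\sum_{k=2}^n 1/k)$. A left-endpoint integral comparison gives $\sum_{k=2}^n 1/k>\int_2^{n+1}dx/x=\ln((n+1)/2)$, which immediately produces
$$|w_n^{(\aaaa)}|<\aaaa\llll(\dddd{2}{n+1}\rrrr)^{\aaaa+1}=\dddd{\aaaa\cdot 2^{\aaaa+1}}{(n+1)^{\aaaa+1}}.$$

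The lower bound is the more delicate half, since Lemma \ref{Inequalities}(ii) requires $x<2/3$, and the factor at $k=2$ has $(\aaaa+1)/2$, which can exceed $2/3$ when $\aaaa>1/3$. My plan is to peel this factor off explicitly, writing
$$|w_n^{(\aaaa)}|=\dddd{\aaaa(1-\aaaa)}{2}\prod_{k=3}^{n}\llll(1-\dddd{\aaaa+1}{k}\rrrr),$$
and then apply (ii) only for $k\geq 3$, where $(\aaaa+1)/k\leq (\aaaa+1)/3<2/3$ holds for all $0<\aaaa<1$. This yields
$$|w_n^{(\aaaa)}|>\dddd{\aaaa(1-\aaaa)}{2}\exp\llll(-(\aaaa+1)\sum_{k=3}^n\dddd{1}{k}-(\aaaa+1)^2\sum_{k=3}^n\dddd{1}{k^2}\rrrr).$$
Now a right-endpoint integral comparison gives $\sum_{k=3}^n 1/k<\int_2^n dx/x=\ln(n/2)$, and the second sum is bounded by $\sum_{k=3}^\infty 1/k^2=\pi^2/6-1-1/4=\pi^2/6-5/4$. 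Substituting both estimates collapses the exponential to $(2/n)^{\aaaa+1} e^{-(\aaaa+1)^2(\pi^2/6-5/4)}$, and combining with the prefactor $\aaaa(1-\aaaa)/2$ gives exactly $\aaaa(1-\aaaa)2^{\aaaa}n^{-\aaaa-1}e^{-(\aaaa+1)^2(\pi^2/6-5/4)}$.

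The only real obstacle is choosing the right splitting point: Lemma \ref{Inequalities}(ii) fails at $k=2$ for large $\aaaa$, and one must notice that the $k=2$ factor equals $(1-\aaaa)/2$, which is precisely the source of the $(1-\aaaa)$ appearing in the stated lower bound. Once the product is split this way, and one is careful that the two integral comparisons go in opposite directions (left endpoint for the upper bound so that $\sum 1/k$ is bounded \emph{below}, right endpoint for the lower bound so that it is bounded \emph{above}), the rest is mechanical.
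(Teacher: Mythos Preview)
Your proposal is correct and follows essentially the same approach as the paper: both arguments express $|w_n^{(\aaaa)}|$ via the recursive product, apply Lemma~\ref{Inequalities}(i) from $k=2$ for the upper bound and Lemma~\ref{Inequalities}(ii) from $k=3$ for the lower bound, and use the same integral comparisons $\sum_{k=2}^n 1/k>\ln((n+1)/2)$, $\sum_{k=3}^n 1/k<\ln(n/2)$ together with $\sum_{k=3}^\infty 1/k^2=\pi^2/6-5/4$. Your explanation of \emph{why} the lower-bound product must start at $k=3$ (the hypothesis $x<2/3$ of Lemma~\ref{Inequalities}(ii) fails at $k=2$) is in fact more explicit than the paper's.
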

\begin{proof} We determine  bounds for the Gr\"unwald weights from the recursive  formula  and the inequalities for the exponential function.
$$\llll|w_n^{(\aaaa)}\rrrr|=\llll( 1-\dfrac{\aaaa+1}{n}\rrrr) \llll|w_{n-1}^{(\aaaa)} \rrrr|< e^{-\frac{\aaaa+1}{n}}\llll| w_{n-1}^{(\aaaa)} \rrrr|,$$
$$\llll|w_n^{(\aaaa)}\rrrr|< e^{-\frac{\aaaa+1}{n}}\llll| w_{n-1}^{(\aaaa)} \rrrr|
<e^{-\frac{\aaaa+1}{n}}e^{-\frac{\aaaa+1}{n-1}}\llll| w_{n-2}^{(\aaaa)} \rrrr|, $$
$$\llll|w_n^{(\aaaa)}\rrrr|< e^{-\frac{\aaaa+1}{n}}e^{-\frac{\aaaa+1}{n-1}}\cdots e^{-\frac{\aaaa+1}{2}}\llll| w_{1}^{(\aaaa)} \rrrr|=\aaaa e^{-(\aaaa+1)\sum_{k=2}^{n}\frac{1}{k}}. $$
The function $1/x$ is  decreasing for $x\geq 0$. Then
$$\sum_{k=2}^{n}\dfrac{1}{k}>\int_2^{n+1}\dfrac{1}{x}dx=\ln (n+1)-\ln 2$$
and
$$\llll|w_n^{(\aaaa)}\rrrr|< \aaaa e^{-(\aaaa+1)\sum_{k=2}^{n}\frac{1}{k}} <
\aaaa e^{-(\aaaa+1)(\ln (n+1)-\ln 2)}=\dfrac{\aaaa 2^{\aaaa+1}}{(n+1)^{\aaaa+1}}.$$
Now we use the inequality for the exponential function from  Lemma \ref{Inequalities}(ii) to determine a lower bound for the Gr\"unwald weights.
$$\llll|w_n^{(\aaaa)}\rrrr|=\llll( 1-\frac{\aaaa+1}{n}\rrrr) \llll|w_{n-1}^{(\aaaa)} \rrrr|> e^{-\frac{\aaaa+1}{n}-\llll(\frac{\aaaa+1}{n}\rrrr)^2}\llll| w_{n-1}^{(\aaaa)} \rrrr|,$$
$$\llll|w_n^{(\aaaa)}\rrrr| > e^{-\frac{\aaaa+1}{n}-\llll(\frac{\aaaa+1}{n}\rrrr)^2}\llll| w_{n-1}^{(\aaaa)} \rrrr|
>e^{-\frac{\aaaa+1}{n}-\llll(\frac{\aaaa+1}{n}\rrrr)^2}e^{-\frac{\aaaa+1}{n-1}-\llll(\frac{\aaaa+1}{n-1}\rrrr)^2}\llll| w_{n-2}^{(\aaaa)} \rrrr|, $$
$$\llll|w_n^{(\aaaa)}\rrrr| > e^{-\frac{\aaaa+1}{n}-\llll(\frac{\aaaa+1}{n}\rrrr)^2}e^{-\frac{\aaaa+1}{n-1}-\llll(\frac{\aaaa+1}{n-1}\rrrr)^2}\cdots e^{-\frac{\aaaa+1}{3}-\llll(\frac{\aaaa+1}{3}\rrrr)^2}\llll| w_{2}^{(\aaaa)} \rrrr|, $$
$$\llll|w_n^{(\aaaa)}\rrrr| > \frac{\aaaa (1-\aaaa)}{2} e^{-(\aaaa+1)\sum_{k=3}^{n}\frac{1}{k}-(\aaaa+1)^2\sum_{k=3}^{n}\frac{1}{k^2}}, $$
$$\llll|w_n^{(\aaaa)}\rrrr| > \frac{\aaaa (1-\aaaa)}{2} e^{-(\aaaa+1)\sum_{k=3}^{n}\frac{1}{k}}e^{-(\aaaa+1)^2\sum_{k=3}^{\infty}\frac{1}{k^2}}. $$
We have that \cite{GradshteynRyzhik2007}
$$\sum_{k=3}^{\infty}\dfrac{1}{k^2}=\sum_{k=1}^{\infty}\dfrac{1}{k^2}-1-\dddd{1}{4}=\dddd{\pi^2}{6}-\dddd{5}{4}.$$
Then
$$\llll|w_n^{(\aaaa)}\rrrr| > \frac{\aaaa (1-\aaaa)}{2} e^{-(\aaaa+1)\sum_{k=3}^{n}\frac{1}{k}}e^{-(\aaaa+1)^2\llll( \frac{\pi^2}{6}-\frac{5}{4}\rrrr)}. $$
The function $1/x$ is  decreasing  for $x\geq 0$. Then
$$\sum_{k=3}^{n}\dfrac{1}{k}<\int_2^{n}\dfrac{1}{x}dx=\ln n-\ln 2.$$
Hence,
$$\llll|w_n^{(\aaaa)}\rrrr|> \dfrac{\aaaa (1-\aaaa)}{2}e^{-(\aaaa+1)^2\llll( \frac{\pi^2}{6}-\frac{5}{4}\rrrr)}   e^{-(\aaaa+1)(\ln n -\ln 2)},$$
$$\llll|w_n^{(\aaaa)}\rrrr|>e^{-(\aaaa+1)^2\llll( \frac{\pi^2}{6}-\frac{5}{4}\rrrr)}  \dfrac{ \aaaa (1-\aaaa) 2^{\aaaa}}{n^{\aaaa+1}}.
$$
\end{proof}
We use the estimates for the Gr\"unwald weights to determine bounds for the tail of \eqref{series}.
\begin{lem} \label{l12}(Bounds for sums of Gr\"unwald weights)
\begin{equation} \label{Estimate}
  \dfrac{1-\aaaa}{5} \llll(\dfrac{ 2}{ n}\rrrr)^\aaaa<\sum_{k=n}^\infty \llll|w_k^{(\aaaa)}\rrrr| <2\llll(\dfrac{ 2}{ n}\rrrr)^\aaaa.
\end{equation}
\end{lem}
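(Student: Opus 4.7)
The plan is to deduce the bounds termwise from the two-sided estimate for $\bigl|w_k^{(\aaaa)}\bigr|$ proved in the previous lemma, and then replace the resulting sums by integrals of $x^{-(\aaaa+1)}$.

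For the upper bound, I would use $\bigl|w_k^{(\aaaa)}\bigr|<\aaaa\,2^{\aaaa+1}/(k+1)^{\aaaa+1}$. Reindexing by $\ell=k+1$ and exploiting the fact that $x\mapsto x^{-(\aaaa+1)}$ is decreasing, one gets
$$\sum_{k=n}^{\infty}\frac{1}{(k+1)^{\aaaa+1}}=\sum_{\ell=n+1}^{\infty}\frac{1}{\ell^{\aaaa+1}}\le \int_{n}^{\infty}\frac{dx}{x^{\aaaa+1}}=\frac{1}{\aaaa\,n^{\aaaa}}.$$
Multiplying by $\aaaa\,2^{\aaaa+1}$ produces the claimed upper bound $2\,(2/n)^{\aaaa}$; the factors of $\aaaa$ cancel and $2\cdot 2^{\aaaa}=2^{\aaaa+1}$ falls out cleanly.

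For the lower bound, apply the lower bound from the previous lemma and use the opposite integral comparison, namely $\sum_{k=n}^{\infty}k^{-(\aaaa+1)}\ge\int_{n}^{\infty}x^{-(\aaaa+1)}\,dx=1/(\aaaa n^{\aaaa})$, which is valid because each term $k^{-(\aaaa+1)}$ dominates $\int_{k}^{k+1}x^{-(\aaaa+1)}\,dx$. After the $\aaaa$ in the numerator cancels the $1/\aaaa$ from the integral, I obtain
$$\sum_{k=n}^{\infty}\bigl|w_k^{(\aaaa)}\bigr|>e^{-(\aaaa+1)^{2}\bigl(\frac{\pi^2}{6}-\frac{5}{4}\bigr)}(1-\aaaa)\Bigl(\frac{2}{n}\Bigr)^{\aaaa}.$$
It remains to show the prefactor exceeds $1/5$, uniformly in $\aaaa\in(0,1)$. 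Since $0<\aaaa<1$ gives $(\aaaa+1)^{2}<4$, and numerically $\pi^{2}/6-5/4<0.395$, the exponent is at most $4\cdot 0.395=1.58$, so $e^{-1.58}>0.206>1/5$. This is the main (and only real) obstacle: a plain numerical verification.

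I expect the write-up to be short, with the only subtlety being the direction of the integral comparison in the two cases (lower Riemann sum for the upper estimate, upper Riemann sum for the lower estimate) and the uniform numerical bound on the exponential prefactor.
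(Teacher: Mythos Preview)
Your proposal is correct and follows essentially the same route as the paper: apply the termwise bounds from the preceding lemma, compare the resulting sums $\sum_{k\ge n}(k+1)^{-(\aaaa+1)}$ and $\sum_{k\ge n}k^{-(\aaaa+1)}$ with $\int_n^\infty x^{-(\aaaa+1)}\,dx=1/(\aaaa n^{\aaaa})$ from below and above respectively, and finish the lower bound with the numerical estimate $e^{-4(\pi^2/6-5/4)}>1/5$. The paper's proof is line-for-line the same argument.
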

\begin{proof} From Lemma 11 we have
 $$\sum_{k=n}^\infty e^{-(\aaaa+1)^2\llll( \frac{\pi^2}{6}-\frac{5}{4}\rrrr)}  \dfrac{ \aaaa (1-\aaaa) 2^{\aaaa}}{k^{\aaaa+1}}<\sum_{k=n}^\infty \llll|w_k^{(\aaaa)}\rrrr| <
\sum_{k=n}^\infty \dfrac{\aaaa 2^{\aaaa+1}}{(k+1)^{\aaaa+1}}.$$
The function $1/x^{\aaaa+1}$ is a decreasing for $x\geq 0$ and we have:
$$\sum_{k=n}^\infty \dfrac{1}{(k+1)^{\aaaa+1}}<\int_n^\infty \dfrac{1}{x^{\aaaa+1}}dx<\sum_{k=n}^\infty \dfrac{1}{k^{\aaaa+1}},$$
$$\sum_{k=n}^\infty \dfrac{1}{(k+1)^{\aaaa+1}}<\dfrac{1}{\aaaa n^\aaaa}<\sum_{k=n}^\infty \dfrac{1}{k^{\aaaa+1}}.$$
Hence,
\begin{equation*} 
e^{-(\aaaa+1)^2\llll( \frac{\pi^2}{6}-\frac{5}{4}\rrrr)}  \dfrac{  (1-\aaaa) 2^{\aaaa}}{n^{\aaaa}}<\sum_{k=n}^\infty \llll|w_k^{(\aaaa)}\rrrr| <\dfrac{ 2^{\aaaa+1}}{n^{\aaaa}}.
\end{equation*}
The number $\aaaa$ is between zero and one. Then
$$e^{-(\aaaa+1)^2\llll( \frac{\pi^2}{6}-\frac{5}{4}\rrrr)}>e^{-4\llll( \frac{\pi^2}{6}-\frac{5}{4}\rrrr)}>\dddd{1}{5}.$$
\end{proof}
\subsection{Third-order approximation}
In the present section we use approximation \eqref{ThirdOrder} to determine recurrence relations for a third order approximation for equation (2), when the solution is sufficiently  differentiable function. In section 4.2 we determined an estimate for sums of Gr\"unwald weights.  In Lemma \ref{ll13} and Theorem \ref{t14} we use the lower bound to prove that the approximation converges to the solution with  accuracy $O\llll( h^3 \rrrr)$, when the solution satisfies the conditions of Corollary \ref{c5}.
$$h^{-\aaaa}\Delta_h^\aaaa y_n=\llll(\dddd{a^2}{8}-\dddd{5a}{24}\rrrr)y^{(\aaaa)}_{n-2}+\llll(\dddd{11a}{12}-\dddd{a^2}{4}\rrrr)y^{(\aaaa)}_{n-1}+\llll(1-\dddd{17a}{4}+\dddd{a^2}{8}\rrrr)y^{(\aaaa)}_{n}+O\llll( h^3\rrrr),$$
\begin{equation*}
\begin{aligned}
\Delta_h^\aaaa y_n=h^{\aaaa}\llll(\dddd{a^2}{8}-\dddd{5a}{24}\rrrr)(f_{n-2}-y_{n-2})&+h^{\aaaa}\llll(\dddd{11a}{12}-\dddd{a^2}{4}\rrrr)(f_{n-1}-y_{n-1})+\\
&h^{\aaaa}\llll(1-\dddd{17a}{4}+\dddd{a^2}{8}\rrrr)(f_{n}-y_{n})+O\llll( h^{3+\aaaa}\rrrr).
\end{aligned}
\end{equation*}
We have that
$$\Delta_h^\aaaa y_n=\sum_{k=0}^{n} w_k^{(\aaaa)} y_{n-k}=y_n+\sum_{k=1}^{n} w_k^{(\aaaa)} y_{n-k}.$$
Let
$$\gamma=1+h^\aaaa\llll(1-\dddd{17\aaaa}{24}+\dddd{a^2}{8}\rrrr).$$
The solution of equation (2) satisfies
\begin{align*}
 y_n=\gamma^{-1}&
\llll[ h^{\aaaa}\llll(\dddd{11a}{12}-\dddd{a^2}{4}\rrrr)\rrrr.
(f_{n-1}-y_{n-1})+ h^{\aaaa}\llll(\dddd{a^2}{8}-\dddd{5a}{24}\rrrr)(f_{n-2}-y_{n-2}) \\
&+\llll. h^{\aaaa}\llll(1-\dddd{17a}{4}+\dddd{a^2}{8}\rrrr)f_{n}-\sum_{k=1}^{n} w_k^{(\aaaa)} y_{n-k}\rrrr]+O\llll( h^{3+\aaaa}\rrrr).
\end{align*}
We compute a numerical solution $\wwww{y}_n$ with $\wwww{y}_0=\wwww{y}_1=0$ and
\begin{align}
 \nonumber \wwww{y}_n=\gamma^{-1}\llll[ 
h^{\aaaa}\llll(\dddd{11a}{12}-\dddd{a^2}{4}\rrrr)\rrrr.&(f_{n-1}-\wwww{y}_{n-1})+ h^{\aaaa}\llll(\dddd{a^2}{8}-\dddd{5a}{24}\rrrr)(f_{n-2}-\wwww{y}_{n-2})\\
&+\llll. h^{\aaaa}\llll(1-\dddd{17a}{4}+\dddd{a^2}{8}\rrrr)f_{n}-\sum_{k=1}^{n} w_k^{(\aaaa)} \wwww{y}_{n-k}\rrrr]. \label{3rdOrder}
\end{align}
Let $e_n=y_n-\wwww {y}_n$ be the error of approximation $\eqref{3rdOrder}$. The numbers $e_n$ satisfy
\begin{align*}
 e_n=-\gamma^{-1}\llll[ h^{\aaaa}\llll(\dddd{11a}{12}-\dddd{a^2}{4}\rrrr)e_{n-1}+ h^{\aaaa}\llll(\dddd{a^2}{8}-\dddd{5a}{24}\rrrr)e_{n-2}+ \sum_{k=1}^{n} w_k^{(\aaaa)} e_{n-k}\rrrr] +A_n,
\end{align*}
where $A_n$ is the truncation error of approximation \eqref{3rdOrder}. The numbers $A_n$ satisfy
$$|A_n|<A h^{3+\aaaa},$$
where $A$ is a constant such that
$$A>\dddd{(1-\aaaa)2^{\aaaa}D_{3+\aaaa}}{10\Gamma{(4+\aaaa)}},$$
and 
$$D_{3+\aaaa}=\max_{0\leq x\leq1}\llll|y^{(3+\aaaa)}(x)\rrrr|.$$
We can represent the recurrence relations for the errors $e_n$ as
\begin{align*}\nonumber
 e_n=\gamma^{-1}\llll[ \llll( \aaaa-h^{\aaaa}\llll(\dddd{11a}{12}-\dddd{a^2}{4}\rrrr)\rrrr)\rrrr. e_{n-1}+&\llll(\dddd{\aaaa(1-\aaaa)}{2}+ h^{\aaaa}\llll(\dddd{5a}{24}-\dddd{a^2}{8}\rrrr)\rrrr)e_{n-2}\\
& -\llll.\sum_{k=3}^{n} w_k^{(\aaaa)} e_{n-k}\rrrr]+A_n, 
\end{align*}
\begin{align}
 e_n=\gamma^{-1}\llll(\gamma_1 e_{n-1}+\gamma_2 e_{n-2}+
\sum_{k=3}^{n} \gamma_k e_{n-k}\rrrr)+A_n, \label{error2}
\end{align}
where
$$\gamma_1= \aaaa-h^{\aaaa}\llll(\dddd{11a}{12}-\dddd{a^2}{4}\rrrr),
\gamma_2=\dddd{\aaaa(1-\aaaa)}{2}+ h^{\aaaa}\llll(\dddd{5a}{24}-\dddd{a^2}{8}\rrrr),
\gamma_k=-w_k^{(\aaaa)} (k\geq3).
$$
The numbers $\gamma_n$ are positive, because $h^\aaaa<1$ and $0<\aaaa<1$.
\begin{lem}\label{ll13}
	Suppose that equation \eqref{eq2} has sufficiently differentiable solution on the interval $[0,T]$ and $y(0)=y'(0)=y''(0)=y'''(0)=0$. Then
\begin{align}\label{l13}
|e_n|<\llll(\dddd{10A}{ (1-\aaaa)2^{\aaaa}}\rrrr)n^\aaaa h^{3+\aaaa}.
\end{align}
\end{lem}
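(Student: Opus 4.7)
The plan is to proceed by strong induction on $n$ with inductive constant $C := 10A/((1-\aaaa) 2^{\aaaa})$. The base cases $n = 0, 1$ are immediate from $\wwww{y}_0 = \wwww{y}_1 = 0$: clearly $e_0 = 0$, while $e_1 = y(h) = O(h^4) \le C h^{3+\aaaa}$ by Taylor's theorem (using $y(0) = y'(0) = y''(0) = y'''(0) = 0$), after possibly enlarging $A$ beyond the threshold specified in the setup (which the lemma's hypothesis permits).

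For the inductive step, assume $|e_j| \le C j^{\aaaa} h^{3+\aaaa}$ for all $j < n$. Applying the triangle inequality to \eqref{error2}, the positivity of the coefficients $\gamma_1, \gamma_2, \gamma_3, \dots$ (noted right after \eqref{error2}), and the crude bound $(n-k)^{\aaaa} \le n^{\aaaa}$ give
$$|e_n| \le \gamma^{-1}\left(\gamma_1 + \gamma_2 + \sum_{k=3}^n |w_k^{(\aaaa)}|\right) C n^{\aaaa} h^{3+\aaaa} + A h^{3+\aaaa}.$$
The key reduction is a compact algebraic identity: using $|w_1^{(\aaaa)}| = \aaaa$, $|w_2^{(\aaaa)}| = \aaaa(1-\aaaa)/2$, $\sum_{k=1}^\infty |w_k^{(\aaaa)}| = 1$, and the explicit forms of $\gamma, \gamma_1, \gamma_2$, one verifies the cancellation
$$\gamma_1 + \gamma_2 + \sum_{k=3}^n |w_k^{(\aaaa)}| = \gamma - T_n - h^{\aaaa}, \qquad T_n := \sum_{k=n+1}^\infty |w_k^{(\aaaa)}|.$$
Substituting, the inequality becomes $|e_n| \le C n^{\aaaa} h^{3+\aaaa} - \gamma^{-1}(T_n + h^{\aaaa}) C n^{\aaaa} h^{3+\aaaa} + A h^{3+\aaaa}$, so the induction closes provided $C n^{\aaaa} T_n + C n^{\aaaa} h^{\aaaa} \ge A \gamma$.

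The closing estimate combines two inputs. Lemma \ref{l12} applied at index $n+1$, together with the elementary $(n/(n+1))^{\aaaa} \ge 1/2$ for $n \ge 1$ and $\aaaa \in (0,1]$, yields $n^{\aaaa} T_n > (1-\aaaa) 2^{\aaaa}/10$, so $C n^{\aaaa} T_n > A$ by the choice of $C$. The remaining deficit $A(\gamma - 1) = A h^{\aaaa}(1 - 17\aaaa/24 + \aaaa^2/8)$ is absorbed by $C n^{\aaaa} h^{\aaaa}$ since $C n^{\aaaa} \ge C \ge 10 A > A$ (using that $(1-\aaaa) 2^{\aaaa} \le 1$ on $[0, 1)$). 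The main obstacle is quantitative rather than structural: because $\gamma > 1$, a naive estimate $\gamma^{-1} A < A$ would fail to close the induction. What saves the argument is the $h^{\aaaa}$ slack retained in the algebraic identity together with the sharpness of Lemma \ref{l12}, with the factor $10$ in the definition of $C$ calibrated precisely to absorb the loss $(1/2)^{\aaaa}$ in the bound $(n/(n+1))^{\aaaa} \ge 1/2$.
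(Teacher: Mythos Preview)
Your proposal is correct and follows essentially the same strategy as the paper's proof: strong induction on $n$, the algebraic identity expressing $\sum_k \gamma_k$ in terms of $\gamma$, $h^\aaaa$, and a tail of Gr\"unwald weights, and the lower bound from Lemma~\ref{l12} to close the induction. The bookkeeping differs slightly: the paper sums only to $n-1$ (since $e_0=0$), obtaining the tail $\sum_{k\ge n}|w_k^{(\aaaa)}|$, then drops the $h^\aaaa$ term and uses $\gamma^{-1}>1/2$ on the tail so that Lemma~\ref{l12} applies directly at index $n$ without any $(n/(n+1))^\aaaa$ correction; you instead carry the $h^\aaaa$ term to absorb $\gamma-1$ and lose a factor $1/2$ via $(n/(n+1))^\aaaa\ge 1/2$. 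Both routes land on the same constant $10A/((1-\aaaa)2^\aaaa)$. For the base case $n=1$ the paper invokes the fractional Generalized Mean Value Theorem (giving $|y(h)|\le D_{3+\aaaa}h^{3+\aaaa}/\Gamma(4+\aaaa)$, which matches the stated constraint on $A$ exactly), whereas your classical Taylor argument gives $O(h^4)$ and requires the harmless enlargement of $A$ you mention.
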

	\begin{proof} We prove \eqref{l13} by induction on $n$. 
	$$|e_0|=|y_0-\wwww{y}_0|=0.$$
	From the Generalized Mean Value Theorem
	$$e_1=y_1-\wwww{y}_1=y(h)=\dddd{y^{(3+\aaaa)}(\xi)}{\Gamma(4+\aaaa)}h^{3+\aaaa},$$
	$$| e_1 |<\dddd{\llll|y^{(3+\aaaa)}(\xi)\rrrr|}{\Gamma(4+\aaaa)}h^{3+\aaaa}
	\leq \dddd{D_{3+\aaaa}}{\Gamma(4+\aaaa)}h^{3+\aaaa}<
	\dddd{10A}{(1-\aaaa)2^{\aaaa}}h^{3+\aaaa}.$$
	Suppose that \eqref{l13} holds for all $n \leq \oooo{n}-1$.
	\begin{align*}
 |e_{\oooo{n}}|\leq\gamma^{-1}\llll(\gamma_1 |e_{\oooo{n}-1}|+\gamma_2 |e_{\oooo{n}-2}|+
\sum_{k=3}^{\oooo{n}-1} \gamma_k |e_{\oooo{n}-k}|\rrrr) +|A_{\oooo{n}}|.
\end{align*}
By the induction hypothesis
$$|e_{\oooo{n}-k}|<\dddd{10A}{(1-\aaaa)2^{\aaaa}}(\oooo{n}-k)^\aaaa h^{3+\aaaa}<\dddd{10}{(1-\aaaa)2^{\aaaa}} \oooo{n}^\aaaa h^{3+\aaaa}\quad(k=1,\cdots,\oooo{n}-1).$$
Then
\begin{align}\label{L13_1}
|e_{\oooo{n}}|< \dddd{10A\gamma^{-1}}{(1-\aaaa)2^{\aaaa}}  \oooo{n}^\aaaa h^{3+\aaaa}
\sum_{k=1}^{\oooo{n}-1} \gamma_k  +A h^{3+\aaaa}.
\end{align}
We have that
$$\sum_{k=1}^{\oooo{n}-1} \gamma_k=-h^{\aaaa}\llll(\dddd{11a}{12}-\dddd{a^2}{4}\rrrr)+ h^{\aaaa}\llll(\dddd{5a}{24}-\dddd{a^2}{8}\rrrr)+\sum_{k=1}^{\oooo{n}-1} \llll|w_k^{(\aaaa)}\rrrr|,$$
$$\sum_{k=1}^{\oooo{n}-1} \gamma_k=h^{\aaaa}\llll(\dddd{a^2}{8}-\dddd{17a}{24}\rrrr)+\sum_{k=1}^{\infty} \llll|w_k^{(\aaaa)}\rrrr|-\sum_{k=\oooo{n}}^{\infty} \llll|w_k^{(\aaaa)}\rrrr|,$$
$$\sum_{k=1}^{\oooo{n}-1} \gamma_k=1+h^{\aaaa}\llll(\dddd{a^2}{8}-\dddd{17a}{24}\rrrr)-\sum_{k=\oooo{n}}^{\infty} \llll|w_k^{(\aaaa)}\rrrr|=\gamma-h^\aaaa-
\sum_{k=\oooo{n}}^{\infty} \llll|w_k^{(\aaaa)}\rrrr|.$$
Hence,
\begin{align}\label{L13_2}
\sum_{k=1}^{\oooo{n}-1} \gamma_k < \gamma-
\sum_{k=\oooo{n}}^{\infty} \llll|w_k^{(\aaaa)}\rrrr|.
\end{align}
From \eqref{L13_1} and \eqref{L13_2},
$$|e_{\oooo{n}}|< \dddd{10A \gamma^{-1}}{(1-\aaaa)2^{\aaaa}} \oooo{n}^\aaaa h^{3+\aaaa}\llll(
\gamma-\sum_{k=\oooo{n}}^{\infty} \llll|w_k^{(\aaaa)}\rrrr|
\rrrr)  +A h^{3+\aaaa}.$$
The number $\gamma=1+h^\aaaa\llll(1-\frac{17\aaaa}{24}+\frac{a^2}{8}\rrrr)$ satisfies
$$1<\gamma<2,$$
because $h^\aaaa<1$ and
$$1-\frac{17\aaaa}{24}+\frac{a^2}{8}>1-\frac{17}{24}>\frac{7}{24}>0,$$ 
$$1-\frac{17\aaaa}{24}+\frac{a^2}{8}<1-\frac{17\aaaa}{24}+\frac{17a^2}{24}<1.$$ 
Then 
$$\dddd{1}{2}<\gamma^{-1}<1$$
and
$$|e_{\oooo{n}}|\leq \dddd{10A}{(1-\aaaa)2^{\aaaa}} \oooo{n}^\aaaa h^{3+\aaaa}\llll(
1-\gamma^{-1}\sum_{k=\oooo{n}}^{\infty} \llll|w_k^{(\aaaa)}\rrrr|
\rrrr)  +A h^{3+\aaaa},$$
$$|e_{\oooo{n}}|< \dddd{10A}{(1-\aaaa)2^{\aaaa}} \oooo{n}^\aaaa h^{3+\aaaa}-\dddd{5A}{(1-\aaaa)2^{\aaaa}}  \oooo{n}^\aaaa h^{3+\aaaa}\sum_{k=\oooo{n}}^{\infty} \llll|w_k^{(\aaaa)}\rrrr|
 +A h^{3+\aaaa}.$$
In Lemma 12 we showed that
$$\sum_{k=\oooo{n}}^\infty \llll|w_k^{(\aaaa)}\rrrr| >\dddd{1-\aaaa}{5}\llll(\dfrac{ 2}{ \oooo{n}}\rrrr)^\aaaa.$$
Hence,
$$|e_{\oooo{n}}|< \dddd{10A}{ (1-\aaaa)2^{\aaaa}}  \oooo{n}^\aaaa h^{3+\aaaa}-\dddd{5A}{ (1-\aaaa) 2^{\aaaa}} \dddd{1-\aaaa}{5}\llll(\dfrac{ 2}{ \oooo{n}}\rrrr)^\aaaa   \oooo{n}^\aaaa h^{3+\aaaa}+A h^{3+\aaaa},$$
$$|e_{\oooo{n}}|< \dddd{10A}{(1-\aaaa) 2^{\aaaa}}  \oooo{n}^\aaaa h^{3+\aaaa}-A h^{3+\aaaa}+A h^{3+\aaaa}=\dddd{10A}{(1-\aaaa) 2^{\aaaa}} \oooo{n}^\aaaa h^{3+\aaaa}.$$
	\end{proof}
	In the next theorem we determine the order of approximation \eqref{3rdOrder} on the interval $[0,T]$.
\begin{thm}\label{t14}
	Suppose that the solution of equation \eqref{eq2} satisfies 
	$$y(0)=y'(0)=y''(0)=y'''(0)=0.$$
	Then approximation \eqref{3rdOrder} converges to the solution with accuracy $O\llll( h^3 \rrrr)$.
	\end{thm}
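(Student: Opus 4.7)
The plan is to invoke Lemma \ref{ll13} directly and then convert the bound, which depends on the grid index $n$, into a bound that is uniform on the interval $[0,T]$. Since the hypotheses of Theorem \ref{t14} are precisely the hypotheses of Lemma \ref{ll13}, the heavy lifting has already been done; what remains is an index-to-step-size conversion.

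First I would recall the inductive estimate established in Lemma \ref{ll13}, namely
$$|e_n| < \llll(\dddd{10A}{(1-\aaaa)2^{\aaaa}}\rrrr) n^\aaaa h^{3+\aaaa}$$
for every $n$ with $x_n \in [0,T]$. With $h = T/N$ and $x_n = n h$, the admissible indices satisfy $n \leq N = T/h$, so $n^\aaaa \leq (T/h)^\aaaa = T^\aaaa h^{-\aaaa}$. Substituting this into the Lemma \ref{ll13} bound gives
$$|e_n| < \dddd{10A\, T^\aaaa}{(1-\aaaa)2^{\aaaa}}\, h^3,$$
which is the desired $O(h^3)$ estimate, uniformly over $0 \leq n \leq N$.

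I would then point out that the constant in the big-$O$ depends only on $\aaaa$, on $T$, and on $D_{3+\aaaa} = \max_{0\leq x\leq 1}|y^{(3+\aaaa)}(x)|$ (via the definition of $A$ given just before Lemma \ref{ll13}), so the convergence is genuinely of order three as $h\downarrow 0$. There is essentially no further obstacle: the real work, the cancellation between the positive contribution $\sum_{k=1}^{\bar n-1}\gamma_k$ and the lower bound on the tail of the Grünwald weights from Lemma \ref{l12}, already occurred inside the induction of Lemma \ref{ll13}. The only thing one should be careful about is that the hypothesis $y'''(0)=0$ is used both to legitimate the application of Corollary \ref{c5}(ii) in deriving \eqref{3rdOrder} and to make the truncation error $A_n$ satisfy $|A_n|<A h^{3+\aaaa}$, which is what allows the induction in Lemma \ref{ll13} to close.
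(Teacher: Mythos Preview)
Your proposal is correct and follows essentially the same approach as the paper: invoke Lemma \ref{ll13}, use $n\leq N=T/h$ to bound $n^\aaaa h^{3+\aaaa}\leq T^\aaaa h^3$, and read off the constant $\dfrac{10A\,T^\aaaa}{(1-\aaaa)2^{\aaaa}}$. The additional remarks you make about the dependence of the constant and the role of the hypothesis $y'''(0)=0$ are accurate commentary but not part of the paper's (terse) argument.
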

	\begin{proof} The point $x_n=n h$ is in the interval $[0,T]$ when
	$$n\leq N=\dddd{T}{h}$$
because $T=N h$. From Lemma \ref{ll13},
	$$|e_n|<\dddd{10A}{(1-\aaaa) 2^{\aaaa}} n^\aaaa h^{3+\aaaa}\leq \dddd{10A}{(1-\aaaa) 2^{\aaaa}} \dddd{T^\aaaa}{h^\aaaa} h^{3+\aaaa},$$
		$$|e_n|< \llll(\dddd{10A T^\aaaa}{(1-\aaaa) 2^{\aaaa}} \rrrr) h^{3}.$$
	\end{proof}
	\begin{lem} Suppose that equation \eqref{eq2} has a sufficiently smooth solution. 
	$$L_3=\lim_{x\downarrow 0}\llll( \dddd{d^2}{dx^2}f^{(1-\aaaa)}(x)-
	\dfrac{L_1 x^{\aaaa-2}}{\Gamma (\aaaa)}-
	\dfrac{L_2 x^{\aaaa-1}}{\Gamma (\aaaa-1)}\rrrr).$$
\end{lem}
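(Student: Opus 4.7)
The strategy is to repeat the derivation of Lemma 8 but with one extra differentiation of the resulting identity. First I would apply fractional differentiation of order $1-\aaaa$ to equation \eqref{eq2} to obtain
$$y'(x)+y^{(1-\aaaa)}(x)=f^{(1-\aaaa)}(x),$$
exactly as in the previous lemma, and then differentiate twice in $x$ to arrive at
$$y'''(x)+\dddd{d^2}{dx^2}y^{(1-\aaaa)}(x)=\dddd{d^2}{dx^2}f^{(1-\aaaa)}(x).$$
The task then reduces to decomposing $\dddd{d^2}{dx^2}y^{(1-\aaaa)}(x)$ into the two explicit singular terms featuring $L_1$ and $L_2$ plus a remainder that vanishes as $x\downarrow 0$.

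The natural starting point is the identity \eqref{form27} already established in the proof of Lemma 8,
$$y^{(1+(1-\aaaa))}(x)=\dddd{L_1 x^{\aaaa-1}}{\Gamma(\aaaa)}+D_x^{2-\aaaa}y(x).$$
Differentiating once more, the first summand contributes an $x^{\aaaa-2}$ singularity whose coefficient involves $L_1$, with the gamma-function recurrence $\Gamma(\aaaa)=(\aaaa-1)\Gamma(\aaaa-1)$ available to recast the constant into the form appearing in the statement. The term $\dddd{d}{dx}D_x^{2-\aaaa}y(x)$ is then handled by repeating the integration-by-parts step used in Lemma 8, but now starting from $y''$ in place of $y'$: that step peels off a boundary contribution at $\xi=0$ proportional to $L_2=y''(0)$, producing the second singular term of order $x^{\aaaa-1}$, and leaves the smooth residual
$$D_x^{3-\aaaa}y(x)=\dddd{1}{\Gamma(\aaaa)}\int_0^x \dddd{y'''(\xi)}{(x-\xi)^{1-\aaaa}}d\xi.$$
Since $y'''$ is bounded near $0$ by the smoothness hypothesis, the same direct $L^\infty$ estimate used in the proof of Lemma \ref{l6} (compare inequality \eqref{eq_l6}) gives
$$\llll|D_x^{3-\aaaa}y(x)\rrrr|\leq \dddd{\max_{[0,\epsilon]}|y'''(\xi)|}{\Gamma(\aaaa+1)}\,x^{\aaaa}\longrightarrow 0,$$
so substituting the decomposition into the twice-differentiated equation, solving for $y'''(x)$, and passing to the limit $x\downarrow 0$ delivers the claimed expression for $L_3=y'''(0)$.

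The main obstacle is the nested bookkeeping in the two integrations by parts, together with the gamma-function identities needed to collapse the resulting coefficients into the exact form written in the statement. The analytic content is mild—identifying the right antiderivative at each stage and invoking the Lemma \ref{l6}-type bound on the Caputo-integral remainder—but the coefficient tracking is where a careful hand is required.
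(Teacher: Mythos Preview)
Your proposal is correct and follows essentially the same route as the paper: start from the identity \eqref{form27}, differentiate once to split off the $L_1 x^{\aaaa-2}$ singularity, integrate by parts to extract the $L_2 x^{\aaaa-1}$ boundary term together with the remainder $D_x^{3-\aaaa}y(x)$, and then invoke the Lemma~\ref{l6} estimate to kill that remainder as $x\downarrow 0$. The only cosmetic difference is that the paper writes the two differentiations of $f^{(1-\aaaa)}$ in Miller--Ross notation $f^{(2+(1-\aaaa))}$ rather than $\dddd{d^2}{dx^2}f^{(1-\aaaa)}$, and carries out the integration by parts in one step rather than appealing to the gamma recurrence separately; your bookkeeping yields $L_1/\Gamma(\aaaa-1)$ and $L_2/\Gamma(\aaaa)$, in agreement with the paper's proof (the denominators in the displayed statement are swapped, which is a typo there, not in your argument).
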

\begin{proof} By differentiating equation \eqref{eq2} we obtain
$$y''(x)+y^{(1+(1-\aaaa))}(x)=f^{(1+(1-\aaaa))}(x),$$
$$y'''(x)+y^{(2+(1-\aaaa))}(x)=f^{(2+(1-\aaaa))}(x).$$
From \eqref{form27},
$$y^{(2+(1-\aaaa))}(x)=\dfrac{d}{d x}y^{(1+(1-\aaaa))}(x)=
\dfrac{d}{d x}\llll(\dddd{y'(0)x^{\aaaa-1}}{\Gamma(\aaaa)}+\dddd{1}{\Gamma(\aaaa)}  \int_0^x
 \dddd{y''(\xi)}{(x-\xi)^{1-\aaaa}}d\xi\rrrr).$$
Using integration by parts we obtain
$$y^{(2+(1-\aaaa))}(x)=\dddd{y'(0)x^{\aaaa-2}}{\Gamma(\aaaa-1)}+
\dddd{y''(0)x^{\aaaa-1}}{\Gamma(\aaaa)}+\dddd{1}{\Gamma(\aaaa)}  \int_0^x \dddd{y'''(\xi)}{(x-\xi)^{1-\aaaa}}d\xi,$$
$$y^{(2+(1-\aaaa))}(x)=
\dddd{L_2 x^{\aaaa-1}}{\Gamma(\aaaa)}+
\dddd{L_1x^{\aaaa-2}}{\Gamma(\aaaa-1)}+
D_x^{3-\aaaa}y(x).$$
Then
$$y'''(x)=f^{(2+(1-\aaaa))}(x)-\dddd{L_2 x^{\aaaa-1}}{\Gamma(\aaaa)}-
\dddd{L_1x^{\aaaa-2}}{\Gamma(\aaaa-1)}-D_x^{3-\aaaa}y(x).$$
The value of $D_x^{3-\aaaa}y(0)$ is zero, when $y(x)$ is sufficiently differentiable function.
$$y'''(0)=\lim_{x\downarrow 0}\llll( \dddd{d^2}{dx^2}f^{(1-\aaaa)}(x)-\dddd{L_2 x^{\aaaa-1}}{\Gamma(\aaaa)}-
\dddd{L_1x^{\aaaa-2}}{\Gamma(\aaaa-1)}\rrrr).$$
\end{proof}
Let
$$z(x)=y(x)-L_1x-\dddd{L_2}{2} x^2-\dddd{L_3}{6} x^3.$$
\begin{lem} \label{Lem16} The function $z(x)$ satisfies 
$$z(0)=z'(0)=z''(0)=z'''(0)=0$$
 and is a solution of the ordinary fractional differential equation
\begin{align}\label{l16}
z^{(\aaaa)}(x)+z(x)=F(x),
\end{align}
where
$$F(x)=f(x)-L_1 x-\dddd{L_2}{2} x^2-\dddd{L_3}{6} x^3-\dfrac{L_1 x^{1-\aaaa}}{\Gamma (2-\aaaa)}-\dfrac{ L_2 x^{2-\aaaa}}{\Gamma (3-\aaaa)}
-\dfrac{ L_3 x^{3-\aaaa}}{\Gamma (4-\aaaa)}.$$
\end{lem}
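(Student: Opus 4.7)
The plan is to mirror the proof of Lemma~7, extended by one order to accommodate the cubic correction. The statement has two parts: verifying the four initial conditions on $z$, and verifying that $z$ satisfies the stated fractional differential equation. Both parts reduce to direct computation combined with two ingredients: the formula $L_i = y^{(i)}(0)$ for $i=1,2,3$, and the Caputo derivative of the monomials $x, x^2, x^3$ with lower limit at $b=0$.

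First I would handle the initial conditions. Differentiating $z(x) = y(x) - L_1 x - \frac{L_2}{2}x^2 - \frac{L_3}{6}x^3$ up to three times, evaluating at $x=0$, and inserting the definitions $L_1=y'(0)$, $L_2=y''(0)$, $L_3=y'''(0)$ immediately gives $z(0)=z'(0)=z''(0)=z'''(0)=0$. This step is completely analogous to the verification $z(0)=z'(0)=z''(0)=0$ in Lemma~7.

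Next I would compute $z^{(\aaaa)}(x)$. By linearity of the Caputo derivative, I need the derivatives of $x$, $x^2$, $x^3$. Using the Beta-function evaluation of $\frac{1}{\Gamma(1-\aaaa)}\int_0^x (k\xi^{k-1})(x-\xi)^{-\aaaa}\,d\xi$, one obtains the standard identity
$$D_x^\aaaa x^k = \dddd{k!\,x^{k-\aaaa}}{\Gamma(k+1-\aaaa)}\qquad (k=1,2,3).$$
Applying this to each polynomial term with its coefficient yields
$$z^{(\aaaa)}(x)=y^{(\aaaa)}(x)-\dddd{L_1 x^{1-\aaaa}}{\Gamma(2-\aaaa)}-\dddd{L_2 x^{2-\aaaa}}{\Gamma(3-\aaaa)}-\dddd{L_3 x^{3-\aaaa}}{\Gamma(4-\aaaa)},$$
where the factors $k!$ cancel the denominators $1,2,6$ sitting in front of the monomials in $z(x)$.

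Finally I would add $z(x)$ to both sides and substitute $y^{(\aaaa)}(x)+y(x)=f(x)$ from \eqref{eq2}. Collecting terms produces exactly the claimed expression for $F(x)$, completing the proof. There is no real obstacle here: the argument is a direct linearity-plus-substitution computation, entirely parallel to Lemma~7. The only point requiring care is making sure the Caputo monomial formula is applied with the lower limit $b=0$ (as fixed in this section) so that the correction terms $L_i x^{i-\aaaa}/\Gamma(i+1-\aaaa)$ appear with the right constants and cancel cleanly against the polynomial piece of $z$.
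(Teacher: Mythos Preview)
Your proposal is correct and follows exactly the approach the paper uses for the analogous second-order statement (Lemma~9); the paper itself omits an explicit proof of Lemma~16, treating it as the evident one-order extension of that argument. One minor point: the earlier lemma you refer to is Lemma~9 in the paper's numbering, not Lemma~7, but the content of your reference is clear.
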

From Theorem \ref{t14}, we can compute a third order numerical  solution of equation \eqref{l16} with recurrence relations \eqref{3rdOrder}.
\subsection{Numerical examples}
In Lemma 8, we showed that when equation (2) has a sufficiently differentiable solution the function $f(x)$ satisfies the following conditions.
\begin{itemize}
	\item[(C1)] $f(0)=0;$
	\item[(C2)] The following limits exist
	$$\lim_{x\downarrow 0}f^{(1-\aaaa)}(x)=L_1,\quad 
\lim_{x\downarrow 0}\llll( \dddd{d}{dx}f^{(1-\aaaa)}(x)-
	\dfrac{L_1 x^{\aaaa-1}}{\Gamma (\aaaa)}\rrrr)=L_2.$$
\end{itemize}
When  $L_1=L_2=0$ the solution $y(x)$ of  (2) satisfies $y'(0)=y''(0)=0$. The value of $L_3=y'''(0)$ is determined from the following limit.
$$\lim_{x\downarrow 0}\llll( \dddd{d^2}{dx^2}f^{(1-\aaaa)}(x)-
	\dfrac{L_1 x^{\aaaa-2}}{\Gamma (\aaaa)}-
	\dfrac{L_2 x^{\aaaa-1}}{\Gamma (\aaaa-1)}\rrrr)=L_3.$$
	In Example 1 we use the algorithms from Lemma 9 and Lemma \ref{Lem16} to compute second and third order numerical solutions for equation (2) with right-hand side \eqref{fx}. In Example 2 we consider equations \eqref{eeee1} and \eqref{eeee2} for which the conditions (C1) and (C2) are not satisfied.
\subsubsection{Example 1}
Consider the  ordinary fractional differential  equation 
	\begin{equation*}
	\left\{
	\begin{array}{l l}
	y^{(\alpha )}(x)+y(x)=f(x),&  \\
	y(0)=0.&  \\
	\end{array} \label{eq441}
		\right . 
	\end{equation*}
with right-hand side
\begin{align}\label{fx}
f(x)=2x+3x^2+4x^3+6x^{3+\aaaa}+\dddd{2x^{1-\aaaa}}{\Gamma(2-\aaaa)}&+ \dddd{6x^{2-\aaaa}}{\Gamma(3-\aaaa)}+\\
&\dddd{24x^{3-\aaaa}}{\Gamma(4-\aaaa)}+\Gamma(4+\aaaa)x^3.\nonumber
\end{align}
The function $f(x)$ satisfies $f(0)=0$,
\begin{equation*}
\begin{aligned}
f^{(1-\aaaa)}(x)=\dfrac{2x^{\aaaa}}{\Gamma(1+\aaaa)}&+\dddd{6x^{1+\aaaa}}{\Gamma{(2+\aaaa)}}+\dddd{24x^{2+\aaaa}}{\Gamma{(3+\aaaa)}}+\\
&\dddd{\Gamma(4+\aaaa) x^{2+2\aaaa}}{\Gamma(3+2\aaaa)}+2+
6x+12x^2+6(3+\aaaa)x^{2+\aaaa}.
\end{aligned}
\end{equation*}
Then $L_1=f^{(1-\aaaa)}(0)=2$. Now we compute the value of $L_2=y''(0)$.
\begin{equation*}
\begin{aligned}
\dddd{d}{dx}f^{(1-\aaaa)}(x)=\dfrac{2x^{\aaaa-1}}{\Gamma(\aaaa)}+&\dddd{6x^{\aaaa}}{\Gamma{(1+\aaaa)}}+\dddd{24x^{1+\aaaa}}{\Gamma{(2+\aaaa)}}+\dddd{\Gamma(4+\aaaa) x^{1+2\aaaa}}{\Gamma(2+2\aaaa)}\\
&+6+24x+6(2+\aaaa)(3+\aaaa)x^{1+\aaaa}
\end{aligned}
\end{equation*}
$$L_2=\lim_{x\downarrow 0}\llll(\dddd{d}{dx}f^{(1-\aaaa)}(x)-\dddd{2 x^{\aaaa-1}}{\Gamma(\aaaa)} \rrrr)=6.$$
Let
$$F_1(x)=f(x)-L_1 x-\dddd{L_2}{2} x^2-\dfrac{L_1}{\Gamma (2-\aaaa)}x^{1-\aaaa}-\dfrac{ L_2}{\Gamma (3-\aaaa)}x^{2-\aaaa},$$
$$F_1(x)=4x^3+6x^{3+\aaaa}+\dddd{24x^{3-\aaaa}}{\Gamma(4-\aaaa)}+\Gamma(4+\aaaa)x^3.$$
The function $z_1(x)=y(x)-2x-3x^2$ is a solution of
\begin{equation}\label{eq4411}
z_1^{(\aaaa)}(x)+z_1(x)=4x^3+6x^{3+\aaaa}+\dddd{24x^{3-\aaaa}}{\Gamma(4-\aaaa)}+\Gamma(4+\aaaa)x^3.
\end{equation}
The  solution of equation \eqref{eq4411} is $z_1(x)=4x^3+6x^{3+\aaaa}$. From Theorem 7, we can compute second order numerical solutions of equation \eqref{eq4411} with approximations \eqref{Recurrence3} and \eqref{Recurrence4}. Experimental results for aprroximation \eqref{Recurrence4} and $\aaaa=0.25$ are given in Table 3 and Figure 2. 
\begin{equation*}
\begin{aligned}
\dddd{d^2}{d x^2}f^{(1-\aaaa)}(x)=\dddd{2x^{\aaaa-1}}{\Gamma{(\aaaa-2)}}+\dddd{6x^{\aaaa-1}}{\Gamma{(\aaaa)}}+&\dddd{24x^{\aaaa}}{\Gamma{(1+\aaaa)}}+\dddd{\Gamma(4+\aaaa) x^{2\aaaa}}{\Gamma(1+2\aaaa)}+\\
&24+6(\aaaa+1)(\aaaa+2)(\aaaa+3)x^{\aaaa},
\end{aligned}
\end{equation*}
$$L_3=\lim_{x\rightarrow 0}\llll(\dddd{d^2}{d x^2}f^{(1-\aaaa)}(x)-\dddd{2 x^{\aaaa-2}}{\Gamma(\aaaa)} -\dddd{6 x^{\aaaa-1}}{\Gamma(\aaaa-1)} \rrrr)=24.$$
Let
$$F_2(x)=f(x)-L_0-L_1 x-\dddd{L_2}{2} x^2-\dddd{L_3}{6} x^3-\dfrac{L_1 x^{1-\aaaa}}{\Gamma (2-\aaaa)}-\dfrac{ L_2 x^{2-\aaaa}}{\Gamma (3-\aaaa)}
-\dfrac{ L_3 x^{3-\aaaa}}{\Gamma (4-\aaaa)},$$
$$F_2(x)=6x^{3+\aaaa}+\Gamma(4+\aaaa)x^3.$$
The function $z_2(x)=y(x)-1-2x-3x^2-4x^3$ is a solution of the equation
\begin{equation}\label{eq4412}
z_2^{(\aaaa)}(x)+z_2(x)=6x^{3+\aaaa}+\Gamma(4+\aaaa)x^3.
\end{equation}
Equation \eqref{eq4412} has solution $z_2(x)=6x^{3+\aaaa}$. Experimental results for a third order numerical solution of equation \eqref{eq4412} using recurrence relations \eqref{3rdOrder} are given in Table 4 and Figure 3.
\begin{figure}
\centering
\begin{minipage}{.45\textwidth}
  \centering
  \includegraphics[width=.95\linewidth]{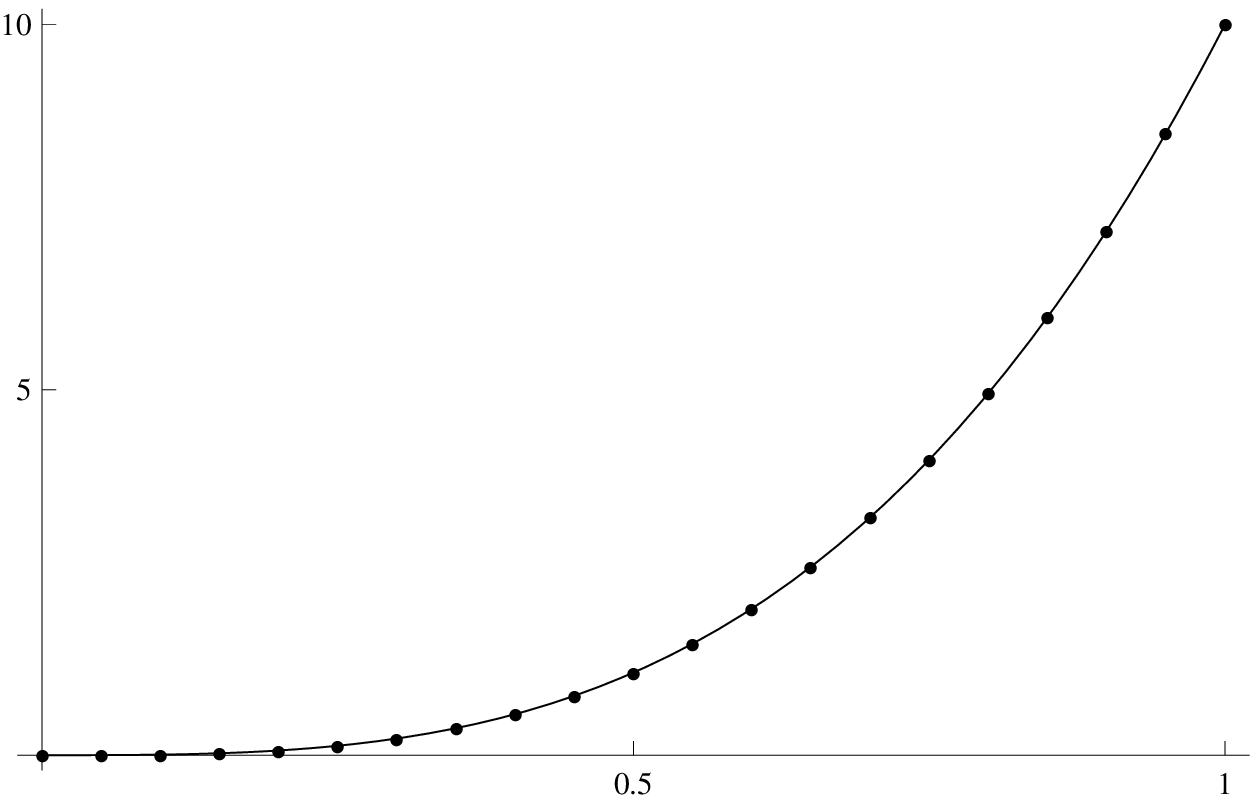}
  \captionof{figure}{Graph of the  solution of equation \eqref{eq4411} and  second order approximation \eqref{Recurrence4} for $h=0.05$ and $\alpha=0.25$.} 
  \label{fig:test1}
\end{minipage}%
\hspace{0.5cm}
\begin{minipage}{.45\textwidth}
  \centering
  \includegraphics[width=.95\linewidth]{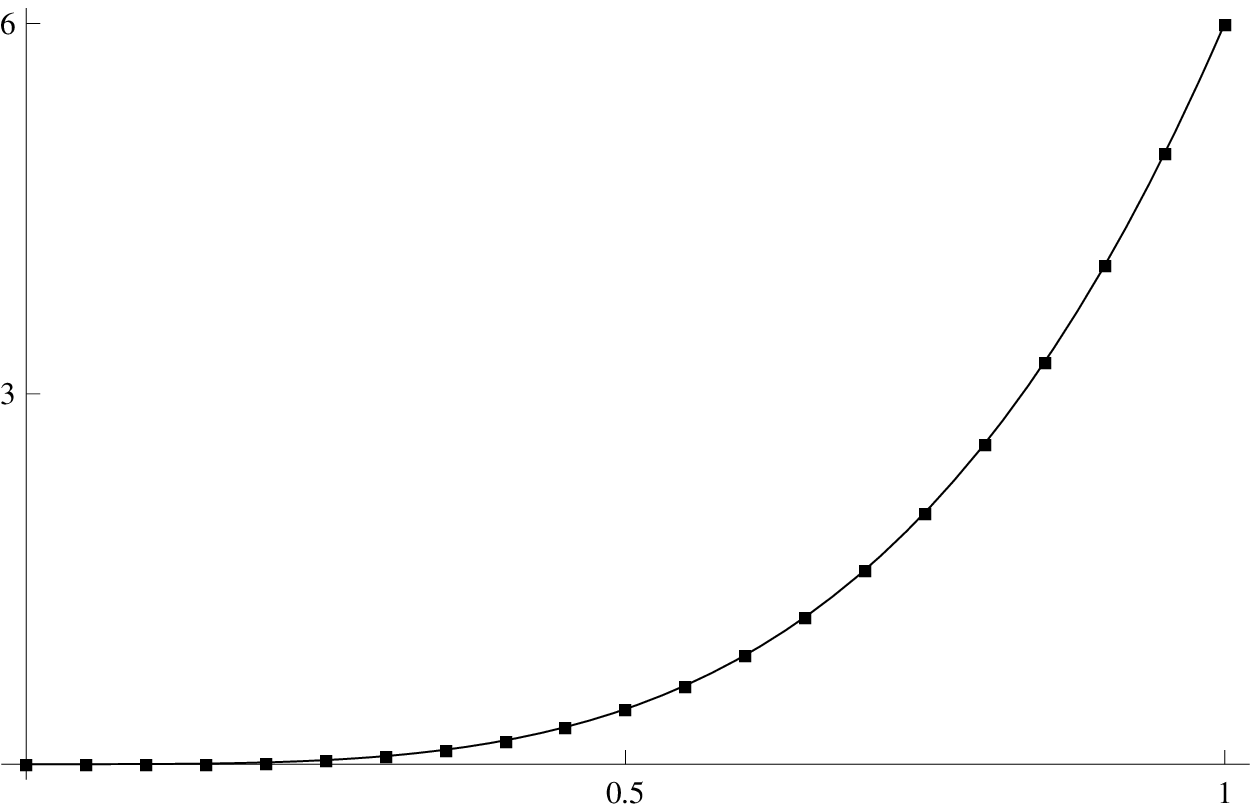}
  \captionof{figure} {Graph of the solution of equation \eqref{eq4412} and  third order approximation \eqref{3rdOrder} for $h=0.05$ and $\alpha=0.75$.}
  \label{fig:test2}
\end{minipage}
\end{figure}
	\begin{table}
	\caption{Maximum error and order of approximation \eqref{Recurrence4} for equation \eqref{eq4411} with  $\alpha=1/4$.}
	\centering
  \begin{tabular}{ l  c c c }
    \hline \hline
    $h$  & $Error$                 & $Ratio$ & $Order$  
		\\ \hline \hline
$0.05$     & $0.00393827$          & $3.93462$  & $1.97622$   \\ 
$0.025$    & $0.00099263$         & $3.96753$  & $1.98824$   \\ 
$0.0125$   & $0.00024916$         & $3.98382$  & $1.99415$   \\ 
$0.00625$  & $0.00006242$        & $3.99192$  & $1.99708$   \\ 
$0.003125$ & $0.00001562$          & $3.99596$  & $1.99854$   \\ \hline
  \end{tabular}
	\end{table}
	\begin{table}
	\caption{Maximum error and order of approximation \eqref{3rdOrder} for equation \eqref{eq4412} with   $\alpha=3/4$.}
	\centering
  \begin{tabular}{ l  c c c }
    \hline \hline
    $h$  & $Error$                 & $Ratio$ & $Order$  
		\\ \hline \hline
$0.05$     & $0.000314897$                 & $7.52526$  & $2.91174$   \\ 
$0.025$    & $0.000040446$                & $7.78566$  & $2.96082$   \\ 
$0.0125$   & $5.121\times 10^{-6}$       & $7.89840 $  & $2.98156$   \\ 
$0.00625$  & $6.441\times 10^{-7}$        & $7.95061$  & $2.99107$   \\ 
$0.003125$ & $8.075\times 10^{-8}$       & $7.97568$  & $2.99561$   \\ \hline
  \end{tabular}
	\end{table}
	\subsubsection{Example 2}
	We compute numerical solutions for two ordinary fractional differential equations  for which the conditions (C1) and (C2) for differentiable solution are not satisfied.
	\begin{equation}\label{eeee1}
	y^{(0.25)}(x)+y(x)=x^{0.25}+\Gamma(1.25).
	\end{equation}
	The solution of equation \eqref{eeee1} is $y(x)=x^{0.25}$. The solution is not a continuously differentiable function, because condition (C1) is not satisfied
	$$f(0)=\Gamma(1.25)\neq 0.$$
	We compute a numerical solution of equation \eqref{eeee1} with recurrence relations \eqref{Recurrence3}. When $h=0.025$ the error is $0.0167995$. The approximation  converges to the solution with a  very slow rate and the approximation order is smaller than one. Experimental results are given in Table 5.
		\begin{table}
	\caption{Maximum error and order of approximation  \eqref{Recurrence3} for  equation \eqref{eeee1}.}
	\centering
  \begin{tabular}{ l  c c c }
    \hline \hline
    $h$  & $Error$          & $Ratio$    & $\log_2 (Ratio)$  
		\\ \hline \hline 
$0.0125$     & $0.0162401$    & $1.03445$  & $0.048863$   \\ 
$0.00625$    & $0.0152728$    & $1.06333$  & $0.088589$   \\ 
$0.003125$   & $0.0140672$    & $1.08571$  & $0.118636$   \\ 
$0.0015625$  & $0.0127479$    & $1.10349$  & $0.142075$   \\ 
$0.00078125$ & $0.0114037$    & $1.11787$  & $0.160754$   \\ 
\hline
\end{tabular}
	\end{table}
	
		The following equation has solution $y(x)=x^{1.25}$.
	\begin{equation}\label{eeee2}
	y^{(0.25)}(x)+y(x)=\Gamma(1.25)x+x^{1.25}.
	\end{equation}
	The solution is equation \eqref{eeee2} has better differentiability properties than the solution of equation \eqref{eeee1}. We can expect that approximation  \eqref{Recurrence3} has higher accuracy for equation \eqref{eeee2}.
	$$f(x)=\Gamma(1.25)x+x^{1.25}, \qquad (f(0)=0).$$
	$$f^{(0.75)}(x)=x^{0.25}+\dddd{\Gamma(2.25)}{\Gamma(1.5)}x^{0.5},
	\quad L_1=\lim_{x\downarrow 0} f^{(0.75)}(x)=0.
	$$
		$$\dddd{d}{dx}f^{(0.75)}(x)=\dddd{0.25}{x^{0.5}}+\dddd{\Gamma(2.25)}{\Gamma(0.5)x^{0.5}},
	\quad L_2=\lim_{x\downarrow 0} \dddd{d}{dx}f^{(0.75)}(x)=\infty.$$
	The solution of equation \eqref{eeee2} doesn't have a continuous second derivative on the interval $[0,1]$, because condition (C2) is not satisfied. 
	When $h=0.025$ the error is $0.000148$. The accuracy of approximation  \eqref{Recurrence3} for equation \eqref{eeee2} is around $O\llll(h^{1.31}\rrrr)$, when $h>0.0008$ (Table 6).
		\begin{table}
	\caption{Maximum error and order of approximation  \eqref{Recurrence3} for  equation \eqref{eeee1}.}
	\centering
  \begin{tabular}{ l  c  c c }
    \hline \hline
    $h$  & $Error$          & $Ratio$    & $Order$  
		\\ \hline \hline 
$0.0125$     & $0.0000589415$             & $2.50814$  & $1.32662$   \\ 
$0.00625$    & $0.0000235523$             & $2.50259$  & $1.32342$   \\ 
$0.003125$   & $9.4369\times 10^{-6}$     & $2.49576$  & $1.31948$   \\ 
$0.0015625$  & $3.7930\times 10^{-6}$    & $2.48797$  & $1.31497$   \\ 
$0.00078125$ & $1.5297\times 10^{-6}$    & $2.47954$  & $1.31007$   \\ 
\hline
  \end{tabular}
	\end{table}
\section{Second-order implicit difference approximations for the time fractional sub-diffusion equation}
In the present section we determine implicit difference approximations for the fractional sub-diffusion equation. We show that when the solution of the sub-diffusion equation is a sufficiently differentiable function the difference approximations have second order accuracy $O\llll(\tau^2+h^2\rrrr)$.
 The analytic solution  of the fractional sub-diffusion equation can be determined using Laplace-Fourier transform \cite{Podlubny1999} or separation of variables for special cases of the boundary conditions and the function $G(x,t)$.
The fractional diffusion equation 
	\begin{equation}
	\left\{
	\begin{array}{l l}
	\dfrac{\partial^\alpha u(x,t)}{\partial t^\alpha}=\dfrac{\partial^2 u(x,t)}{\partial x^2},& \\
	u(0,t)=u(1,t)=0, u(x,0)=g(x).&  \\
	\end{array} \label{FDE2}
		\right . 
	\end{equation}
 has an analytical solution \cite{ HejaziMoroneyLiu2014}.
\begin{align}\label{exact1}
u(x,t)=2\sum_{n=1}^\infty c_n E_\aaaa (-n^2 \pi^2 t^\aaaa) \sin (n\pi x)
\end{align}
on the domain $\{0\leq x\leq 1,t\geq 0\} $,where
$$c_n=\int_0^1 g(\xi) \sin (n\pi \xi)d\xi$$
and  $E_\aaaa$ is the one-parameter Mittag-Leffler function.
Each term 
$$E_\aaaa (-n^2 \pi^2 t^\aaaa) \sin (n\pi x)$$
 is a solution of \eqref{fdeqn} and its coefficient $c_n$ is the coefficient of the Fourier sine series of the function $g(x)$.
The graph of the analytical  solution of \eqref{FDE2} when $g(x)=x^2(x-1)$ is given in Figure 4. 
\begin{figure}[ht]
  \centering
  \caption{Analytical solution of  \eqref{FDE2} for $\aaaa=1/2,\; g(x)=x^2(x-1)$ and $0\leq t\leq 0.05$.} 
  \includegraphics[width=0.6\textwidth]{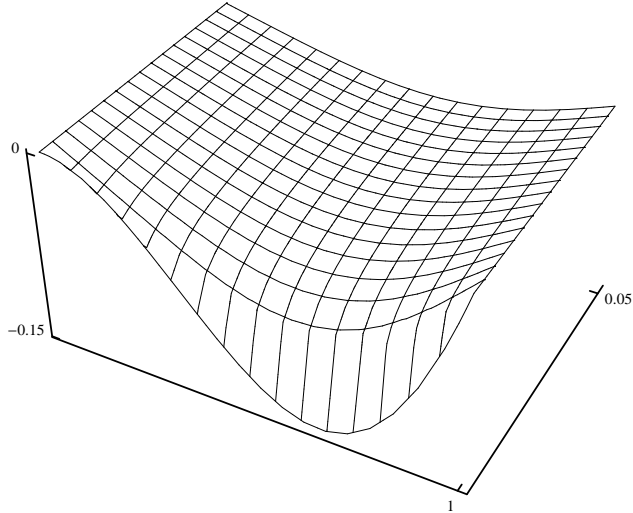}
\end{figure}
\subsection{Second order implicit difference approximation}
The fractional sub-diffusion equation is an important equation in fractional calculus. The implicit difference approximation which uses approximation \eqref{CaputoApproximation} for the fractional derivative and central difference approximation for the second derivative with respect to $x$, has accuracy $O\llll( \tau^{2-\aaaa}+h^2\rrrr)$\cite{GaoSunZhang2012}.
Finite difference approximations are convenient way to approximate the solution of partial fractional differential equations. They combine  simple description with stability and high accuracy. Even when the exact solution of the time-fractional diffusion equation is available, the finite difference approximations may have higher accuracy than approximations using the exact solution. The approximation error of a numerical solution of \eqref{FDE2} computed by truncating \eqref{exact1} includes errors from the truncation of the Fourier series at the endpoints and the approximations of the coefficients $c_n$ and the Mittag-Leffler functions. In this section we determine second order difference approximations for equation (1) on the domain
	$$D=[0,1]\times [0,T].$$
 We can assume that equation  \eqref{fdeqn} has homogeneous initial and  boundary  conditions
	\begin{equation}\label{HomogFDE}
	\left\{
	\begin{array}{l l}
\dfrac{\partial^\alpha u(x,t)}{\partial t^\alpha}=\dfrac{\partial^2 u(x,t)}{\partial x^2}+G(x,t),& \\
	u(0,t)=u(1,t)=0, u(x,0)=0.&  \\
	\end{array} 
		\right . 
	\end{equation}
If equation \eqref{fdeqn} is given with non-homogeneous  initial or boundary conditions the substitution 
$$\overline{u}(x,t)= u(x,t)-u(x,0)-(1-x)(u(0,t)-u(0,0))-x (u(1,t)- u(1,0))$$
converts the equation to equation which has the same form and homogenous initial and boundary conditions. In Lemma \ref{l6}, we showed that when the solution $u(x,t)$ is sufficiently differentiable function with respect to the time variable $t$, the fractional derivative $\dfrac{\partial^\alpha u(x,0)}{\partial t^\alpha}$ is zero when $t=0$.
When the solution $u(x,t)$ has continuous second derivative $u_{xx}(x,t)$  with respect to $t$, the function $G(x,t)$ satisfies the  condition $G(x,0)=0$.
$$G(x,0)=\llll. \dfrac{\partial^\alpha u(x,t)}{\partial t^\alpha}\rrrr|_{t=0}-\llll. \dfrac{\partial^2 u(x,t)}{\partial x^2}\rrrr|_{t=0}=0.$$
This compatibility condition corresponds to the condition $f(0)=0$ for differentiable solution of ordinary fractional differential equation (2), with initial condition $y(0)=0$.
In order to construct  second order difference approximations for equation \eqref{FDE2} using approximations \eqref{SecondOrder} and \eqref{SecondOrder2} for the Caputo derivative, the first step is to ensure that the first and second derivatives of the solution $u_t(x,t)$ and $u_{tt}(x,t)$ with respect to the time variable $t$  are equal to zero when $t=0$. Let
$$ L_1(x)=\llll. \dfrac{\partial u(x,t)}{\partial t}\rrrr|_{t=0},\quad 
L_2(x)=\llll. \frac{\partial^2 u(x,0)}{\partial t^2}\rrrr|_{t=0},\quad
L(x)=\llll. \frac{\partial^3 u(x,0)}{\partial t\partial x^2}\rrrr|_{t=0}.
$$
By applying time fractional derivative of order $1-\aaaa$ to equation (1) we obtain
\begin{align}\label{s41}
\dfrac{\partial u(x,t)}{\partial t}=\dfrac{\partial^{3-\aaaa} u(x,t)}{\partial t^{1-\aaaa} \partial x^2}+\dddd{\partial^{1-\aaaa}G(x,t)}{t^{1-\aaaa}}.
\end{align}
When $u_{xx}(x,t)$ is bounded in $D$, we have that $\frac{\partial^{3-\aaaa} u(x,t)}{\partial t^{1-\aaaa} \partial x^2}=0$. Then
\begin{align}\label{LL1}
L_1(x)=\llll. \dddd{\partial^{1-\aaaa}G(x,t)}{t^{1-\aaaa}}\rrrr|_{t=0}.
\end{align}
 By differentiating \eqref{s41} with respect to $t$ we obtain
 \begin{align}\label{e49}
\dfrac{\partial^2 u(x,t)}{\partial t^2}=
\dfrac{\partial}{\partial t} \dfrac{\partial^{3-\aaaa} u(x,t)}{\partial t^{1-\aaaa} \partial x^2}+\dfrac{\partial}{\partial t}\dddd{\partial^{1-\aaaa}G(x,t)}{t^{1-\aaaa}}.
\end{align} 
The Caputo derivative of order $1-\aaaa$ of the function $u_{xx}(x,t)$ with respect to $t$ is defined as
$$\dfrac{\partial^{3-\aaaa} u(x,t)}{\partial t^{1-\aaaa} \partial x^2}=
\dddd{1}{\Gamma(\aaaa)}\int_{0}^{t} \dfrac{\partial^{3} u(x,\xi)}{\partial t \partial x^2} (t-\xi)^{\aaaa-1}d\xi.$$
After integration by parts and differentiation with respect to $t$ we obtain
$$\dfrac{\partial}{\partial t} \dfrac{\partial^{3-\aaaa} u(x,t)}{\partial t^{1-\aaaa} \partial x^2}=\dddd{t^{\aaaa-1}}{\Gamma(\aaaa)} \llll.\dddd{\partial^3 u(x,t)}{\partial t\partial x^2}\rrrr|_{t=0}+\dddd{1}{\Gamma(\aaaa)}
\int_0^t \dddd{\partial^4 u(x,\xi)}{\partial t^2 \partial x^2}(t-\xi)^{\aaaa-1}d\xi.$$
From the definition of Caputo derivative of order $2-\aaaa$ with respect to $t$
$$\dddd{1}{\Gamma(\aaaa)}
\int_0^t \dddd{\partial^4 u(x,\xi)}{\partial t^2 \partial x^2}(t-\xi)^{\aaaa-1}d\xi=D_t^{2-\aaaa} \dddd{\partial^2 u(x,t)}{ \partial x^2}.$$
So,
\begin{align}\label{e48}
\dfrac{\partial}{\partial t} \dfrac{\partial^{3-\aaaa} u(x,t)}{\partial t^{1-\aaaa} \partial x^2}=\dddd{t^{\aaaa-1}}{\Gamma(\aaaa)} L(x)+D_t^{2-\aaaa} \dddd{\partial^2 u(x,t)}{ \partial x^2}.
\end{align}
We have that
$$\dddd{\partial^3 u(x,t)}{\partial t\partial x^2}=
\dddd{\partial }{\partial t}\llll( \dddd{\pppp^\aaaa u(x,t)}{\pppp t^\aaaa}-G(x,t) \rrrr)=
\dddd{\partial }{\partial t} \dddd{\pppp^\aaaa u(x,t)}{\pppp t^\aaaa}-\dddd{\partial }{\partial t}G(x,t).$$
By integrating by parts and differentiating with respect to $t$
$$\dddd{\partial }{\partial t} \dddd{\pppp^\aaaa u(x,t)}{\pppp t^\aaaa}=\dddd{\partial u(x,0)}{\partial t} \dddd{t^{-\aaaa}}{\Gamma(1-\aaaa)}+
\dddd{1}{\Gamma(1-\aaaa)} \int_0^t \dddd{\partial^2 u(x,\xi)}{\partial t^2}(t-\xi)^{\aaaa}d\xi.$$
Then
$$\dddd{\partial^3 u(x,t)}{\partial t\partial x^2}=
\dddd{\partial u(x,0)}{\partial t} \dddd{t^{-\aaaa}}{\Gamma(1-\aaaa)}+
\dddd{1}{\Gamma(1-\aaaa)} \int_0^t \dddd{\partial^2 u(x,\xi)}{\partial t^2}
(t-\xi)^{\aaaa}d\xi-\dddd{\partial }{\partial t}G(x,t), $$
$$\dddd{\partial^3 u(x,t)}{\partial t\partial x^2}=
 \dddd{L_1(x)}{\Gamma(1-\aaaa)t^\aaaa}+
D_t^{1+\aaaa}  u(x,t)-\dddd{\partial }{\partial t}G(x,t). $$
The function $L(x)$ is computed as
\begin{align}\label{LL}
L(x)=\llll. \dddd{\partial^3 u(x,t)}{\partial t\partial x^2}\rrrr|_{t=0}=
\lim_{t\downarrow 0} \llll( \dddd{L_1 (x)}{\Gamma(1-\aaaa)t^\aaaa}-\dddd{\partial }{\partial t}G(x,t) \rrrr)
\end{align}
because the Caputo derivative $D_t^{1+\aaaa}  u(x,0)$ is zero when $u_{tt}(x,t)$ is bounded. The function $L_2(x)$ is computed from \eqref{e49} and \eqref{e48}
 $$\dfrac{\partial^2 u(x,t)}{\partial t^2}=
\dddd{t^{\aaaa-1}}{\Gamma(\aaaa)} L(x)+D_t^{2-\aaaa} \dddd{\partial^2 u(x,t)}{ \partial x^2}+\dfrac{\partial}{\partial t}\dddd{\partial^{1-\aaaa}G(x,t)}{t^{1-\aaaa}},$$ 
 \begin{align}\label{LL2}
L_2(x)=\dfrac{\partial^2 u(x,0)}{\partial t^2}=\lim_{t\downarrow 0}\llll(
\dddd{L(x)}{\Gamma(\aaaa)t^{1-\aaaa}}+\dfrac{\partial}{\partial t}\dddd{\partial^{1-\aaaa}G(x,t)}{t^{1-\aaaa}}\rrrr).
\end{align}
Let
$$v(x,t)=u(x,t)-L_1(x)t-\dfrac{L_2(x)}{2}t^2.$$
The partial derivatives $v_t(x,t)$ and $v_{tt}(x,t)$ of the function $v(x,t)$ are equal to zero when $t=0$.
$$\dddd{\partial^\alpha v(x,t)}{\partial t^\alpha}=
\dddd{\partial^\alpha u(x,t)}{\partial t^\alpha}-\dddd{L_1(x)t^{1-\aaaa}}{\Gamma(2-\aaaa)}-\dddd{L_2(x)t^{2-\aaaa}}{\Gamma(3-\aaaa)},$$
$$\dddd{\partial^2 v(x,t)}{\partial x^2}=
\dddd{\partial^2 u(x,t)}{\partial x^2}- L''_1(x)t-\dfrac{L''_2(x)}{2}t^2.$$
The function $v(x,t)$ is solution of the fractional sub-diffusion equation
	\begin{equation}\label{VHomogFDE}
	\left\{
	\begin{array}{l l}
\dfrac{\partial^\alpha v(x,t)}{\partial t^\alpha}=\dfrac{\partial^2 v(x,t)}{\partial x^2}+H(x,t),& \\
	v(0,t)=v(1,t)=0, v(x,0)=0,&  \\
	\end{array} 
		\right . 
	\end{equation}
where
\begin{align}\label{H}
H(x,t)=G(x,t)-\dddd{L_1(x)t^{1-\aaaa}}{\Gamma(2-\aaaa)}-\dddd{L_2(x)t^{2-\aaaa}}{\Gamma(3-\aaaa)}+L''_1(x)t+\dfrac{L''_2(x)}{2}t^2.
\end{align}
In section 4 we used approximation \eqref{SecondOrder2} to determine second order numerical solutions \eqref{Recurrence3} and \eqref{Recurrence4} for equation \eqref{eq2}. Now we use  \eqref{SecondOrder2} and a central difference approximation for $u_{xx}(x,t)$ to construct implicit difference approximations \eqref{FiniteDifferenceScheme} and \eqref{FiniteDifferenceScheme2} for equation \eqref{VHomogFDE}. In Theorem \ref{t29} we show that the difference approximations are unconditionally stable and have second order accuracy $O\llll(\tttt^2+h^2\rrrr)$.
Let $h=1/N$ and $\tttt=T/M$ where $M$ and $N$ are   positive  integers, and
$$x_n=nh,\quad t_m=m\tau,\quad v_n^m=v(x_n,t_m),\quad H_n^m=H(x_n,t_m).$$
From approximation \eqref{SecondOrder2} and equation \eqref{VHomogFDE}
\begin{align*}
\Delta_h^\aaaa v(x_n, t_m )&=\llll( \dddd{\aaaa}{2}\rrrr)
\dddd{\partial^\aaaa v(x_n, t_{m-1} )}{\partial t^\aaaa}+
\llll( 1-\dddd{\aaaa}{2}\rrrr)
\dddd{\partial^\aaaa v(x_n, t_{m} )}{\partial t^\aaaa}+O\left(\tau^2\right)\\
&=\llll( \dddd{\aaaa}{2}\rrrr)
\dddd{\partial^2 v(x_n, t_{m-1} )}{\partial x^2}+
\llll( 1-\dddd{\aaaa}{2}\rrrr)
\dddd{\partial^2 v(x_n, t_{m} )}{\partial x^2}\\
&+\llll( \dddd{\aaaa}{2}\rrrr)H_n^{m-1}+
\llll( 1-\dddd{\aaaa}{2}\rrrr)H_n^m+O\left(\tau^2\right).
\end{align*}
By approximating the second derivatives $u_{xx}(x_n,t_{m-1})$ and $u_{xx}(x_n,t_{m})$ with central difference formulas we obtain
\begin{align*}
 \dfrac{1}{\tau^\alpha} \sum_{k=0}^m & w_k^{(\alpha)} v_n^{m-k}+O\left(\tttt^2+h^2\right) =\left(1-\dfrac{\alpha}{2}\right)
\dfrac{v_{n-1}^{m}-2v_n^{m}+v_{n+1}^{m}}{h^2}\\
&+\left(\dfrac{\alpha}{2}\right) \dfrac{v_{n-1}^{m-1}-2v_n^{m-1}+v_{n+1}^{m-1}}{h^2}+\llll( \dddd{\aaaa}{2}\rrrr)H_n^{m-1}+
\llll( 1-\dddd{\aaaa}{2}\rrrr)H_n^m.
\end{align*}
Let $\eta=\frac{\tau^\alpha }{h^2}$. The solution of equation \eqref{VHomogFDE} satisfies
\begin{align}
& v_n^{m}-\llll(1-\frac{\alpha}{2} \rrrr) \eta  \left(v_{n-1}^{m}-2v_n^{m}+v_{n+1}^{m}\right)+\tau^\alpha O\left(\tau^2+h^2\right) =-  \sum_{k=2}^m w_k^{(\alpha)} v_n^{m-k} \nonumber\\
&+\alpha v_n^{m-1}+\frac{\alpha \eta}{2}  \left(v_{n-1}^{m-1}-2v_n^{m-1}+v_{n+1}^{m-1}\right)+\tau^\alpha \llll( \llll( \dddd{\aaaa}{2}\rrrr)H_n^{m-1}+
\llll( 1-\dddd{\aaaa}{2}\rrrr)H_n^m\rrrr).\label{eqnn44}
\end{align}
Let $x=(x_n)$ be an $N-1$ dimensional vector. The maximum (infinity) norm  of the vector $x$ is 
$$\llll\| x\rrrr\|=\max_{1\leq n\leq N-1} |x_n|.$$
Define the vectors $V_m,H_m$ and $K_m$ as
\begin{itemize}
	\item $V_m=\llll( v_n^m\rrrr)_{n=1}^{N-1}$ - a vector of  values of the exact solution at time $t=m\tau$;
	\item $H_m=\llll(  \llll( \dddd{\aaaa}{2}\rrrr)H_n^{m-1}+
\llll( 1-\dddd{\aaaa}{2}\rrrr)H_n^m\rrrr)_{n=1}^{N-1};$
	\item $K_m=\llll( k_n^m\rrrr)_{n=1}^{N-1}$ - a vector of the truncation errors at $t=m\tau$. In \eqref{eqnn44} we showed that $\llll\| K^m\rrrr\|\in  O\left(\tau^\alpha\llll(\tau^2+h^2\right)\rrrr)$. The elements of the vector $K_m$ satisfy
	$$\llll|k^m_n\rrrr|<K \tau^\alpha\llll(\tau^2+h^2\right),$$
\end{itemize}
	where $K>1$ is a positive constant (The conditions $K>1$  and  \eqref{CR} guarantee that $C_R>1$).
	
Let $g_n^{(\alpha)}=-w_n^{(\alpha)}$ and  $A=A_{N-1}$ be a tridiagonal square matrix with entries $2$ on the main diagonal and $-1$ on the first diagonals below and above the main diagonal.
$$A_5 =
 \begin{pmatrix}
  2       &  -1       & 0        & 0          & 0       \\
 -1       &  2        & -1       & 0          & 0       \\
  0       & -1        &  2       & -1         & 0       \\
  0       & 0         & -1       &  2         & -1 \\
	0       & 0         & 0        & -1         &  2   
 \end{pmatrix}
,\qquad
X_5 =
 \begin{pmatrix}
b      & a      &  0       & 0          & 0       \\
c      & b      &  a       & 0          & 0       \\
0      & c      &  b       & a          & 0       \\
0      & 0      &  c       &  b         & a \\
0      & 0      &  0       & c          & b   
 \end{pmatrix}.
$$
The numbers $g_n^{(\aaaa)}$ are positive for $n\geq1$ and 
$\sum_{n=1}^\infty g_n^{(\aaaa)}=1$.
The eigenvalues of the matrix $A$ are determined from the following more general result for eigenvalues of a  tridiagonal matrix \cite{DingLi2013}.
\begin{lem}  The eigenvalues of the tridiagonal matrix $X=X_{N-1}$ with entries $b$ on the main diagonal and $a$ and $c$ on the first diagonals above and below the main diagonal
are given by
$$\lambda_k=b+2a\sqrt{\dddd{c}{a}}\cos\llll(\dddd{k \pi }{N} \rrrr),\quad (k=1,2,\cdots,N-1).$$
\end{lem}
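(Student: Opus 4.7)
The plan is to reduce the eigenvalue problem $Xv=\lambda v$ to a constant-coefficient linear recurrence for the components of $v$ and then to satisfy the two Dirichlet-type boundary conditions that the tridiagonal form imposes. Writing $v=(v_1,\dots,v_{N-1})^{\mathrm T}$ and adjoining the ghost values $v_0=v_N=0$, the eigenequation is exactly
\begin{equation*}
a v_{n+1}+(b-\lambda)v_n+c v_{n-1}=0,\qquad 1\le n\le N-1,\quad v_0=v_N=0.
\end{equation*}
This is a second-order linear recurrence with boundary conditions at $n=0$ and $n=N$, i.e.\ a discrete Sturm--Liouville problem.

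My first step would be to guess the eigenvectors by mimicking the continuous sine eigenfunctions on $[0,N]$, with a scaling factor to absorb the asymmetry between $a$ and $c$. Specifically, set $\rho=\sqrt{c/a}$ and try $v_n=\rho^n\sin(n\theta)$, where $\theta$ is to be determined. Plugging into the recurrence, dividing by $\rho^{n-1}$, and using $a\rho^2=c$ collapses the three terms into
\begin{equation*}
c\bigl[\sin((n+1)\theta)+\sin((n-1)\theta)\bigr]+(b-\lambda)\rho\sin(n\theta)=0,
\end{equation*}
and the product-to-sum identity $\sin((n+1)\theta)+\sin((n-1)\theta)=2\sin(n\theta)\cos\theta$ reduces this to the $n$-independent relation $2c\cos\theta+(b-\lambda)\rho=0$, i.e.
\begin{equation*}
\lambda=b+\frac{2c\cos\theta}{\rho}=b+2a\sqrt{\tfrac{c}{a}}\cos\theta,
\end{equation*}
which is precisely the claimed formula once $\theta$ is identified. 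The boundary condition $v_0=0$ is automatic, and $v_N=\rho^N\sin(N\theta)=0$ forces $N\theta\in\pi\mathbb{Z}$, so $\theta=\theta_k:=k\pi/N$.

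Finally I would check that $k=1,2,\dots,N-1$ gives exactly $N-1$ genuinely distinct eigenvalues and eigenvectors (the values $k=0$ and $k=N$ produce the trivial vector, and $\cos(k\pi/N)$ is strictly decreasing on this range, so the $\lambda_k$ are mutually distinct). Since $X$ has order $N-1$ and we have produced $N-1$ distinct eigenvalues with corresponding nonzero eigenvectors $v^{(k)}_n=\rho^n\sin(nk\pi/N)$, these exhaust the spectrum. The only subtle point, and in my view the main potential obstacle, is justifying the square-root manipulation when $c/a$ is negative (so that $\rho$ becomes imaginary): one either restricts to the sign regime $ac>0$ that is relevant for the matrices arising in this paper, or reinterprets the sine as $\sinh$ via Euler's formula so that the algebra goes through formally and the final expression $b+2\sqrt{ac}\cos(k\pi/N)$ still makes sense with the appropriate branch. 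Under the standing assumption that $ac\ge 0$ (which holds for the matrix $A$ constructed above, where $a=c=-1$), this issue does not arise and the proof is complete.
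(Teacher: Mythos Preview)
Your argument is correct and is the standard derivation: reduce $Xv=\lambda v$ to a three-term recurrence with Dirichlet endpoints, use the ansatz $v_n=\rho^n\sin(n\theta)$ with $\rho=\sqrt{c/a}$ to symmetrize, and read off $\theta_k=k\pi/N$ from the boundary condition. The counting argument at the end is also fine.

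The only comparison to make is that the paper does not prove this lemma at all: it is stated as a known result with a reference to Ding and Li \cite{DingLi2013}, and is then immediately specialized in Corollary \ref{c18} to the matrix $A$ (with $a=c=-1$, $b=2$). So you have supplied more than the paper does. Your caveat about the sign of $ac$ is well placed; since the only application in the paper is to $A$, where $ac=1>0$, the issue never arises and no branch discussion is needed.
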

\begin{cor}\label{c18}
The matrix $A$ has eigenvalues
$$\lambda_k=4 \sin^2 \llll( \dddd{k \pi}{N}\rrrr),\quad (k=1,\cdots,N-1).$$
\end{cor}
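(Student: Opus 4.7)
The plan is to derive Corollary 18 as a direct specialization of Lemma 17. The matrix $A$ is exactly $X_{N-1}$ in the notation of Lemma 17, with the parameters $b = 2$, $a = -1$, and $c = -1$. Substituting these values into the formula $\lambda_k = b + 2a\sqrt{c/a}\cos(k\pi/N)$ gives, after noting $\sqrt{c/a} = \sqrt{(-1)/(-1)} = 1$, the expression
\[
\lambda_k = 2 - 2\cos\!\left(\frac{k\pi}{N}\right), \qquad k = 1, 2, \ldots, N-1.
\]

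The second step is purely trigonometric: apply the half-angle identity $1 - \cos\theta = 2\sin^2(\theta/2)$ with $\theta = k\pi/N$ to rewrite $2 - 2\cos(k\pi/N) = 4\sin^2(k\pi/(2N))$, which yields the stated closed form (up to the obvious normalization of the argument). This gives the eigenvalues in the sine-squared form used in the subsequent stability analysis.

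Since Lemma 17 is cited and assumed, there is essentially no obstacle here; the only issue is bookkeeping of signs under the square root (ensuring that $c/a > 0$ so that the formula is real-valued, which is immediate) and correctly identifying the entries of $A$ with the parameters $(a,b,c)$ of the lemma. I would present the proof as a two-line deduction: substitute into Lemma 17, then invoke the half-angle identity.
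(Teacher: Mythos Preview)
Your approach is exactly the paper's: substitute $a=c=-1$, $b=2$ into Lemma~17 to obtain $\lambda_k = 2 - 2\cos(k\pi/N)$, then apply the half-angle identity. You are also right that the identity gives $4\sin^2\!\bigl(k\pi/(2N)\bigr)$ rather than $4\sin^2\!\bigl(k\pi/N\bigr)$; the paper's stated form carries a typo in the argument of the sine (which is harmless for the subsequent analysis, since all that is used later is $\lambda_k>0$).
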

\begin{proof} From Lemma 5 with $a=c=-1$ and $b=2$ we obtain
$$\lambda_k=2-2\cos \llll( \dddd{k \pi}{N}\rrrr)=4 \sin^2 \llll( \dddd{k \pi}{N}\rrrr).$$
\end{proof}
Equation \eqref{eqnn44} is written in a matrix form as
$$\llll(I+\llll(1-\frac{\alpha}{2} \rrrr)\eta A\rrrr) V_m=\llll(\aaaa I-\frac{\alpha \eta}{2}  A\rrrr) V_{m-1}+\sum_{k=2}^{m-1} g_k^{(\alpha)} V_{m-k}+\tau^\alpha H_m+K_m.$$
Let $P$ and $Q$ be the following matrices.
$$P=I+\llll(1-\frac{\alpha}{2} \rrrr)\eta A, \quad Q=I-\dfrac{\eta}{2} A.$$
Then
$$P V_m=\aaaa Q V_{m-1}+\sum_{k=2}^{m-1} g_k^{(\alpha)} V_{m-k}+
\tau^\alpha H_m+K_m.$$
We compute an approximation $\wwww{V}_m$ to the exact solution 
$V_m$ of equation \eqref{VHomogFDE} at time $t_m=m\tttt$ on the grid
$$\{(x_n,t_m)|1\leq n\leq N,1\leq m\leq M\}$$
  with $\wwww{V}_0=0$ and the linear systems
\begin{equation}\label{FiniteDifferenceScheme}
P \wwww{V}_m=\aaaa Q \wwww{V}_{m-1}+\sum_{k=2}^{m-1} g_k^{(\alpha)} \wwww{V}_{m-k}+\tau^\alpha H_m.
\end{equation}
The values of the elements of the vector $H_m$ satisfy 
$$\llll( \dddd{\aaaa}{2} \rrrr)H_n^{m-1}+
\llll( 1-\dddd{\aaaa}{2}\rrrr)H_n^m=H_n^{m-\aaaa/2}+O\llll(\tau^2\rrrr).$$
Another  difference  approximation for equation \eqref{VHomogFDE} is computed recursively with  the linear systems
\begin{equation}\label{FiniteDifferenceScheme2}
P \wwww{V}_m=\aaaa Q \wwww{V}_{m-1}+\sum_{k=2}^{m-1} g_k^{(\alpha)} \wwww{V}_{m-k}+\tau^\alpha \oooo{H}_m,
\end{equation}
where $\oooo{H}_m=\llll(H_n^{m-\aaaa/2} \rrrr)_{n=1}^{N-1}$.
In Theorem \ref{t29} we show that the implicit difference  approximations \eqref{FiniteDifferenceScheme} and \eqref{FiniteDifferenceScheme2} are unconditionally stable and have second order accuracy with respect to the  space and time variables. The proof relies on the positivity of the eigenvalues of the matrix $A$ and the lower bound \eqref{Estimate} for sums of Gr\"unwald weights.
\subsection{Numerical example}
We compute numerical solutions of the fractional sub-diffusion equation (1) with homogeneous initial and boundary conditions. The difference approximations \eqref{FiniteDifferenceScheme} and \eqref{FiniteDifferenceScheme2} have comparable properties. In some experiments the truncation error of \eqref{FiniteDifferenceScheme2} is smaller than the truncation error of \eqref{FiniteDifferenceScheme}, and it has slightly better overall performance. When
$$G(x,t)=\dddd{a(x)t^{1-\aaaa}}{\Gamma(2-\aaaa)}+\dddd{2b(x)t^{2-\aaaa}}{\Gamma(3-\aaaa)}+\dddd{\Gamma(3+\aaaa)c(x)t^{2}}{2}-a''(x)t-b''(x)t^2-c''(x)t^{2+\aaaa}$$
equation \eqref{HomogFDE} has solution
$$u(x,t)=a(x)t+b(x)t^2+c(x)t^{2+\aaaa}.$$
The first step is to compute the functions $L_1(x),L(x)$ and $L_2(x)$.
$$L_1(x)=\llll. \dddd{\partial^{1-\aaaa}G(x,t)}{t^{1-\aaaa}}\rrrr|_{t=0},\quad L(x)=\lim_{t\downarrow 0} \llll( \dddd{L_1 (x)}{\Gamma(1-\aaaa)t^\aaaa}-\dddd{\partial }{\partial t}G(x,t) \rrrr),$$
 $$L_2(x)=\lim_{t\downarrow 0}\llll(
\dddd{L(x)}{\Gamma(\aaaa)t^{1-\aaaa}}+\dfrac{\partial}{\partial t}\dddd{\partial^{1-\aaaa}G(x,t)}{t^{1-\aaaa}}\rrrr),$$
$$\dddd{\partial^{1-\aaaa}G(x,t)}{\partial t^{1-\aaaa}}=a(x)+2b(x)t+(2+\aaaa)c(x)t^{1+\aaaa}-\dfrac{a''(x)t^\aaaa}{\Gamma(1+\aaaa)}-\dfrac{2b''(x)t^{1+\aaaa}}{\Gamma(2+\aaaa)}-\dfrac{\Gamma(3+\aaaa)c''(x)t^{1+2\aaaa}}{\Gamma(2+2\aaaa)}.$$
By setting $t=0$ we obtain
$$L_1(x)=a(x),$$
$$\dfrac{\pppp}{\pppp t}G(x,t)=\dddd{a(x)t^{-\aaaa}}{\Gamma(1-\aaaa)}+
\dddd{2b(x)t^{1-\aaaa}}{\Gamma(2-\aaaa)}+\Gamma(3+\aaaa) c(x)t-a''(x)-2b''(x)t-(2+\aaaa)c''(x)t^{1+\aaaa},$$
$$L(x)=\lim_{t\downarrow 0} \llll( \dddd{a(x)}{\Gamma(1-\aaaa)t^\aaaa}-\dddd{\partial }{\partial t}G(x,t) \rrrr)=a''(x),$$
$$\dddd{\pppp}{\pppp t}\dddd{\partial^{1-\aaaa}G(x,t)}{\partial t^{1-\aaaa}}=
2b(x)+(1+\aaaa)(2+\aaaa)c(x)t^\aaaa-\dddd{a''(x)t^{\aaaa-1}}{\Gamma(\aaaa)}-\dddd{2b''(x)t^{\aaaa}}{\Gamma(1+\aaaa)}-
\dddd{\Gamma(3+\aaaa)c''(x)t^{2\aaaa}}{\Gamma(1+2\aaaa)},$$
$$L_2(x)=\lim_{t\downarrow 0}\llll(
\dddd{a''(x)}{\Gamma(\aaaa)t^{1-\aaaa}}+\dfrac{\partial}{\partial t}\dddd{\partial^{1-\aaaa}G(x,t)}{t^{1-\aaaa}}\rrrr)=2b(x).$$
Let 
$$v(x,t)=u(x,t)-L_1(x)t-\dddd{L_2(x)}{2}t^2=u(x,t)-a(x)t-b(x)t^2.$$
The function $v(x,t)$ is a solution of \eqref{VHomogFDE}, where
the function $H(x,t)$ is computed from $G(x,t)$ and the functions $L_1(x)$ and $L_2(x)$ with \eqref{H}
$$H(x,t)=\dddd{1}{2}\Gamma(3+\aaaa)c(x) t^2-c''(x) t^{2+\aaaa}$$
the fractional sub-diffusion equation \eqref{VHomogFDE} has solution
$$v(x,t)=c(x) t^{2+\aaaa}.$$
When $c(x)=2x^2(1-x)$ we obtain the following fractional diffusion equation
	\begin{equation}\label{V2HomogFDE}
	\left\{
	\begin{array}{l l}
\dfrac{\partial^\alpha v(x,t)}{\partial t^\alpha}=\dfrac{\partial^2 v(x,t)}{\partial x^2}+\Gamma(3+\aaaa)(1-x)x^2 t^2-4(3x-1) t^{2+\aaaa},& \\
	v(0,t)=v(1,t)=0, v(x,0)=0.&  \\
	\end{array} 
		\right . 
	\end{equation}
	 Equation \eqref{V2HomogFDE} has  solution
	$v(x,t)=2x^2(1-x)t^{2+\alpha}.$

 When $h=\tttt=0.1$ the error of difference approximation \eqref{FiniteDifferenceScheme2} for equation \eqref{V2HomogFDE} at time $t=1$ is  $0.00097075$. When $h= 0.1\& \tttt=0.05$ the     error is $0.000243735$.  Experimental results for the maximum error and order of approximation \eqref{FiniteDifferenceScheme2} at time  $t=1$ are given in Figure 5, Table 7 and Table 8.
\begin{figure}
\centering
\begin{minipage}{.45\textwidth}
  \centering
  \includegraphics[width=.95\linewidth]{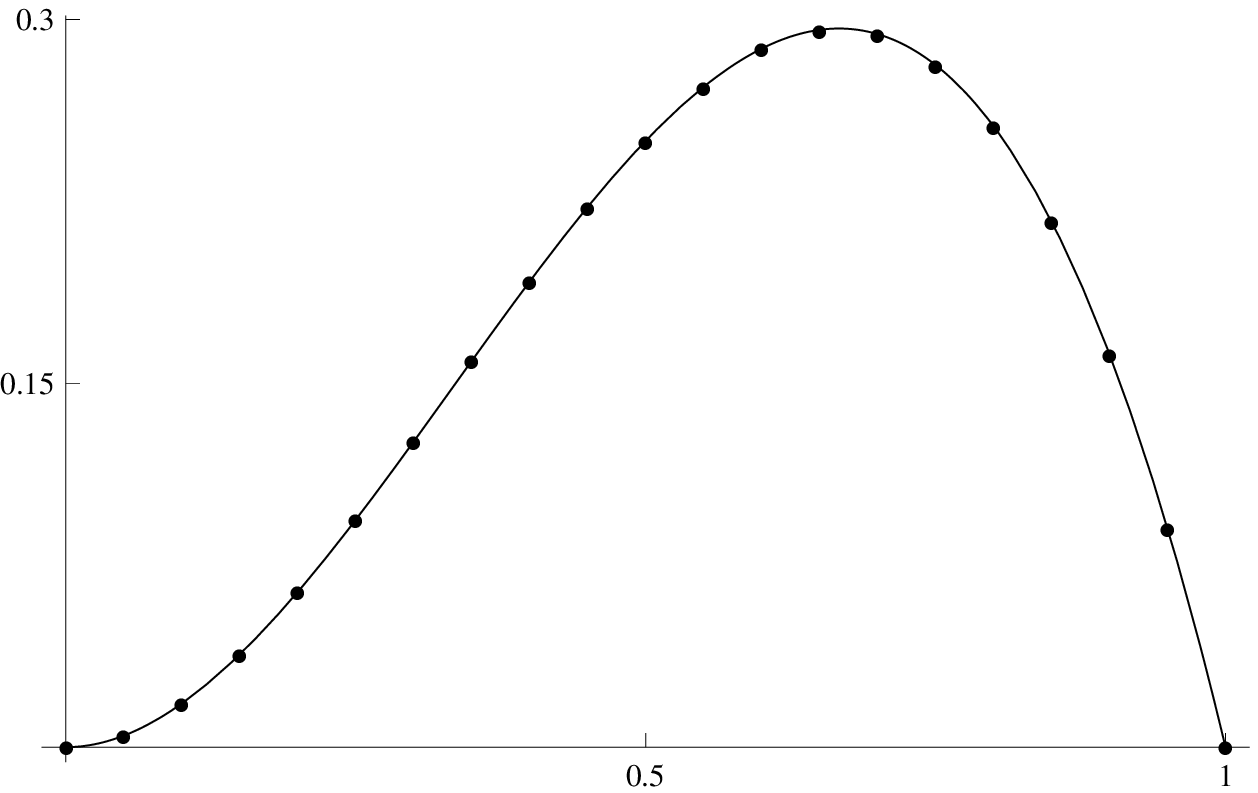}
  \label{fig:test3}
\end{minipage}%
\hspace{0.5cm}
\begin{minipage}{.45\textwidth}
  \centering
  \includegraphics[width=.95\linewidth]{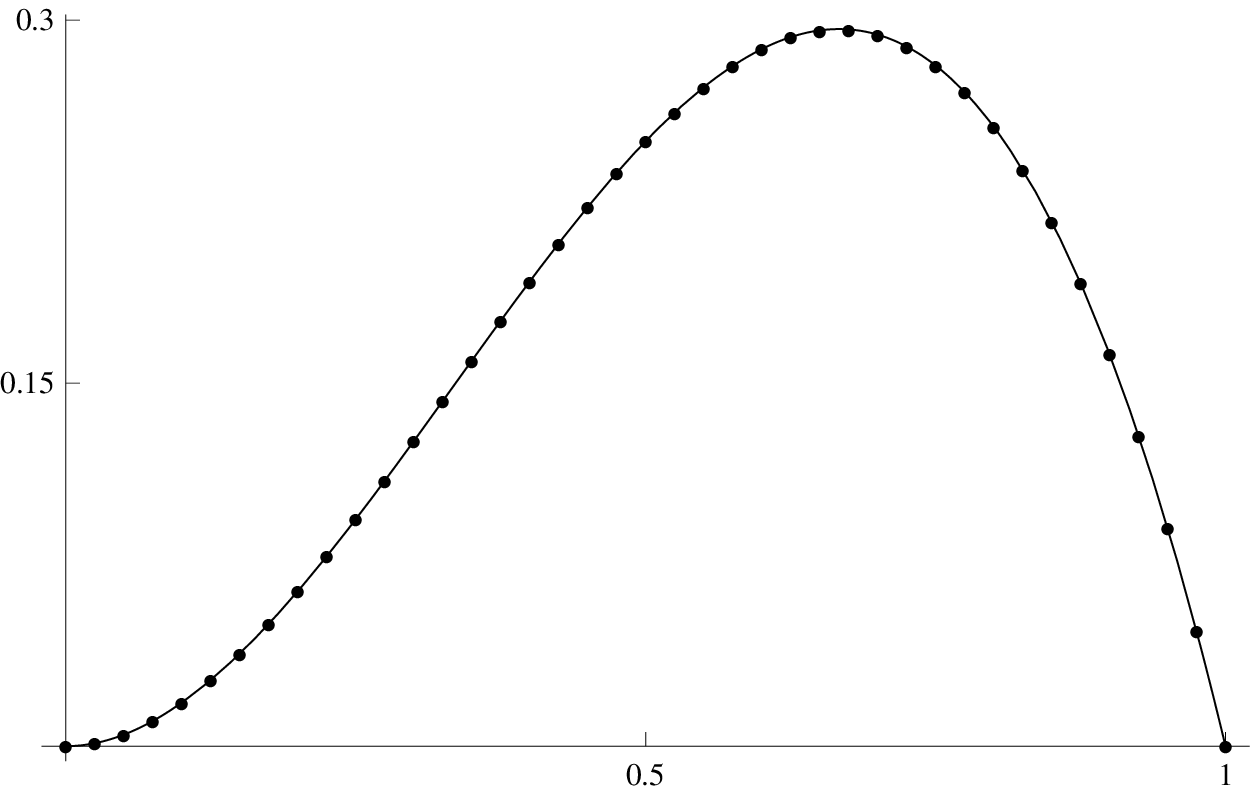}
  \label{fig:test4}
\end{minipage}
 \captionof{figure}{Graphs of the solution of equation \eqref{V2HomogFDE} and  approximation \eqref{FiniteDifferenceScheme2} for $\alpha=1/2$ and $h=\tttt=0.05$ (left)  and $h=0.025,\tttt=0.0125$  at time $t=1$.}
\end{figure}
	\begin{table}
	\caption{Maximum error and order of approximation \eqref{FiniteDifferenceScheme2}  for equation \eqref{V2HomogFDE} with $\alpha=1/2$ and $h=\tttt$ at time $t=1$.}
	\centering
  \begin{tabular}{ l  l l   l c }
    \hline \hline
    $h$  &$\tttt$  & $Error$                 & $Ratio$ & $Order$  
		\\ \hline \hline
$0.05$     &$0.05$     & $0.000244392$                 & $3.97210$   & $1.98990$   \\ 
$0.025$    &$0.025$    & $0.000061436$                 & $3.97802$   & $1.99205$   \\ 
$0.0125$   &$0.0125$   & $0.000015376$         & $3.99552 $  & $1.99838$   \\ 
$0.00625$  &$0.00625$  & $3.846\times10^{-6}$         & $3.99778$   & $1.99920$   \\ 
$0.003125$ &$0.003125$ & $9.619\times10^{-7}$         & $3.99865$   & $1.99951$   \\ \hline

  \end{tabular}
	\end{table}
		\begin{table}
	\caption{Maximum error and order of approximation \eqref{FiniteDifferenceScheme2}  for equation \eqref{V2HomogFDE} with $\alpha=1/2$ and $h=2\tttt$ at time $t=1$.}
	\centering
  \begin{tabular}{ l l l   l c }
    \hline \hline
    $h$  &  $\tttt$  & $Error$                 & $Ratio$ & $Order$  
		\\ \hline \hline
$0.05$     &$0.025$     & $0.000061232$               & $3.98055$   & $1.99297$   \\ 
$0.025$    &$0.0125$    & $0.000015376$               & $3.98235$   & $1.99362$   \\ 
$0.0125$   &$0.00625$   & $3.846\times 10^{-6}$       & $3.99770 $  & $ 1.99917$   \\ 
$0.00625$  &$0.003125$  & $9.618\times 10^{-7} $      & $3.99888$   & $1.99959$   \\ 
$0.003125$ &$0.0015625$ & $2.405\times 10^{-7}$       & $3.99920$   & $1.99971$   \\ \hline     
  \end{tabular}
	\end{table}
\subsection{Numerical Analysis}
Let $E_m=V_m-\wwww{V}_m$ be the error vectors for difference approximations \eqref{FiniteDifferenceScheme} or \eqref{FiniteDifferenceScheme2} at time $t_m=m\tttt$. The vectors $E_m$ are computed recursively  with $E_0=0$ and
$$P E_m=\aaaa Q E_{m-1}+\sum_{k=2}^{m-1} g_k^{(\alpha)} E_{m-k}+K_m,$$
where 
\begin{align}\label{matrices}
P=I+\llll(1-\frac{\aaaa}{2}\rrrr)\eta A, \quad Q=I-\dfrac{\eta}{2} A
\end{align}
and $K_m$ are the vectors of truncation errors at time $t=t_m$.
Define $S=P^{-1}$ and $R=SQ$. Then
\begin{align}\label{error}
E_m=\aaaa R E_{m-1}+\sum_{k=2}^{m-1} g_k^{(\alpha)} SE_{m-k}+S K_m.
\end{align}
In Theorem \ref{t29} we show that the vectors $E_m$ converge to zero with second order accuracy with respect to  $h$ and $\tttt$.

Let $B=(b_{nm})$ be a square matrix of order $N-1$. 
The maximum (infinity) norm  of $B$ is defined as
$$\llll\| B \rrrr\|=\max_{1\leq n\leq N-1} \sum_{m=1}^{N-1}|b_{nm}|$$
The vector and matrix norms satisfy  
\begin{equation}\label{norms}
\llll\| Bx\rrrr\|\leq \llll\| B\rrrr\|\llll\| x\rrrr\|.
\end{equation}
Let $\mu_1,\cdots,\mu_{N-1}$ be the  eigenvalues of  $B$.
The spectral radius of $B$ is the maximum of the absolute values of its eigenvalues.
$$\rho(B) = \max_{1\leq n\leq N-1}|\mu_n|.$$
 The matrices $P,Q,R$ and $S$ are symmetric and commute, because the matrix $A$ is symmetric and definition \eqref{matrices}. The matrix $P$ is a diagonally dominant M-matrix. Then the matrix $S=P^{-1}$ is  positive  and $\llll\| S\rrrr\|\leq 1$. 
The matrix $A$ has eigenvalues
$$\lambda_k=4 \sin^2 \llll( \dddd{k \pi}{N}\rrrr), \quad (k=1,\cdots,N-1).$$
The matrix $P$ has eigenvalues $1+\llll(1-\frac{\aaaa}{2}\rrrr) \eta \lambda_k$, and the eigenvalues of the matrix $P^{-1}$ are
 $$\llll(1+\llll(1-\frac{\aaaa}{2}\rrrr) \eta \lambda_k\rrrr)^{-1}.$$
Now we show that  $R$ is a convergent matrix.
\begin{lem}
$$\rho( R )<1.$$
\end{lem}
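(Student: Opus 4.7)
The plan is to diagonalize $R$ simultaneously with $A$. Since $A$ is symmetric and both $P$ and $Q$ are polynomials in $A$, the matrices $A$, $P$, $Q$, $S=P^{-1}$, and $R=SQ$ all commute and share a common orthonormal eigenbasis. From Corollary \ref{c18}, the eigenvalues of $A$ are $\lambda_k=4\sin^2(k\pi/N)>0$ for $k=1,\dots,N-1$, so the eigenvalues of $R$ are
$$\mu_k=\frac{1-\tfrac{\eta}{2}\lambda_k}{1+\bigl(1-\tfrac{\alpha}{2}\bigr)\eta\lambda_k}, \qquad k=1,\dots,N-1.$$
Because $R$ is symmetric, $\rho(R)=\max_k|\mu_k|$, so it suffices to show $|\mu_k|<1$ for every $k$.

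First I would observe that the denominator $1+(1-\alpha/2)\eta\lambda_k$ is strictly positive, since $0<\alpha<1$ implies $1-\alpha/2>0$ and $\eta,\lambda_k>0$. This lets me clear denominators without flipping inequalities. For the upper bound $\mu_k<1$, the inequality reduces after cancellation to
$$0<\eta\lambda_k\Bigl(\tfrac{3}{2}-\tfrac{\alpha}{2}\Bigr),$$
which is immediate from $\alpha<1$ and $\eta\lambda_k>0$. For the lower bound $\mu_k>-1$, the inequality reduces to
$$\eta\lambda_k\cdot\tfrac{\alpha-1}{2}<2,$$
whose left-hand side is negative (again because $\alpha<1$), so the bound holds unconditionally in $h$ and $\tau$.

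Combining the two strict inequalities gives $|\mu_k|<1$ for all $k$, hence $\rho(R)=\max_k|\mu_k|<1$, as required. The argument is essentially a one-line spectral computation, so there is no serious obstacle; the only point needing care is to verify that the restriction $0<\alpha<1$ is what drives both the positivity of the denominator and the sign in the lower-bound inequality, and that the bound is uniform in the discretization parameters $h,\tau$ (i.e.\ unconditional stability), which is exactly what the subsequent convergence proof in Theorem \ref{t29} will need.
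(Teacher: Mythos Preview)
Your proof is correct and follows essentially the same approach as the paper: compute the eigenvalues of $R$ explicitly from those of $A$ and verify that each has absolute value strictly less than $1$. The only tactical difference is that the paper squares the inequality $|\mu_k|<1$ and reduces it to a single sign check, whereas you handle the two one-sided bounds $\mu_k<1$ and $\mu_k>-1$ separately; your version is arguably a bit cleaner, but the underlying idea is identical.
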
 
\begin{proof}
The matrix $R=P^{-1}Q$ has eigenvalues,
$$\dfrac{1- \frac{\eta \lambda_k}{2}}{1+\llll(1-\frac{\aaaa}{2}\rrrr) \eta \lambda_k}.$$
Then
$$\llll|\dfrac{1- \frac{\eta \lambda_k}{2}}{1+\llll(1-\frac{\aaaa}{2}\rrrr) \eta \lambda_k}\rrrr|<1,\quad \llll|1- \frac{\eta \lambda_k}{2}\rrrr|<\llll|1+\llll(1-\frac{\aaaa}{2}\rrrr) \eta \lambda_k\rrrr|, $$
$$\llll(1- \frac{\eta \lambda_k}{2}\rrrr)^2<\llll(1+\llll(1-\frac{\aaaa}{2}\rrrr) \eta \lambda_k\rrrr)^2, $$
$$1-\eta \lambda_k+\dfrac{\eta^2 (\lambda_k)^2}{4}<1+2\llll(1-\frac{\aaaa}{2}\rrrr) \eta \lambda_k+\llll(1-\frac{\aaaa}{2}\rrrr)^2\eta^2(\lambda_k)^2,$$
$$-\llll(1+2\llll(1-\frac{\aaaa}{2}\rrrr)\rrrr)\eta \lambda_k<\llll(\llll(1-\frac{\aaaa}{2}\rrrr)^2-\dfrac{1}{4}\rrrr)\eta^2(\lambda_k)^2,$$
$$-\llll(3-\aaaa \rrrr)\eta \lambda_k<\dfrac{1}{4}(1-\aaaa)(3-\aaaa)\eta^2(\lambda_k)^2.$$
The above inequality holds because the left-hand side is negative and the right-hand side is positive.
\end{proof}
The norm and the spectral radius of the matrix $R$ satisfy
$$\rho(R)<\llll\| R\rrrr\|.$$
While the norm of $S$ is smaller than one, the norm of the matrix $R$ may be greater than one. In the proof of Lemma \ref{l28} we use the following property of convergent matrices.
\begin{lem} \label{l20} There exists a positive integer $J$ such that
$$\llll\|R^k S^{J-k}\rrrr\|<1$$
for all $0\leq k\leq J$.
\end{lem}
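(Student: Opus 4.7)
My plan is to exploit simultaneous diagonalizability. Because $P$, $Q$, $S=P^{-1}$ and $R=SQ$ are polynomial or rational functions of the single symmetric tridiagonal matrix $A$, they are pairwise commuting symmetric matrices sharing an orthonormal eigenbasis (both facts already noted just before the statement). On that basis $R^k S^{J-k}$ is itself diagonal with entries $r_j^k s_j^{J-k}$, where $r_j$ and $s_j$ denote the eigenvalues of $R$ and $S$ attached to the $j$-th common eigenvector. Controlling $\|R^k S^{J-k}\|$ therefore reduces to controlling these scalar products.

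The crux is to establish that $|r_j|$ and $s_j$ are both uniformly strictly less than $1$. Corollary~\ref{c18} gives $\lambda_j=4\sin^2(j\pi/N)>0$ for $j=1,\dots,N-1$, so
$$ s_j=\frac{1}{1+(1-\aaaa/2)\eta\lambda_j}\in(0,1),$$
and the immediately preceding lemma shows $|r_j|<1$. Because the index set is finite, the constant $\rho^{\star}:=\max_j\max\{|r_j|,s_j\}$ is strictly less than $1$, and splitting exponents yields $|r_j|^k s_j^{J-k}\le(\rho^{\star})^J$ for every $0\le k\le J$.

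I would then convert this eigenvalue bound into an infinity-norm bound. Since $R^k S^{J-k}$ is symmetric, its spectral $2$-norm equals its spectral radius, so $\|R^k S^{J-k}\|_2\le(\rho^{\star})^J$. The standard norm equivalence $\|B\|_\infty\le\sqrt{N-1}\,\|B\|_2$ on $(N-1)\times(N-1)$ matrices then gives
$$\|R^k S^{J-k}\|_\infty\le\sqrt{N-1}\,(\rho^{\star})^J,$$
and any $J$ for which $\sqrt{N-1}\,(\rho^{\star})^J<1$ will work. The only real delicacy is securing $\rho^{\star}<1$ as a single uniform constant: this needs the preceding lemma to give a \emph{strict} inequality $|r_j|<1$ (which it does) and no $\lambda_j$ to vanish (also clear, since $j\in\{1,\dots,N-1\}$ and $\sin(j\pi/N)\neq 0$). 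Once $\rho^{\star}$ is pinned strictly below $1$, the existence of $J$ is immediate, and the bound is uniform in $k$ precisely because the exponents of the two bounded-by-$\rho^{\star}$ factors always sum to $J$.
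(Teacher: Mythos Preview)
Your argument is correct, but it proceeds quite differently from the paper's. The paper never touches the eigenstructure of $R^kS^{J-k}$; it argues purely from convergence and submultiplicativity. Since $R$ and $S$ are convergent matrices, $\|R^k\|\to 0$ and $\|S^k\|\to 0$; the paper fixes a bound $C_R\ge\|R^k\|$ for all $k$, picks $J'$ with $\|R^k\|<1$ and $\|S^k\|<1/C_R$ whenever $k\ge J'$, takes $J>2J'$, and then splits: if $k\ge J'$ one uses $\|R^k\|<1$ and $\|S\|\le 1$; if $k<J'$ then $J-k>J'$ and one uses $\|R^k\|\le C_R$ together with $\|S^{J-k}\|<1/C_R$.

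Your route is more structural and more quantitative: by simultaneous diagonalization you control the spectral radius of $R^kS^{J-k}$ directly as $(\rho^\star)^J$, pass to the $2$-norm via symmetry, and absorb a factor $\sqrt{N-1}$ to reach the $\infty$-norm, yielding an explicit threshold $J>\tfrac{\log(N-1)}{2\log(1/\rho^\star)}$. The paper's argument is more portable (it would work for any pair of convergent matrices with $\|S\|\le 1$, without needing symmetry or a shared eigenbasis), while yours gives a concrete dependence of $J$ on $N$ and the spectral gap. Both are valid; note only that the paper later imposes a further growth condition on $J$ (its inequality \eqref{JJ}), so in the overall analysis $J$ must in any case be enlarged beyond what the lemma alone requires.
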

\begin{proof} The matrices $R$ and $S$ are convergent matrices. Then 
$$ \lim_{k\rightarrow \infty} \llll\|R^k \rrrr\|=
\lim_{k\rightarrow \infty} \llll\|S^k \rrrr\|=0.$$
The sequence $\{\llll\|R^k \rrrr\|\}_{k=0}^\infty$ is bounded. Let $C_R$ be a positive constant such that
$$\llll\|R^k \rrrr\|<C_R,\quad (k=1,2,\dots)$$
and
\begin{align}\label{CR}
C_R>\max\llll\{K,\frac{5K}{(1-\aaaa)2^\aaaa}\rrrr\}.
\end{align}
The number $C_R$ is greater than one, because $K>1$.
Let  $J'$ be a positive integer such that
$$\llll\|R^k \rrrr\|<1,\quad \llll\|S^k \rrrr\|<\dddd{1}{C_R},\quad (k\geq J')$$
Choose $J>2J'$. When $k\geq J' $ we have
$$\llll\|R^k S^{J-k}\rrrr\|\leq\llll\|R^k \rrrr\|\llll\|S^{J-k}\rrrr\|\leq \llll\|R^k \rrrr\|\llll\|S\rrrr\|^{J-k}<1.
$$
If $k < J' $ then $J-k > J' $ and
$$\llll\|R^k S^{J-k}\rrrr\|\leq\llll\|R^k \rrrr\|\llll\|S^{J-k}\rrrr\|<
C_R. \dddd{1}{C_R}=1.
$$
\end{proof}
In addition we require that the number $J$ is large enough such that the following inequality is satisfied
\begin{align}\label{JJ}
(C_R)^J>(\aaaa J C_R+1)K.
\end{align}
Such number exists because $C_R>K>1$ and the exponential function $(C_R)^J$ grows faster than the linear function $(\aaaa J C_R+1)K$. We use properties  \eqref{CR} and \eqref{JJ} of the numbers $C_R$ and $J$ in Corollary \ref{c25} and Lemma \ref{l26}. Denote
$$\Phi_m^{(\aaaa)}=\sum_{k=2}^{m-1} g_k^{(\alpha)} S E_{m-k}.$$
The vectors $E_m$ and $E_{m-1}$ are computed recursively as
\begin{align}\label{equal1}
E_m=\aaaa R E_{m-1}+\Phi_m^{(\aaaa)}+S K_{m},
\end{align}
$$E_{m-1}=\aaaa R E_{m-2}+\Phi_{m-1}^{(\aaaa)}+S K_{m-1}.$$
Then
$$E_m=\aaaa R\llll(\aaaa R E_{m-2}+\Phi_{m-1}^{(\aaaa)}+S K_{m-1}\rrrr)+\Phi_m^{(\aaaa)}+S K_m,$$
\begin{align}\label{equal2}
E_m=\aaaa^2 R^2 E_{m-2}+  \sum_{k=2}^{m-2} \aaaa g_k^{(\alpha)} R S E_{m-k-1}+\Phi_m^{(\aaaa)}+\aaaa R S K_{m-1}+S K_m.
\end{align}
In Lemma \ref{l21} we define the numbers $\bbbb^{(m)}_{n,k,i}$ and the vectors $A_{m,n}$ recursively with \eqref{F1}, \eqref{F2} and \eqref{approx_m}. The boundary values of $\bbbb^{(m)}_{n,k,i}$ and  $A_{m,n}$ are
	\begin{equation*}
	\left\{
	\begin{array}{l l}
	\bbbb^{(m)}_{n,0,m-n}=\aaaa^n,&m\geq 1,  \\
	\bbbb^{(m)}_{n,0,i}=0,& i\neq m-n, \\
	\bbbb^{(m)}_{n,k,i}=0,& k<0,k\geq n,
	\end{array} \label{OrdinaryFractionalDiffEqn}
		\right . 
	\end{equation*}
$$A_{m,0}=S K_m,\quad A_{m,1}=\aaaa R S K_{m-1}+S K_m.$$
\begin{lem} \label{l21} There exist positive numbers $\bbbb^{(m)}_{n,k,i}$ such that the error vector $E_m$ can be represented as
\begin{align}\label{F1}
E_m=\aaaa^n R^n E_{m-n}+\sum_{k=1}^{n-1}\sum_{i=1}^{m-n-k}
\bbbb^{(m)}_{n,k,i}R^{n-k}S^k E_i+\Phi_m^{(\aaaa)}+A_{m,n}.
\end{align}
The numbers $\bbbb^{(m)}_{n,k,i}$  and the vectors $A_{m,n}$ are computed recursively as
\begin{align}\label{F2}
\bbbb^{(m)}_{n+1,k,i}=\aaaa \bbbb^{(m)}_{n,k,i+1}+\sum_{j=2}^{m-n-k-i+1} g_j ^{(\aaaa)}
\bbbb^{(m)}_{n,k-1,j+i},
\end{align}
\begin{align} \label{approx_m}
A_{m,n+1}=A_{m,n}+\aaaa^{n}R^n S K_{m-n}+\sum_{k=1}^{n-1}\sum_{i=1}^{m-n-k}\bbbb^{(m)}_{n,k,i}R^{n-k}S^{k+1}K_i.
\end{align}
\end{lem}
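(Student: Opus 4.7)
The plan is to prove \eqref{F1} by induction on $n$, using the basic one-step recurrence \eqref{equal1}, namely $E_i=\alpha R E_{i-1}+\Phi_i^{(\alpha)}+SK_i$ (valid for every index $i\ge 1$, not only for $i=m$), as the engine of the induction. The base case $n=1$ is \eqref{equal1} itself, and $n=2$ is precisely the calculation leading to \eqref{equal2} already recorded above; these two base cases simultaneously fix the boundary values of $\beta^{(m)}_{n,0,\cdot}$ and $A_{m,n}$ that seed the recursions \eqref{F2} and \eqref{approx_m}.

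For the inductive step from $n$ to $n+1$, I would first unify notation by viewing the ``head'' term $\alpha^n R^n E_{m-n}$ as the $k=0$ slice of a sum with coefficient $\beta^{(m)}_{n,0,m-n}=\alpha^n$ (and zero for $i\neq m-n$), so that \eqref{F1} becomes $E_m=\sum_{k=0}^{n-1}\sum_i \beta^{(m)}_{n,k,i}R^{n-k}S^k E_i+\Phi_m^{(\alpha)}+A_{m,n}$. I then apply \eqref{equal1} to each $E_i$ on the right while leaving $\Phi_m^{(\alpha)}$ untouched (this convolution is the same across all $n$ and represents the background contribution of the lagging Gr\"unwald weights). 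Each substitution yields three pieces: the $\alpha R E_{i-1}$ piece contributes $\alpha \beta^{(m)}_{n,k,i'+1}$ to the coefficient of $R^{n+1-k}S^k E_{i'}$ after reindexing $i'=i-1$; the $\sum_{j\ge 2} g_j^{(\alpha)}S E_{i-j}$ piece contributes $g_j^{(\alpha)}\beta^{(m)}_{n,k,i'+j}$ to the coefficient of $R^{n-k}S^{k+1}E_{i'}$ after reindexing $i'=i-j$; and the $SK_i$ piece is diverted into $A_{m,n+1}$. Collecting both kinds of contributions at each fixed pair $(k',i')$ in the new representation reproduces \eqref{F2} verbatim; in particular, the new $k=1$ entries absorb the contribution $g_{m-n-i}^{(\alpha)}\alpha^n$ coming from the head term via $\beta^{(m)}_{n,0,m-n}=\alpha^n$. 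The $SK_i$ pieces accumulate into $A_{m,n+1}=A_{m,n}+\alpha^n R^n SK_{m-n}+\sum_{k=1}^{n-1}\sum_{i=1}^{m-n-k}\beta^{(m)}_{n,k,i}R^{n-k}S^{k+1}K_i$, matching \eqref{approx_m}. Positivity of $\beta^{(m)}_{n+1,k,i}$ is then an immediate corollary, since \eqref{F2} expresses it as a sum of products of positive quantities $\alpha$, $g_j^{(\alpha)}$, and (inductively positive) $\beta^{(m)}_{n,\cdot,\cdot}$.

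The main obstacle I anticipate is the index bookkeeping: verifying that the upper limit $m-n-k$ of the $i$-sum in \eqref{F1} contracts correctly to $m-(n+1)-k$ in the new representation, checking that the $j$-summation range in \eqref{F2} is tight so that no contribution is missed from the head term (which sits at index $m-n$ and must be reached by $j+i'=m-n$, i.e. $j=m-n-i'$), and verifying that $\Phi_m^{(\alpha)}$ is genuinely invariant under the substitution, i.e., that the new $E_{i-j}$'s introduced by applying \eqref{equal1} to the sum terms are algebraically disjoint from those already packaged inside $\Phi_m^{(\alpha)}$, so that no double-counting occurs when matching against the $n+1$ formula.
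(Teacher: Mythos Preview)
Your proposal is correct and follows essentially the same approach as the paper: induction on $n$ with base cases from \eqref{equal1} and \eqref{equal2}, substituting the one-step recurrence into each $E_i$ appearing on the right, then tracking how coefficients of $R^{n-k}S^k$ and $R^{n-k}S^{k+1}$ recombine to yield \eqref{F2} while the $SK_i$ pieces accumulate into \eqref{approx_m}. Your device of absorbing the head term as the $k=0$ slice with $\beta^{(m)}_{n,0,m-n}=\alpha^n$ is exactly the boundary convention the paper adopts, and the paper carries out the same reindexing $l=i-j$ you describe to verify the $j$-summation range in \eqref{F2}.
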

\begin{proof} We prove that \eqref{F2} and \eqref{approx_m} hold by induction on $n$. From the definition of $\bbbb^{(m)}_{n,k,i},A_{m,n}$ and formulas \eqref{equal1} and \eqref{equal2} we have that \eqref{F2} and \eqref{approx_m} hold for $n=1$ and $n=2$. Suppose that \eqref{F2} and \eqref{approx_m} hold for all $n\leq \oooo{n}$.
\begin{align*}
E_m=\aaaa^{\oooo{n}} R^{\oooo{n}} E_{m-\oooo{n}}+\sum_{k=1}^{\oooo{n}-1}\sum_{i=1}^{m-n-k}
\bbbb^{(m)}_{\oooo{n},k,i}R^{\oooo{n}-k}S^k E_i+\Phi_m^{(\aaaa)}+A_{m,\oooo{n}}.
\end{align*}
By substituting the vectors $E_1,E_2,\cdots,E_{m-\oooo{n}}$ with \eqref{error} we get
\begin{align}\label{iter} \nonumber
E_m=& \aaaa^{\oooo{n}} R^{\oooo{n}} \llll(\aaaa R E_{m-\oooo{n}-1}+\sum_{k=2}^{m-\oooo{n}-1} g_k^{(\alpha)} S E_{m-\oooo{n}-k}+SK_{m-\oooo{n}} \rrrr)+\Phi_m^{(\aaaa)}+A_{m,\oooo{n}}\\
&+\sum_{k=1}^{\oooo{n}-1}\sum_{i=1}^{m-\oooo{n}-k}\bbbb_{\oooo{n},k,i}^{(m)}R^{\oooo{n}-k}S^k
\llll(\aaaa R E_{i-1}+\sum_{k=2}^{i-1} g_k^{(\alpha)} S E_{i-k}+S K_i \rrrr).
\end{align}
The formula for recursive computation \eqref{approx_m} of the vectors $A_{m,\oooo{n}}$ of approximation errors is obtained from \eqref{iter} as the sum of the approximation errors.
The coefficient of $R^{\oooo{n}+1-k}S^k$ in formula \eqref{F1} with $n=\oooo{n}+1$
$$\sum_{i=1}^{m-\oooo{n}-k-1} \bbbb^{(m)}_{\oooo{n}+1,k,i}E_i$$
is determined from the coefficients of $R^{\oooo{n}-k}S^k$ and $R^{\oooo{n}+1-k}S^{k-1}$ in \eqref{F1} with $n=\oooo{n}$. The coefficient of  $R^{\oooo{n}-k}S^k$ is
$$\sum_{i=1}^{m-\oooo{n}-k} \bbbb^{(m)}_{\oooo{n},k,i}E_i.$$
After one iteration the coefficient becomes
$$\sum_{i=1}^{m-\oooo{n}-k} \aaaa \bbbb^{(m)}_{\oooo{n},k,i}E_{i-1}=\sum_{i=1}^{m-\oooo{n}-k-1} \aaaa \bbbb^{(m)}_{\oooo{n},k,i+1}E_{i}.$$
Similarly, the coefficient of  $R^{\oooo{n}-k+1}S^{k-1}$ is initially
$$\sum_{i=1}^{m-\oooo{n}-k+1} \bbbb^{(m)}_{\oooo{n},k-1,i}E_i.$$
After one iteration it becomes
$$\sum_{i=1}^{m-\oooo{n}-k+1} \bbbb^{(m)}_{\oooo{n},k-1,i} \sum_{j=2}^{i-1}g_j^{(\aaaa)}E_{i-j}= \sum_{i=3}^{m-\oooo{n}-k+1} \sum_{j=2}^{i-1} g_j^{(\aaaa)}\bbbb^{(m)}_{\oooo{n},k-1,i} E_{i-j}.$$
By substituting $l=i-j$ we obtain
\begin{equation}\label{change}
\sum_{i=3}^{m-\oooo{n}-k+1} \sum_{j=2}^{i-1} g_j^{(\aaaa)} \bbbb^{(m)}_{\oooo{n},k-1,i} E_{i-j}
=\sum_{l=1}^{m-\oooo{n}-k-1}\sum_{j=2}^{m-\oooo{n}-k-l+1}g_j^{(\aaaa)}\bbbb^{(m)}_{\oooo{n},k-1,j+l} E_l.
\end{equation}
Then
$$\sum_{i=1}^{m-\oooo{n}-k-1} \bbbb^{(m)}_{\oooo{n}+1,k,i}E_i=
\sum_{i=1}^{m-\oooo{n}-k-1} \aaaa \bbbb^{(m)}_{\oooo{n},k,i+1}E_{i}+\sum_{i=1}^{m-\oooo{n}-k-1}\sum_{j=2}^{m-\oooo{n}-k-i+1}g_j^{(\aaaa)}\bbbb^{(m)}_{\oooo{n},k-1,j+i} E_i,$$
$$\sum_{i=1}^{m-\oooo{n}-k-1} \bbbb^{(m)}_{\oooo{n}+1,k,i}E_i=
\sum_{i=1}^{m-\oooo{n}-k-1}\llll( \aaaa \bbbb^{(m)}_{\oooo{n},k,i+1}+\sum_{j=2}^{m-\oooo{n}-k-i+1}g_j^{(\aaaa)}\bbbb^{(m)}_{\oooo{n},k-1,j+i} \rrrr)E_i.$$
The coefficients of $E_i$ are equal. Therefore
$$ \bbbb^{(m)}_{\oooo{n}+1,k,i}=\aaaa \bbbb^{(m)}_{\oooo{n},k,i+1}+\sum_{j=2}^{m-\oooo{n}-k-i+1}g_j^{(\aaaa)}\bbbb^{(m)}_{\oooo{n},k-1,j+i}.$$
We use \eqref{F2} for recursive computation of all coefficients $\bbbb^{(m)}_{n+1,k,i}$ for $k=1,\cdots,n-1$. The formula also hols in the boundary cases $k=0$ and $k=n$. When $k=0$ we have,
$$ \bbbb^{(m)}_{n+1,0,i}=\aaaa \bbbb^{(m)}_{n,0,i+1}+\sum_{j=2}^{m-n-k-i+1}g_j^{(\aaaa)}\bbbb^{(m)}_{n,-1,j+i}=\aaaa \bbbb^{(m)}_{n,0,i+1} $$
because $\bbbb^{(m)}_{n,-1,j+i}=0$. Then 
$$\bbbb^{(m)}_{n+1,0,m-n-1}=\aaaa \bbbb^{(m)}_{n,0,m-n}=\aaaa^{n+1}.$$
When $k=n$,
$$ \bbbb^{(m)}_{n+1,n,i}=\aaaa \bbbb^{(m)}_{n,n,i+1}+\sum_{j=2}^{m-2n-i+1}g_j^{(\aaaa)}\bbbb^{(m)}_{n,n-1,j+i}=\sum_{j=2}^{m-2n-k-i+1}g_j^{(\aaaa)}\bbbb^{(m)}_{n,n-1,j+i}. $$
\end{proof}
A more convenient way to write formulas \eqref{F1} and \eqref{approx_m} is
\begin{align}\label{FF1}
E_m=\sum_{k=0}^{n-1}\sum_{i=1}^{m-n-k}
\bbbb^{(m)}_{n,k,i}R^{n-k}S^k E_i+\Phi_m^{(\aaaa)}+A_{m,n},
\end{align}
$$A_{m,n+1}=A_{m,n}+\sum_{k=0}^{n-1}\sum_{i=1}^{m-n-k}\bbbb^{(m)}_{n,k,i} R^{n-k}S^{k+1} K_i.$$
Denote,
$$\wwww{\bbbb^{(m)}}_n=\sum_{k=0}^{n-1}\sum_{i=1}^{m-n-k} \bbbb^{(m)}_{n,k,i}.$$
\begin{cor} The sequence $\wwww{\bbbb^{(m)}}_n$ is decreasing. 
\end{cor}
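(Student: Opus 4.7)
The strategy is to expand $\wwww{\bbbb^{(m)}}_{n+1}$ using the recurrence \eqref{F2} for $\beta^{(m)}_{n+1,k,i}$, regroup the resulting sum so that the total coefficient of each $\beta^{(m)}_{n,k,l}$ becomes visible, and then verify that this coefficient is strictly less than one. The key observation is that $g_1^{(\alpha)}=-w_1^{(\alpha)}=\alpha$, so the factor $\alpha$ from one piece of the recurrence fuses with the tail $\sum_{j\geq 2}g_j^{(\alpha)}$ from the other piece into a partial sum of $\sum_{j\geq 1}g_j^{(\alpha)}=1$.

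Concretely, I would substitute the recurrence into $\wwww{\bbbb^{(m)}}_{n+1}=\sum_{k=0}^{n}\sum_{i=1}^{m-n-1-k}\beta^{(m)}_{n+1,k,i}$ and split the result into two pieces. The first piece $\alpha\sum_{k,i}\beta^{(m)}_{n,k,i+1}$, after the shift $l=i+1$ and the observation that $\beta^{(m)}_{n,n,\cdot}=0$ (so the $k=n$ slice vanishes), contributes
$$\alpha\sum_{k=0}^{n-1}\sum_{l=2}^{m-n-k}\beta^{(m)}_{n,k,l}.$$
The second piece, after setting $k'=k-1$, introducing $l=i+j$, and swapping the order of summation, contributes
$$\sum_{k'=0}^{n-1}\sum_{l=3}^{m-n-k'}\beta^{(m)}_{n,k',l}\sum_{j=2}^{l-1}g_j^{(\alpha)}.$$
Combining the two pieces and absorbing $\alpha=g_1^{(\alpha)}$ into the inner sum yields
$$\wwww{\bbbb^{(m)}}_{n+1}=\sum_{k=0}^{n-1}\sum_{l=2}^{m-n-k}\beta^{(m)}_{n,k,l}\sum_{j=1}^{l-1}g_j^{(\alpha)}.$$

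From here the conclusion is immediate from \eqref{series}. Since $g_j^{(\alpha)}=-w_j^{(\alpha)}>0$ for every $j\geq 1$ and $\sum_{j=1}^{\infty}g_j^{(\alpha)}=1$, every finite partial sum $\sum_{j=1}^{l-1}g_j^{(\alpha)}$ is strictly less than one. A straightforward induction on $n$ using \eqref{F2}, starting from $\beta^{(m)}_{n,0,m-n}=\alpha^n>0$ and the positivity of the $g_j^{(\alpha)}$, shows that all weights $\beta^{(m)}_{n,k,l}$ are nonnegative; and because $\beta^{(m)}_{n,0,m-n}=\alpha^n>0$ lies inside the range of summation (for $n\leq m-2$), we obtain
$$\wwww{\bbbb^{(m)}}_{n+1}<\sum_{k=0}^{n-1}\sum_{l=2}^{m-n-k}\beta^{(m)}_{n,k,l}\leq\sum_{k=0}^{n-1}\sum_{l=1}^{m-n-k}\beta^{(m)}_{n,k,l}=\wwww{\bbbb^{(m)}}_{n}.$$

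The main obstacle is not conceptual but clerical: tracking the admissible index ranges when swapping the orders of summation and introducing the new index $l=i+j$, and making sure the boundary slices $k=0$, $k=n$ and $l=m-n-k$ are handled correctly so that no entry outside the declared support of $\beta^{(m)}_{n,\cdot,\cdot}$ is accidentally included in the rewritten sum. Once the bookkeeping is in place, the monotonicity is essentially a restatement of the normalization $\sum_{j\geq 1}g_j^{(\alpha)}=1$.
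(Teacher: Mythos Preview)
Your proposal is correct and follows essentially the same approach as the paper: expand $\wwww{\bbbb^{(m)}}_{n+1}$ via the recurrence \eqref{F2}, change variables, and bound the total coefficient of each $\bbbb^{(m)}_{n,k,l}$ by $1$ using $\sum_{j\geq 1} g_j^{(\aaaa)}=1$. The only cosmetic difference is that the paper keeps the two pieces separate and bounds them by $\aaaa$ and $1-\aaaa$ respectively, whereas you merge them by observing $g_1^{(\aaaa)}=\aaaa$ and work with the single partial sum $\sum_{j=1}^{l-1} g_j^{(\aaaa)}<1$; your bookkeeping is arguably tidier but the content is identical.
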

\begin{proof}
\begin{align*}
\wwww{\bbbb^{(m)}}_{n+1}=&\sum_{k=0}^n \sum_{i=1}^{m-n-k-1}\bbbb^{(m)}_{n+1,k,i}=\\
&\sum_{k=0}^n\llll( \aaaa  \sum_{i=1}^{m-n-k-1}\bbbb^{(m)}_{n,k,i+1}+
\sum_{i=1}^{m-n-k-1}\sum_{j=2}^{m-n-k-i+1} 
g_j^{(\aaaa)}\bbbb^{(m)}_{n,k-1,j+i}\rrrr).
\end{align*}
Substitute $l=i+j$.
$$\wwww{\bbbb^{(m)}}_{n+1}=\sum_{k=0}^n\llll(\aaaa \sum_{i=1}^{m-n-k-1} \bbbb^{(m)}_{n,k,i+1} +
\sum_{i=3}^{m-n-k}\sum_{l=2}^{i-1}\bbbb^{(m)}_{n,k-1,i}g_l^{(\aaaa)} \rrrr).$$
We have that
$$\sum_{l=2}^{i-1}g_l^{(\aaaa)}<\sum_{l=2}^{\infty}g_l^{(\aaaa)}=1-\aaaa.$$
Then
$$\wwww{\bbbb^{(m)}}_{n+1}<\sum_{k=0}^n\llll(\aaaa \sum_{i=2}^{m-n-k} \bbbb^{(m)}_{n,k,i} +(1-\aaaa)
\sum_{i=3}^{m-n-k}\bbbb^{(m)}_{n,k,i} \rrrr),$$
$$\wwww{\bbbb^{(m)}}_{n+1}<\sum_{k=0}^n \sum_{i=1}^{m-n-k} \bbbb^{(m)}_{n,k,i}=\wwww{\bbbb^{(m)}}_{n}.$$
\end{proof}
The value of $\wwww{\bbbb^{(m)}}_{1}$ is
$$\wwww{\bbbb^{(m)}}_{1}=\sum_{i=1}^{m-1}\bbbb_{1,0,i}=\bbbb_{1,0,m-1}=\aaaa.$$
\begin{cor} (Estimate for sums of coefficients of \eqref{FF1}) 
$$\sum_{k=0}^{n-1}\sum_{i=1}^{m-n-k} \bbbb^{(m)}_{n,k,i}<\aaaa.$$
\end{cor}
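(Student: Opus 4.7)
The plan is to deduce this estimate directly from the two results stated immediately before it, namely the recursion from Lemma~21 (which fixes the boundary values of $\bbbb^{(m)}_{n,k,i}$) and the preceding (unnumbered) corollary establishing that $\wwww{\bbbb^{(m)}}_n$ is a strictly decreasing sequence. The whole argument is a base case plus a monotonicity step.

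First I would compute $\wwww{\bbbb^{(m)}}_1$. By the boundary conditions $\bbbb^{(m)}_{1,0,m-1}=\aaaa$ and $\bbbb^{(m)}_{1,0,i}=0$ for $i\neq m-1$, so
$$\wwww{\bbbb^{(m)}}_1=\sum_{i=1}^{m-1}\bbbb^{(m)}_{1,0,i}=\aaaa,$$
which is already noted in the paragraph just above the statement. Next, applying the preceding corollary gives $\wwww{\bbbb^{(m)}}_{n+1}<\wwww{\bbbb^{(m)}}_n$ for every $n\geq1$. Iterating the strict inequality from $n=1$ up to the target index yields, for $n\geq 2$,
$$\wwww{\bbbb^{(m)}}_n<\wwww{\bbbb^{(m)}}_{n-1}<\cdots<\wwww{\bbbb^{(m)}}_1=\aaaa,$$
which is exactly the claim.

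There is no real obstacle here beyond the bookkeeping in the previous corollary, which has already been carried out. The one subtlety worth flagging is that the conclusion as displayed is a strict inequality, but the base case $n=1$ gives equality $\wwww{\bbbb^{(m)}}_1=\aaaa$; so the statement is to be read either for $n\geq 2$ (which is the relevant range when the estimate is applied to propagate truncation errors through several time steps in the stability analysis of approximations \eqref{FiniteDifferenceScheme} and \eqref{FiniteDifferenceScheme2}) or with $\leq$ in place of $<$. Either reading follows at once from the two ingredients above.
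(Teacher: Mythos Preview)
Your proposal is correct and matches the paper's own proof essentially verbatim: the paper also just notes that the sequence $\wwww{\bbbb^{(m)}}_n$ is decreasing and that $\wwww{\bbbb^{(m)}}_1=\aaaa$, concluding $\wwww{\bbbb^{(m)}}_n<\aaaa$. Your remark about the $n=1$ edge case (equality rather than strict inequality) is a valid observation that the paper glosses over; as you say, the result is applied with $n=J>1$, so this causes no trouble downstream.
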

\begin{proof}  The sequence $\llll\{\wwww{\bbbb^{(m)}}_{n}\rrrr\}$ is decreasing. Then 
$$\bbbb^{(m)}_{n}<\wwww{\bbbb}^{(m)}_1=\aaaa.$$
\end{proof}
\begin{lem} (Estimate for the norm of $A_{m,n}$)
\begin{equation}\label{L1}
\llll\|A_{m,n}\rrrr\|\leq (n\aaaa C_R+1)K\tttt^\aaaa \llll(\tttt^2+h^2\rrrr).
\end{equation}
\end{lem}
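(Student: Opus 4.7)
The plan is to prove the bound by induction on $n$, exploiting the recursive formula for $A_{m,n}$ from Lemma 21 together with three ingredients already established in the excerpt: first, $\|S\|\leq 1$ (since $P=I+(1-\aaaa/2)\eta A$ is a diagonally dominant M-matrix, making $S=P^{-1}$ nonnegative with row sums bounded by $1$); second, $\|R^k\|<C_R$ for every $k\geq 0$, which is guaranteed by the choice of $C_R$ in \eqref{CR}; and third, the uniform estimate $\sum_{k=0}^{n-1}\sum_{i=1}^{m-n-k}\bbbb^{(m)}_{n,k,i}<\aaaa$ from the Corollary immediately preceding this lemma. The truncation estimate $\|K_i\|<K\tttt^\aaaa(\tttt^2+h^2)$ is already built into the definition of the vectors $K_m$.

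For the base case $n=0$, the definition $A_{m,0}=SK_m$ together with $\|S\|\leq 1$ gives
$$\|A_{m,0}\|\leq\|S\|\cdot\|K_m\|\leq K\tttt^\aaaa(\tttt^2+h^2),$$
which matches the right-hand side of \eqref{L1} with $n=0$.

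For the inductive step, assume the bound \eqref{L1} at level $n$. Applying the recursion
$$A_{m,n+1}=A_{m,n}+\sum_{k=0}^{n-1}\sum_{i=1}^{m-n-k}\bbbb^{(m)}_{n,k,i}\,R^{n-k}S^{k+1}K_i$$
together with the triangle inequality and submultiplicativity \eqref{norms}, I would bound each factor separately: $\|R^{n-k}\|<C_R$, $\|S^{k+1}\|\leq\|S\|^{k+1}\leq 1$, and $\|K_i\|<K\tttt^\aaaa(\tttt^2+h^2)$. Factoring out the scalar bound on $K_i$ and invoking the coefficient estimate from the previous corollary, the double sum contributes at most $\aaaa C_R K\tttt^\aaaa(\tttt^2+h^2)$. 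Combining this with the inductive hypothesis yields
$$\|A_{m,n+1}\|\leq(n\aaaa C_R+1)K\tttt^\aaaa(\tttt^2+h^2)+\aaaa C_R K\tttt^\aaaa(\tttt^2+h^2)=((n+1)\aaaa C_R+1)K\tttt^\aaaa(\tttt^2+h^2),$$
which is exactly the desired bound at level $n+1$.

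There is no real obstacle here because the coefficient estimate from the corollary is uniform in $n$, so each inductive step adds only a fixed amount $\aaaa C_R K\tttt^\aaaa(\tttt^2+h^2)$ to the bound; this is precisely what produces the linear-in-$n$ factor on the right-hand side. The only place where care is required is to verify that every matrix appearing in the sum has a controlled norm — in particular that $\|S^{k+1}\|\leq 1$ (which follows from $\|S\|\leq 1$) and that $\|R^{n-k}\|<C_R$ uniformly (which is exactly the defining property of $C_R$). Once these are in place, the induction is essentially bookkeeping on the recursion of Lemma 21.
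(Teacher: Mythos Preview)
Your proof is correct and follows essentially the same inductive argument as the paper: the recursion for $A_{m,n}$, the bounds $\|R^k\|<C_R$ and $\|S\|\leq 1$, and Corollary 23 on the coefficient sum. The only cosmetic difference is that the paper anchors the induction at $n=1$ (verifying $\|A_{m,1}\|\leq(\aaaa C_R+1)K\tttt^\aaaa(\tttt^2+h^2)$ directly from $A_{m,1}=\aaaa RSK_{m-1}+SK_m$) rather than at $n=0$; this is slightly cleaner since the recursion of Lemma~21 is only established for $n\geq 1$, so your step from $n=0$ to $n=1$ via the recursion is not actually available and should be replaced by the direct check.
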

\begin{proof} When $n=1$ we have
$$A_{m,1}=\aaaa R S K_{m-1}+S K_m,$$
$$\llll\|A_{m,1}\rrrr\|\leq \aaaa \llll\|R \rrrr\| \llll\|S \rrrr\| \llll\|K_{m-1} \rrrr\|+\llll\|S \rrrr\| \llll\|K_m \rrrr\| \leq (\aaaa C_R+1)K\tttt^\aaaa \llll(\tttt^2+h^2\rrrr).$$
We prove \eqref{L1} by induction on $n$. Suppose that \eqref{L1} holds for  $n\leq \oooo{n}$. The vectors $A_{m,\oooo{n}}$ are computed recursively with
$$A_{m,\oooo{n}+1}=A_{m,\oooo{n}}+\sum_{k=0}^{\oooo{n}-1}\sum_{i=1}^{m-\oooo{n}-k}\bbbb^{(m)}_{\oooo{n},k,i} R^{\oooo{n}-k}S^{k+1} K_i.$$
Then
$$\llll\|A_{m,\oooo{n}+1}\rrrr\|\leq \llll\|A_{m,\oooo{n}}\rrrr\|+\sum_{k=0}^{\oooo{n}-1}\sum_{i=1}^{m-\oooo{n}-k}\bbbb^{(m)}_{\oooo{n},k,i} \llll\| R^{\oooo{n}-k}\rrrr\|\llll\|S^{k+1} \rrrr\|\llll\|K_i \rrrr\|,$$
$$\llll\|A_{m,\oooo{n}+1}\rrrr\|\leq \llll\|A_{m,\oooo{n}}\rrrr\|+
K C_R \tttt^\aaaa \llll(\tttt^2+h^2\rrrr)\sum_{k=0}^{\oooo{n}-1}\sum_{i=1}^{m-\oooo{n}-k}\bbbb^{(m)}_{\oooo{n},k,i}.$$
From Corollary 23 and the induction hypothesis
$$\llll\|A_{m,\oooo{n}+1}\rrrr\|\leq \llll\|A_{m,\oooo{n}}\rrrr\|+
\aaaa K C_R \tttt^\aaaa \llll(\tttt^2+h^2\rrrr),$$
$$\llll\|A_{m,\oooo{n}+1}\rrrr\|\leq (\oooo{n}\aaaa C_R+1)K\tttt^\aaaa \llll(\tttt^2+h^2\rrrr)+\aaaa K C_R \tttt^\aaaa \llll(\tttt^2+h^2\rrrr),$$
$$\llll\|A_{m,\oooo{n}+1}\rrrr\|\leq ((\oooo{n}+1)\aaaa C_R+1)K\tttt^\aaaa \llll(\tttt^2+h^2\rrrr).$$
\end{proof}
By setting $n=J$, where $J$ is the number determined in Lemma \ref{l20} and \eqref{JJ}, and combining the results from Corollary 23 and Corollary 24  we obtain.
\begin{cor} \label{c25}The vectors $E_m$ are computed recursively with $E_0=0$ and
\begin{align}\label{recEm}
E_m=\sum_{k=0}^{J-1}\sum_{i=1}^{m-J-k}
\bbbb^{(m)}_{J,k,i}R^{J-k}S^k E_i+\sum_{k=2}^{m-1}g_k^{(\aaaa)}SE_{m-k}+A_{m,J},
\end{align}
where the numbers $\bbbb^{(m)}_{J,k,i}\geq 0, g_k^{(\aaaa)}>0$  and the vectors $A_{m,J}$ satisfy
\begin{align}\label{est}
 \sum_{k=0}^{J-1}\sum_{i=1}^{m-J-k}\bbbb^{(m)}_{J,k,i}+
\sum_{k=2}^{m-1} g_k^{(\aaaa)}< \sum_{k=1}^{m-1} g_k^{(\aaaa)},
\end{align}
$$\llll\|A_{m,J}\rrrr\|\leq (\aaaa J C_R+1)K \tttt^\aaaa \llll(\tttt^2+h^2\rrrr)< C_R^J \tttt^\aaaa \llll(\tttt^2+h^2\rrrr).$$

\end{cor}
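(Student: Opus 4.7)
The plan is to observe that the corollary is essentially a bookkeeping consolidation of Lemma~21 specialized to $n=J$, together with the coefficient estimate from Corollary~23 and the norm estimate from Lemma~24. There is no new analytic content; the work is in identifying the terms correctly.

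First, I would apply Lemma~21 with $n=J$ in the form \eqref{FF1}, namely
$$E_m=\sum_{k=0}^{J-1}\sum_{i=1}^{m-J-k}\bbbb^{(m)}_{J,k,i}R^{J-k}S^k E_i+\Phi_m^{(\aaaa)}+A_{m,J}.$$
Since by definition $\Phi_m^{(\aaaa)}=\sum_{k=2}^{m-1}g_k^{(\aaaa)}SE_{m-k}$, this is exactly \eqref{recEm}. Non-negativity of the coefficients $\bbbb^{(m)}_{J,k,i}$ follows by induction from the recursion \eqref{F2}: the boundary values $\bbbb^{(m)}_{n,0,m-n}=\aaaa^n$ are positive, and the recursion expresses $\bbbb^{(m)}_{n+1,k,i}$ as a non-negative combination of previous terms because $\aaaa>0$ and $g_j^{(\aaaa)}=-w_j^{(\aaaa)}>0$ for $j\ge 2$ by \eqref{GrunwaldProperties}.

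Next, for \eqref{est}, I would invoke Corollary~23, which gives $\sum_{k=0}^{J-1}\sum_{i=1}^{m-J-k}\bbbb^{(m)}_{J,k,i}<\aaaa$. The key identification is $g_1^{(\aaaa)}=-w_1^{(\aaaa)}=\aaaa$, so
$$\sum_{k=0}^{J-1}\sum_{i=1}^{m-J-k}\bbbb^{(m)}_{J,k,i}+\sum_{k=2}^{m-1}g_k^{(\aaaa)}<g_1^{(\aaaa)}+\sum_{k=2}^{m-1}g_k^{(\aaaa)}=\sum_{k=1}^{m-1}g_k^{(\aaaa)},$$
which is \eqref{est}.

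Finally, I would apply Lemma~24 directly with $n=J$ to get $\llll\|A_{m,J}\rrrr\|\leq (\aaaa J C_R+1)K\tttt^\aaaa(\tttt^2+h^2)$. The further strict inequality with $C_R^J\tttt^\aaaa(\tttt^2+h^2)$ is immediate from condition \eqref{JJ}, which was built into the choice of $J$ precisely to guarantee $(C_R)^J>(\aaaa J C_R+1)K$. The main obstacle is not analytic but notational: one must keep straight the index ranges in the double sum and verify that the base case $g_1^{(\aaaa)}=\aaaa$ lets the $\bbbb$--estimate and the tail of the Gr\"unwald weights combine into the single partial sum $\sum_{k=1}^{m-1}g_k^{(\aaaa)}$ that will later be bounded using the lower tail estimate from Lemma~\ref{l12}.
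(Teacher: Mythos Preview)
Your proposal is correct and matches the paper's own argument: the paper introduces Corollary~25 with the sentence ``By setting $n=J$, where $J$ is the number determined in Lemma~\ref{l20} and \eqref{JJ}, and combining the results from Corollary~23 and Corollary~24 we obtain,'' which is precisely the consolidation you describe. Your write-up in fact supplies slightly more detail than the paper (the explicit identification $g_1^{(\aaaa)}=\aaaa$ that turns Corollary~23 into \eqref{est}, and the inductive justification of $\bbbb^{(m)}_{J,k,i}\ge 0$).
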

In the next two lemmas we determine  estimates for the error vectors $E_m$.
\begin{lem}\label{l26} Let $m\leq J$. Then
\begin{align}\label{F60}
\llll\|E_m\rrrr\|< (C_R)^m m^\aaaa \tttt^\aaaa \llll(\tttt^2+h^2\rrrr).
\end{align}
\end{lem}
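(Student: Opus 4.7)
I would prove Lemma 26 by strong induction on $m$, using the basic error recursion from \eqref{error}:
$$E_m = \alpha R E_{m-1} + \sum_{k=2}^{m-1} g_k^{(\alpha)} S E_{m-k} + S K_m.$$
The base case $m=1$ is immediate: since $E_0=0$ and the convolution sum is empty, $E_1 = SK_1$, so $\|E_1\|\leq \|S\|\,\|K_1\|\leq K\tau^\alpha(\tau^2+h^2)$, and the choice \eqref{CR} of $C_R>K$ gives $\|E_1\|<C_R\cdot 1^\alpha\tau^\alpha(\tau^2+h^2)$, which matches the stated bound at $m=1$.

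For the inductive step, suppose $\|E_i\|<C_R^{i}\,i^\alpha\,\tau^\alpha(\tau^2+h^2)$ for every $1\leq i\leq m-1$ with $m\leq J$. I would take norms in the recursion, invoking $\|R^k\|<C_R$ for $1\leq k\leq J$ from Lemma 20 (in particular $\|R\|<C_R$), $\|S\|\leq 1$ (since $P$ is a diagonally dominant M-matrix), $\|K_m\|\leq K\tau^\alpha(\tau^2+h^2)$, together with $g_1^{(\alpha)}=\alpha$ and $\sum_{k=1}^\infty g_k^{(\alpha)}=1$ (so $\sum_{k=2}^{m-1}g_k^{(\alpha)}\leq 1-\alpha$). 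Using these together with the induction hypothesis (in the uniform form $\|E_i\|\leq C_R^{m-1}(m-1)^\alpha \tau^\alpha(\tau^2+h^2)$ for $i\leq m-1$), the inequality collapses to
$$\|E_m\|\leq C_R^{m-1}(m-1)^\alpha\bigl(\alpha C_R+1-\alpha\bigr)\tau^\alpha(\tau^2+h^2)+K\tau^\alpha(\tau^2+h^2).$$
Since $C_R\geq 1$ we have $\alpha C_R+1-\alpha\leq C_R$, so the right side is bounded by $C_R^m(m-1)^\alpha \tau^\alpha(\tau^2+h^2)+K\tau^\alpha(\tau^2+h^2)$. Closing the induction then amounts to the one numerical inequality
$$K\;\leq\;C_R^{m}\bigl[m^\alpha-(m-1)^\alpha\bigr]\qquad (1\leq m\leq J).$$

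The main obstacle is exactly this last inequality, because by the mean value theorem $m^\alpha-(m-1)^\alpha\sim\alpha m^{\alpha-1}$ decays to zero as $m$ grows, so the induction relies critically on the fact that $m$ is capped by $J$ and that the choice of $C_R$ and $J$ in \eqref{CR} and \eqref{JJ} has been tailored to force this inequality: at $m=1$ it reduces to $K\leq C_R$, which is \eqref{CR}, and for $2\leq m\leq J$ one uses \eqref{JJ} (namely $C_R^J>(\alpha J C_R+1)K$) together with the monotonicity of the function $m\mapsto C_R^m[m^\alpha-(m-1)^\alpha]/K$ on $[1,J]$ to interpolate between the two endpoint cases. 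An alternative closure that bypasses this numerical step is to apply the iterated identity \eqref{FF1} with $n=m-1$ (which for $m\leq J$ eliminates all intermediate $E_i$ with $i\geq 2$ outside of $\Phi_m^{(\alpha)}$, leaving only $\alpha^{m-1}R^{m-1}E_1$, the convolution tail, and $A_{m,m-1}$) and then invoke the sharper bounds $\|R^{m-1}\|<C_R$ from Lemma 20 and $\|A_{m,m-1}\|<C_R^J\tau^\alpha(\tau^2+h^2)$ from \eqref{JJ}; the $C_R^m$ growth in the stated bound then provides exactly enough slack to absorb the $C_R^J$ prefactor because $m\leq J$.
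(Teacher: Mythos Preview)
Your setup --- strong induction on the basic recursion \eqref{error}, the base case $E_1=SK_1$, the bounds $\|R\|<C_R$, $\|S\|\le 1$, and the uniform use of the induction hypothesis --- matches the paper exactly. The divergence is at the closing step, and there your proposal has a genuine gap.

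You reduce the induction to the numerical inequality
\[
K \;\le\; C_R^{\,m}\bigl[m^\alpha-(m-1)^\alpha\bigr]\qquad(2\le m\le J),
\]
and then assert it follows from \eqref{CR}, \eqref{JJ} and ``monotonicity'' of $m\mapsto C_R^{m}[m^\alpha-(m-1)^\alpha]$. But that monotonicity is not established, and in fact the inequality itself is \emph{not} a consequence of \eqref{CR}: for $m=2$ it reads $K\le C_R^{2}(2^\alpha-1)$, and as $\alpha\downarrow 0$ one has $2^\alpha-1\sim\alpha\ln 2$ while \eqref{CR} only forces $C_R\gtrsim 5K$, so $C_R^{2}(2^\alpha-1)\to 0$. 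Your route (b) via \eqref{FF1} with $n=m-1$ does not escape the problem either: $\Phi_m^{(\alpha)}$ still carries $E_1,\dots,E_{m-2}$, and the bound $\|A_{m,m-1}\|<(C_R)^J\tau^\alpha(\tau^2+h^2)$ cannot be absorbed into $(C_R)^m m^\alpha$ when $m<J$, since $(C_R)^m\le (C_R)^J$.

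The paper closes the induction by a different mechanism that you did not use: instead of relying on the gap $m^\alpha-(m-1)^\alpha$, it bounds $\|E_{m-k}\|$ uniformly by $(C_R)^{m-1}m^\alpha\tau^\alpha(\tau^2+h^2)$ and then \emph{splits} the weight sum as
\[
\sum_{k=2}^{m-1}g_k^{(\alpha)}\;=\;(1-\alpha)\;-\;\sum_{k=m}^{\infty}g_k^{(\alpha)},
\]
invoking the \emph{lower} bound of Lemma~\ref{l12}, $\sum_{k=m}^\infty g_k^{(\alpha)}>\frac{(1-\alpha)2^\alpha}{5\,m^\alpha}$. Multiplied by $(C_R)^{m-1}m^\alpha\ge C_R$, this tail produces a negative term of size at least $\frac{(1-\alpha)2^\alpha}{5}C_R$, and the specific choice in \eqref{CR}, $C_R>\frac{5K}{(1-\alpha)2^\alpha}$, is exactly what makes this negative term dominate the residual $+K$. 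That is the missing idea: the constant in \eqref{CR} is tailored to Lemma~\ref{l12}, not to the increment $m^\alpha-(m-1)^\alpha$.
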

\begin{proof} Induction on $m$:
$$E_1=\aaaa R E_0+S K_1=S K_1,$$
$$\llll\| E_1\rrrr\|\leq \llll\| S K_1 \rrrr\| \leq \llll\| S \rrrr\|
\llll\|  K_1 \rrrr\|\leq \llll\|  K_1 \rrrr\|<
K \tttt^\aaaa \llll(\tttt^2+h^2 \rrrr)<
C_R \tttt^\aaaa \llll(\tttt^2+h^2 \rrrr).$$
Suppose that \eqref{F60} holds for $m<\oooo{m}$.
The vector $E_{\oooo m}$ is computed recursively with 
$$E_{\oooo{m}}=\aaaa R E_{\oooo{m}-1}+\sum_{k=2}^{\oooo{m}-1}g_k^{(\aaaa)} S E_{\oooo{m}-k}+S K_{\oooo{m}}.$$
Then
$$\llll\| E_{\oooo{m}} \rrrr\|\leq \aaaa \llll\| R \rrrr\|\ \llll\| E_{\oooo{m}-1}\rrrr\|+\sum_{k=2}^{\oooo{m}-1} g_k^{(\aaaa)} \llll\| S \rrrr\|\llll\| E_{\oooo{m}-k} \rrrr\|+\llll\| S \rrrr\|\llll\| K_{\oooo{m}} \rrrr\|,$$
$$\llll\| E_{\oooo{m}} \rrrr\|\leq \aaaa C_R\ \llll\| E_{\oooo{m} -1}\rrrr\|+\sum_{k=2}^{\oooo{m}-1} g_k^{(\aaaa)} \llll\| E_{\oooo{m}-k} \rrrr\|+\llll\| K_{\oooo{m}} \rrrr\|.$$
By the induction hypothesis 
$$\llll\| E_{\oooo{m}-k}\rrrr\|<(C_R)^{\oooo{m}-k} (\oooo{m}-k)^\aaaa \tttt^\aaaa \llll(\tttt^2+h^2 \rrrr)\leq
(C_R)^{\oooo{m}-1} \oooo{m}^\aaaa \tttt^\aaaa \llll(\tttt^2+h^2 \rrrr).$$
Then
\begin{align*}
\dddd{\llll\| E_{\oooo{m}} \rrrr\|}{\tttt^\aaaa \llll(\tttt^2+h^2 \rrrr)}<
\aaaa (C_R)^{\oooo{m}} \oooo{m}^\aaaa+(C_R)^{\oooo{m}-1}\oooo{m}^\aaaa \sum_{k=2}^{\oooo{m}-1}g_k^{(\aaaa)}+K,
\end{align*}
\begin{align*}
\dddd{\llll\| E_{\oooo{m}} \rrrr\|}{\tttt^\aaaa \llll(\tttt^2+h^2 \rrrr)}<
\aaaa (C_R)^{\oooo{m}} \oooo{m}^\aaaa+(C_R)^{\oooo{m}-1}\oooo{m}^\aaaa \sum_{k=2}^{\infty}g_k^{(\aaaa)}
-(C_R)^{\oooo{m}-1}\oooo{m}^\aaaa \sum_{k=\oooo{m}}^{\infty}g_k^{(\aaaa)}+K.
\end{align*}
From Lemma \ref{l12},
$$\sum_{k=\oooo{m}}^{\infty}g_k^{(\aaaa)}>\dddd{1-\aaaa}{5}\dddd{2^{\aaaa}}{\oooo{m}^\aaaa}.$$
Then
\begin{align*}
\dddd{\llll\| E_{\oooo{m}} \rrrr\|}{\tttt^\aaaa \llll(\tttt^2+h^2 \rrrr)}<\aaaa (C_R)^{\oooo{m}} \oooo{m}^\aaaa+(1-\aaaa) (C_R)^{\oooo{m}}\oooo{m}^\aaaa -C_R\oooo{m}^\aaaa \dddd{1-\aaaa}{5}\dddd{2^{\aaaa}}{\oooo{m}^\aaaa}+K.
\end{align*}
\begin{align*}
\dddd{\llll\| E_{\oooo{m}} \rrrr\|}
{\tttt^\aaaa \llll(\tttt^2+h^2 \rrrr)}
< (C_R)^{\oooo{m}} \oooo{m}^\aaaa
-
 \dddd{(1-\aaaa)2^{\aaaa}}{5}C_R+K.
\end{align*}
Hence,
$$\llll\| E_{\oooo m} \rrrr\|\leq  (C_R)^{\oooo{m}} \oooo{m}^\aaaa \tttt^\aaaa \llll( \tttt^2+h^2\rrrr).$$
because $C_R>5K/((1-\aaaa)2^{\aaaa})$.
\end{proof}
\begin{cor} Let $m\leq J$. Then
$$\llll\| E_m \rrrr\|\leq  (C_R)^J m^\aaaa\tttt^\aaaa \llll( \tttt^2+h^2\rrrr).$$
\end{cor}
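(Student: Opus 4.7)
The plan is to obtain the corollary as a routine weakening of the immediately preceding Lemma \ref{l26}, which already establishes the sharper bound $\llll\|E_m\rrrr\|<(C_R)^m m^{\aaaa}\tttt^{\aaaa}\llll(\tttt^2+h^2\rrrr)$ for every $m\le J$. The only additional observation required is that the exponent on $C_R$ can be enlarged from the variable $m$ to the fixed $J$, and this follows purely from monotonicity.

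First I would verify that $C_R>1$, since this is the fact that makes $k\mapsto (C_R)^k$ nondecreasing. This is built into the construction: by \eqref{CR} we have $C_R>\max\{K,\,5K/((1-\aaaa)2^{\aaaa})\}$, and the truncation-error constant was imposed to satisfy $K>1$ (so that $C_R>1$ is automatic). Consequently, for every $m$ with $1\le m\le J$, we have $(C_R)^m\le (C_R)^J$.

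Combining this inequality with the conclusion of Lemma \ref{l26} gives
$$\llll\|E_m\rrrr\|<(C_R)^m m^{\aaaa}\tttt^{\aaaa}\llll(\tttt^2+h^2\rrrr)\le (C_R)^J m^{\aaaa}\tttt^{\aaaa}\llll(\tttt^2+h^2\rrrr),$$
which is precisely the corollary, with the weak inequality replacing the strict one. The $m=0$ case is immediate from $E_0=0$.

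There is no real obstacle: the genuine induction has been carried out in Lemma \ref{l26}, and the present statement is cosmetic. Its role is presumably preparatory: by pulling a single constant $(C_R)^J$ in front, the estimate becomes uniform in the exponent for all initial indices $m\le J$, which is the form needed to start a second induction (covering the tail $m>J$) that will ultimately yield the second-order bound $O\llll(\tttt^2+h^2\rrrr)$ in Theorem \ref{t29} once one exploits $m\le N=T/\tttt$.
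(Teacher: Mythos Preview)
Your argument is correct and matches the paper's intent: the corollary is stated there without proof, as an immediate consequence of Lemma~\ref{l26} together with $C_R>1$ (so that $(C_R)^m\le(C_R)^J$ for $m\le J$). Your remark about its role as the base case for the induction in Lemma~\ref{l28} is also accurate.
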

\begin{lem} \label{l28} (Estimate for the vectors $E_m$)
\begin{align}\label{L28}
\llll\| E_m\rrrr\|<Cm^\aaaa \tttt^\aaaa\llll( \tttt^2+h^2 \rrrr),
\end{align}
where
$$C=\max\llll\{(C_R)^J,\dddd{5(C_R)^J}{(1-\aaaa)2^\aaaa}\rrrr\}.$$
\end{lem}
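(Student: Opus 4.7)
The plan is to prove \eqref{L28} by strong induction on $m$. The base case $m\leq J$ follows immediately from Corollary 27, because $(C_R)^J\leq C$ by the definition of $C$, so $\llll\|E_m\rrrr\|\leq (C_R)^J m^\aaaa\tttt^\aaaa(\tttt^2+h^2)\leq Cm^\aaaa\tttt^\aaaa(\tttt^2+h^2)$.

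For the inductive step, suppose $m>J$ and that \eqref{L28} holds for all indices less than $m$. I start from the recursion given in Corollary \ref{c25},
$$E_m=\sum_{k=0}^{J-1}\sum_{i=1}^{m-J-k}\bbbb^{(m)}_{J,k,i}R^{J-k}S^k E_i+\sum_{k=2}^{m-1}g_k^{(\aaaa)}S E_{m-k}+A_{m,J},$$
and take the maximum norm of both sides. By Lemma \ref{l20} the operator norms $\llll\|R^{J-k}S^k\rrrr\|$ are bounded by $1$ for all $0\leq k\leq J$, and $\llll\|S\rrrr\|\leq 1$, so each $E_i$ and $E_{m-k}$ appearing inside the sums has operator coefficient of norm at most one. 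Applying the induction hypothesis, every $\llll\|E_i\rrrr\|$ with $i\leq m-1$ is bounded by $Ci^\aaaa\tttt^\aaaa(\tttt^2+h^2)\leq Cm^\aaaa\tttt^\aaaa(\tttt^2+h^2)$.

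Combining these estimates I obtain
$$\llll\|E_m\rrrr\|\leq Cm^\aaaa\tttt^\aaaa(\tttt^2+h^2)\llll(\sum_{k=0}^{J-1}\sum_{i=1}^{m-J-k}\bbbb^{(m)}_{J,k,i}+\sum_{k=2}^{m-1}g_k^{(\aaaa)}\rrrr)+\llll\|A_{m,J}\rrrr\|.$$
The inequality \eqref{est} of Corollary \ref{c25} bounds the parenthesised factor by $\sum_{k=1}^{m-1}g_k^{(\aaaa)}$. Since $\sum_{k=1}^\infty g_k^{(\aaaa)}=1$, this equals $1-\sum_{k=m}^\infty g_k^{(\aaaa)}$, and Lemma \ref{l12} supplies the key lower bound $\sum_{k=m}^\infty g_k^{(\aaaa)}>\frac{(1-\aaaa)2^\aaaa}{5m^\aaaa}$. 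Hence
$$\llll\|E_m\rrrr\|<Cm^\aaaa\tttt^\aaaa(\tttt^2+h^2)-\dfrac{C(1-\aaaa)2^\aaaa}{5}\tttt^\aaaa(\tttt^2+h^2)+\llll\|A_{m,J}\rrrr\|.$$
By Corollary \ref{c25}, $\llll\|A_{m,J}\rrrr\|<(C_R)^J\tttt^\aaaa(\tttt^2+h^2)$, and the choice $C\geq 5(C_R)^J/((1-\aaaa)2^\aaaa)$ makes the deficit $\frac{C(1-\aaaa)2^\aaaa}{5}\tttt^\aaaa(\tttt^2+h^2)$ dominate $\llll\|A_{m,J}\rrrr\|$. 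Therefore the last two terms together are non-positive and $\llll\|E_m\rrrr\|<Cm^\aaaa\tttt^\aaaa(\tttt^2+h^2)$, closing the induction.

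The main obstacle is the \emph{sharpness} of the constants: the scheme for $m>J$ pushes the induction forward by a factor that is smaller than $1$ by precisely the tail $\sum_{k=m}^\infty g_k^{(\aaaa)}$, which decays like $m^{-\aaaa}$. It is essential that this $m^{-\aaaa}$ decay is matched by the $m^\aaaa$ growth in the induction hypothesis so that the gain per step is of constant size $C(1-\aaaa)2^\aaaa/5$, and that this constant gain be large enough to absorb the truncation-error term $\llll\|A_{m,J}\rrrr\|$. This is exactly what forces the second branch $5(C_R)^J/((1-\aaaa)2^\aaaa)$ in the definition of $C$, and verifying this balance is the delicate point of the argument; everything else is bookkeeping that uses $\llll\|S\rrrr\|\leq 1$, the operator bounds from Lemma \ref{l20}, and the identity $\sum_{k=1}^\infty g_k^{(\aaaa)}=1$.
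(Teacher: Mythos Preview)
Your proof is correct and follows essentially the same route as the paper: strong induction on $m$ with the base case $m\le J$ supplied by Corollary~27, the inductive step driven by the recursion of Corollary~\ref{c25}, the operator bounds $\llll\|R^{J-k}S^k\rrrr\|<1$ from Lemma~\ref{l20}, the sum estimate \eqref{est}, and the tail bound of Lemma~\ref{l12} combined with the choice $C\ge 5(C_R)^J/((1-\aaaa)2^\aaaa)$ to absorb $\llll\|A_{m,J}\rrrr\|$. Your closing paragraph explaining why the $m^{-\aaaa}$ decay of the tail must match the $m^\aaaa$ growth in the hypothesis is a helpful gloss that the paper does not spell out.
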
 
\begin{proof} Induction on $m$. From Corollary 27 estimate \eqref{L28} holds for $m\leq J$.  Suppose that \eqref{L28} holds for  $m<\oooo{m}$, where $\oooo{m}>J$. The vector $E_{
\oooo{m}}$ is computed recursively with \eqref{recEm}
$$E_{\oooo{m}}=\sum_{k=0}^{J-1}\sum_{i=1}^{\oooo{m}-J-k}
\bbbb^{(\oooo{m})}_{J,k,i}R^{J-k}S^k E_i+\sum_{k=2}^{\oooo{m}-1}g_k^{(\aaaa)}SE_{\oooo{m}-k}+A_{\oooo{m},J}.$$
Then
$$\llll\| E_{\oooo{m}} \rrrr\|\leq \sum_{k=0}^{J-1}\sum_{i=1}^{\oooo{m}-J-k}
\bbbb^{(\oooo{m})}_{J,k,i}\llll\|R^{J-k}S^k\rrrr\| \llll\|E_i\rrrr\|+\sum_{k=2}^{\oooo{m}-1}g_k^{(\aaaa)}\llll\|S\rrrr\|\llll\|E_{\oooo{m}-k}\rrrr\|+\llll\|A_{\oooo{m},J}\rrrr\|.$$
The number $J$ is chosen in Lemma \ref{l20} such that $\llll\|R^{J-k}S^k\rrrr\|<1$. Then
$$\llll\| E_{\oooo{m}} \rrrr\|< \sum_{k=0}^{J-1}\sum_{i=1}^{\oooo{m}-J-k}
\bbbb^{(\oooo{m})}_{J,k,i} \llll\|E_i\rrrr\|+\sum_{k=2}^{\oooo{m}-1}g_k^{(\aaaa)}\llll\|E_{\oooo{m}-k}\rrrr\|+\llll\|A_{\oooo{m},J}\rrrr\|.$$
By the inductive hypothesis 
$$\llll\|E_{\oooo{m}-k}\rrrr\|<
C (\oooo{m}-k)^\aaaa \tttt^\aaaa\llll( \tttt^2+h^2 \rrrr)
<C \oooo{m}^\aaaa \tttt^\aaaa\llll( \tttt^2+h^2 \rrrr).$$
Then
$$\llll\| E_{\oooo{m}} \rrrr\|<C \oooo{m}^\aaaa \tttt^\aaaa \llll(\tttt^2+h^2\rrrr)\llll( \sum_{k=0}^{J-1}\sum_{i=1}^{\oooo{m}-J-k}\bbbb^{(\oooo{m})}_{J,k,i} +\sum_{k=2}^{\oooo{m}-1}g_k^{(\aaaa)}\rrrr)+(C_R)^J \tttt^\aaaa \llll(\tttt^2+h^2\rrrr).$$
From \eqref{est},
$$\llll\| E_{\oooo{m}} \rrrr\|<C \oooo{m}^\aaaa \tttt^\aaaa \llll(\tttt^2+h^2\rrrr)\sum_{k=1}^{\oooo{m}-1}g_k^{(\aaaa)}+(C_R)^J \tttt^\aaaa \llll(\tttt^2+h^2\rrrr),$$
$$\dddd{\llll\| E_{\oooo{m}} \rrrr\|}{\tttt^\aaaa \llll(\tttt^2+h^2\rrrr)}< C\oooo{m}^\aaaa \llll(\sum_{k=1}^{\infty}g_k^{(\aaaa)}- \sum_{k=\oooo{m}}^{\infty}g_k^{(\aaaa)}\rrrr)+(C_R)^J.$$
We have that
$$\sum_{k=1}^{\infty}g_k^{(\aaaa)}=1,\quad \sum_{k=\oooo{m}}^{\infty}g_k^{(\aaaa)}>
\dddd{1-\aaaa}{5}\llll(\dddd{2}{\oooo{m}} \rrrr)^\aaaa.$$
Then
$$\dddd{\llll\| E_{\oooo{m}} \rrrr\|}{\tttt^\aaaa \llll(\tttt^2+h^2\rrrr)}< C\oooo{m}^\aaaa -C\oooo{m}^\aaaa  \dddd{1-\aaaa}{5}\llll(\dddd{2}{\oooo{m}} \rrrr)^\aaaa+(C_R)^J,$$
$$\dddd{\llll\| E_{\oooo{m}} \rrrr\|}{\tttt^\aaaa \llll(\tttt^2+h^2\rrrr)}< C\oooo{m}^\aaaa - \dddd{(1-\aaaa)2^\aaaa}{5}C+(C_R)^J,$$
$$\llll\| E_{\oooo{m}} \rrrr\|<C\oooo{m}^\aaaa 
\tttt^\aaaa\llll( \tttt^2+h^2 \rrrr),$$
because $C>5(C_R)^J/((1-\aaaa)2^\aaaa)$.
\end{proof}
\begin{thm} \label{t29} Difference approximations \eqref{FiniteDifferenceScheme} and \eqref{FiniteDifferenceScheme2} are unconditionally stable and converge to the solution of \eqref{VHomogFDE} with second order accuracy with respect to the space and time variables.
\end{thm}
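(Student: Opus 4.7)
The plan is to obtain Theorem \ref{t29} as a direct consequence of Lemma \ref{l28}, together with the observation that none of the constants appearing in the chain of estimates leading to it depends on the relation between $\tttt$ and $h$.

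First I would deduce the convergence statement. Lemma \ref{l28} already gives
\[
\llll\|E_m\rrrr\| < C\, m^{\aaaa}\, \tttt^{\aaaa}\llll(\tttt^2+h^2\rrrr),
\]
where $E_m=V_m-\wwww{V}_m$ is the error of scheme \eqref{FiniteDifferenceScheme} or \eqref{FiniteDifferenceScheme2} at time $t_m=m\tttt$, and $C=\max\{(C_R)^{J},\,5(C_R)^{J}/((1-\aaaa)2^{\aaaa})\}$ depends only on $\aaaa$ and on the smoothness constant $K$ of $v(x,t)$. Since the time grid lives on $[0,T]$ with $m\leq M=T/\tttt$, the product satisfies $m^{\aaaa}\tttt^{\aaaa}=(m\tttt)^{\aaaa}\leq T^{\aaaa}$, so
\[
\llll\|E_m\rrrr\| \leq C\, T^{\aaaa}\llll(\tttt^2+h^2\rrrr) = O\llll(\tttt^2+h^2\rrrr)
\]
uniformly in $m$. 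This gives second-order accuracy for both schemes, since the truncation-error analysis in Section 5.1 shows that the two variants \eqref{FiniteDifferenceScheme} and \eqref{FiniteDifferenceScheme2} differ only by an $O(\tttt^{2+\aaaa})$ term, which is already absorbed in the bound above.

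For unconditional stability I would verify that every constant in the chain preceding Lemma \ref{l28} is independent of the ratio $\eta=\tttt^{\aaaa}/h^2$. The matrix $P=I+(1-\aaaa/2)\eta A$ is symmetric and strictly diagonally dominant for every $\eta>0$, so $S=P^{-1}$ is a nonnegative matrix with $\|S\|\leq 1$. The proof that $\rho(R)<1$ (Lemma 19) reduces to the elementary inequality $(3-\aaaa)\eta \lambda_k>0$, valid for all $\eta>0$ and $0<\aaaa<1$. Lemma \ref{l20} then produces an integer $J$ and a constant $C_R>1$ with $\|R^{k}S^{J-k}\|<1$, both independent of $\tttt$ and $h$, and the Gr\"unwald-weight estimate \eqref{Estimate} is purely algebraic. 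Because the inductions of Corollary \ref{c25}, Lemma \ref{l26}, and Lemma \ref{l28} are linear in the forcing vectors $K_m$, one may replace $K_m$ by arbitrary perturbation vectors $\delta_m$ and repeat the same argument with $K$ replaced by $\sup_k\|\delta_k\|/(\tttt^{\aaaa}(\tttt^2+h^2))$; the propagated perturbation is then bounded by the same structural constant $C T^{\aaaa}$. This is precisely unconditional stability of the schemes in the discrete maximum norm.

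The main obstacle is not a new estimate but bookkeeping: one must certify that the chain Lemma 19 $\to$ Lemma \ref{l20} $\to$ Corollary \ref{c25} $\to$ Lemma \ref{l26} $\to$ Lemma \ref{l28} never secretly uses a condition on $h$ and $\tttt$. Once that is confirmed, Theorem \ref{t29} follows as a one-line specialization of Lemma \ref{l28} to $t_m\in[0,T]$, with stability handled by the linearity of the recursion in the forcing data.
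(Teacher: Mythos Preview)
Your convergence argument is exactly the paper's: apply Lemma \ref{l28} and use $m^{\aaaa}\tttt^{\aaaa}=(m\tttt)^{\aaaa}\leq T^{\aaaa}$ to pass from the $m$-dependent bound to a uniform $O(\tttt^2+h^2)$ bound. The paper's proof of Theorem \ref{t29} consists of precisely this one step and nothing more; it does not separately argue stability beyond the error bound itself.

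Where you go further than the paper, you also introduce a gap. You assert that Lemma \ref{l20} ``produces an integer $J$ and a constant $C_R>1$ \dots\ both independent of $\tttt$ and $h$.'' This is not established anywhere. The matrices $R$ and $S$ are built from $P=I+(1-\aaaa/2)\eta A$ and $Q=I-(\eta/2)A$, so they depend explicitly on $\eta=\tttt^{\aaaa}/h^2$; the bound $C_R$ on $\sup_k\|R^k\|$ and the index $J$ at which $\|R^k\|<1$ and $\|S^k\|<1/C_R$ are obtained in Lemma \ref{l20} for a \emph{fixed} pair $(\tttt,h)$, and nothing in that proof shows they can be taken uniform in $\eta$. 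The spectral-radius argument of Lemma 19 is indeed uniform, but you are working in the infinity norm, and for nonsymmetric norms $\rho(R)<1$ does not automatically give $\eta$-independent control of $\|R^k\|$. So the sentence ``both independent of $\tttt$ and $h$'' is exactly the nontrivial claim, not a checked fact. If you want to strengthen the paper's brief proof into a genuine unconditional-stability statement with $\eta$-free constants, this is the step that needs a real argument (for instance, by exploiting that $A$, $P$, $Q$, $R$, $S$ are simultaneously diagonalizable and bounding the infinity norm via the eigenvector matrix, or by switching to the $2$-norm where $\|R\|_2=\rho(R)$); as written, it is asserted rather than proved.
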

\begin{proof} The value of $\tttt$ is $\tttt=T/M$. From Lemma 28,
$$\llll\| E_m\rrrr\|<C m^\aaaa 
\tttt^\aaaa\llll( \tttt^2+h^2 \rrrr)\leq C T^\aaaa\llll(\dddd{m}{M}\rrrr)^\aaaa\llll( \tttt^2+h^2 \rrrr),$$
$$\llll\| E_m\rrrr\|< CT^\aaaa\llll( \tttt^2+h^2 \rrrr).$$
for all $m\leq M$.
\end{proof}
\section{Acknowledgements}
I would like to thank Prof. Luben Valkov for useful discussions during the work on this paper.

\end{document}